\documentclass[pdflatex,sn-mathphys-ay]{sn-jnl}

\usepackage{graphicx}%
\usepackage{multirow}%
\usepackage{amsmath,amssymb,amsfonts}%
\usepackage{amsthm}%
\usepackage{mathrsfs}%
\usepackage[title]{appendix}%
\usepackage{xcolor}%
\usepackage{textcomp}%
\usepackage{manyfoot}%
\usepackage{booktabs}%
\usepackage{algorithm}%
\usepackage{algorithmicx}%
\usepackage{algpseudocode}%
\usepackage{listings}%
\usepackage{lscape}%
\usepackage{nomencl}
\usepackage[printonlyused,withpage]{acronym}%
\makenomenclature
\usepackage{mathtools}%
\usepackage{dsfont}%
\usepackage[capitalise]{cleveref}%
\usepackage{array}%
\usepackage{cancel}

\newcommand{\R}{\mathrm{R}}
\newcolumntype{?}{!{\vrule width 0.8pt}}

\newcommand{\up}[1]{$^{\mathrm{#1}}$}
\newcommand{\I}{\mathrm{I}}
\newcommand{\J}{\mathcal{J}}
\newcommand{\T}[1]{\mathbf{T}_{#1}}
\newcommand{\U}{\mathcal{U}}
\renewcommand{\S}[1]{\mathrm{S}_{#1}}
\newcommand{\V}{\mathcal{V}}
\renewcommand{\d}{\mathrm{d}}
\renewcommand{\t}{\mathrm{t}}
\newcommand{\y}{\mathrm{y}}
\newcommand{\hy}{\hat{\mathrm{y}}}
\newcommand{\Y}{\mathrm{Y}}

\newcommand{\mT}{\mathcal{T}}
\newcommand{\N}{\mathrm{N}}
\newcommand{\E}{\mathrm{E}}
\newcommand{\mE}{\mathcal{E}}
\newcommand{\mA}{\mathrm{A}}
\newcommand{\MA}{\mathsf{A}}
\newcommand{\MAp}{\mathsf{A}^\prime}
\newcommand{\sX}{\mathsf{X}}
\newcommand{\sx}{\mathsf{x}}
\newcommand{\sY}{\mathsf{Y}}
\newcommand{\sy}{\mathsf{y}}
\newcommand{\MB}{\mathsf{B}}
\newcommand{\e}{\mathrm{e}}
\newcommand{\me}{{e}}
\newcommand{\he}{\hat{{e}}}
\newcommand{\tol}{\mathsf{tol}}
\newcommand{\sfT}{\mathcal{T}}
\newcommand{\h}{\tau}
\newcommand{\f}{{f}}
\newcommand{\F}{\mathrm{F}}
\newcommand{\g}[1]{\gamma_{(#1\arrowvert n)}}
\newcommand{\Gp}[1]{\gamma^{\prime}_{(#1\arrowvert n)}}
\newcommand{\G}[1]{\Gamma_{(#1\arrowvert n)}}
\newcommand{\gf}[1]{\gamma_{(#1)}}
\newcommand{\Gf}[1]{\Gamma_{(#1)}}

\renewcommand{\r}[1]{r_{(#1\arrowvert n)}}
\newcommand{\eps}[1]{\varepsilon_{(#1\arrowvert n)}}
\newcommand{\Eps}[1]{\xi_{(#1\arrowvert n)}}
\renewcommand{\a}{\kappa}
\newcommand{\tnmh}{\t_{n-1/2}}
\newcommand{\ynmh}{\y_{n-1/2}}
\newcommand{\Tnmh}[1]{\mT_{n-1/2,#1}}
\newcommand{\Ynmh}[1]{\Y_{n-1/2,#1}}
\newcommand{\nf}[2]{\Phi^{{[#1]}}_{#2}}
\newcommand{\nfc}[2]{\Psi^{{[#1]}}_{#2}}
\renewcommand{\Im}{\mathrm{Im}}
\renewcommand{\Re}{\mathrm{Re}}
\newcommand{\eC}{\mathcal{C}}
\newcommand{\C}[1]{\mathrm{C}_{#1}}
\newcommand{\Mat}[2]{\mathsf{Mat}_{#1}\left(#2\right)}
\newcommand{\GL}[2]{\mathrm{GL}_{#1}\left(#2\right)}
\newcommand{\mi}{\mathrm{i}} 
\newcommand{\op}{\mathrm{p}}

\theoremstyle{thmstyleone}%
\newtheorem{theorem}{Theorem}
\newtheorem{proposition}[theorem]{Proposition}%
\newtheorem{lemma}[theorem]{Lemma}

\theoremstyle{thmstyletwo}%

\theoremstyle{thmstylethree}%
\newtheorem{definition}{Definition}%

\begin{document}

\title[Article Title]{Error estimation for numerical approximations of ODEs via composition techniques. Part II: BDF methods}


\author*[1]{\fnm{Ahmad} \sur{Deeb}}
\email{ahmad.deeb@ku.ac.ae}
\author[1]{\fnm{Denys} \sur{Dutykh}}
\email{denys.dutykh@ku.ac.ae}
\author[2]{\fnm{Maryam} \sur{Al Zohbi}}
\email{maryam.al-zohbi@univ-cotedazur.fr}

\affil*[1]{\orgdiv{Department of Mathematics}, \orgname{Khalifa University of Science and Technology}, \orgaddress{\city{Abu Dhabi}, \postcode{PO Box 127788}, \country{UAE}}}

\affil[2]{\orgdiv{LJAD}, \orgname{University Cote D'Azur}, \orgaddress{\city{Nice}, \postcode{06100}, \country{France}}}

\abstract{Integration of Ordinary Differential Equations (ODEs) using \ac{BDF} methods with $\op$ backwards steps achieves order $\op$ accuracy if specific conditions are met. This work extends the composition technique with complex coefficients to the implicit \ac{BDF} schemes, increasing the approximation order by one without additional backward points. The imaginary part of the composed flow provides an error estimate of order $\op+1$. Linear stability analysis reveals that the composed schemes break the Dahlquist barrier, achieving stability up to order eight. The computational performance of the composed flow outperforms \ac{BDF} schemes when using the same number of backward points, allowing for higher accuracy with lower CPU time. For non-uniform meshes, the ratio of consecutive time steps, which influences stability, appears as a parameter in the roots of algebraic equations relative to the composed flow. Having a complex root with a real positive part implies a lower bound to this ratio depending on the order. For example, the bound is $0.4506$ for order three and $0.6806$ for order four. Numerical tests demonstrate the effectiveness of this technique in improving the accuracy and stability compared to \ac{BDF} methods.}

\keywords{Backward Difference Formula \sep Composition technique \sep Stiff problems \sep Numerical integration \sep Error estimate}
\pacs[MSC Classification]{34E05 \sep 65L04 \sep 65L06 \sep(primary) 65L50\sep 65L70 (secondary)}

\maketitle

\section{Introduction}\label{sec1}

In this second part study, we are interested in providing a numerical solution and a local error estimate for the following Cauchy problem using the composition technique of \ac{BDF} schemes:
\begin{equation}
 \label{diff-system}
 \frac{\d\y}{\d\t} = \f(\t,\y), \quad \t \in \I = ]\t_0,\T{}[,\qquad \y(\t_0) = \y_0,
\end{equation}
with
\begin{equation}
\y:\left\rvert
 \begin{array}{ccc}
  \I\subset \mathds{R} &\rightarrow &\U \subset \mathds{R}^d\\
  \t &\mapsto &\y(\t),
 \end{array}
 \right.
\qquad
\f: \left\rvert
\begin{array}{ccc}
  \I \times \U&\rightarrow &\mathds{R}^d\\
  (\t,\y) &\mapsto &\f(\t,\y).
 \end{array}
 \right.
 \end{equation}
In real problems, the right-hand side $\f$ is defined for real arguments and returns real values. In this manuscript, we consider the cases where $\f$ could also be extended to complex subsets and defined in $\J\times \V$, where $\I\subset \J \subset \mathds{C}$ and $\U\subset \V\subset \mathds{C}^d$. The extension, if possible, is done naturally.

\subsection*{Numerical approximations via different classes of schemes}

The numerical integration of System \eqref{diff-system} has been extensively investigated \cite{butcher-2009}, and a wide range of numerical schemes has been proposed to approximate solutions for stiff and non-stiff problems \cite{book:hairer, book:hairer2}.
Classical numerical schemes are decomposed into two types: one-step methods such as, for instance, the versatile explicit fourth-order \ac{RK} scheme \cite{Runge_1895,butcher_1964}, and \ac{LMS} methods, such as the second-order \ac{BDF}2 resulting from approximating the first derivative by difference formulas. The topic of developing efficient and high-order numerical methods for time integration remains significant and of interest in current research \cite{Diaz-2021, Vu-2024, Kropielnicka-2024, comp1, comp2, comp3, comp4}.

The accuracy and stability of numerical methods are crucial for ensuring reliable numerical solutions \cite{axelsson-69}. Explicit \ac{RK} schemes with $s$ stages are easily available to produce numerical solutions for different orders of approximations \cite{butcher_1963,butcher_1996,verwer_1996}. The region of stability of these methods is small, imposing a small time step when approximating solutions of highly stiff problems. \ac{ERK} methods \cite{iserles_2008, Vermeire-23} have been developed to adapt the local time step and improve the performance of the simulation when using explicit schemes. The idea consists of having two approximations of the solution with different orders of approximation, allowing for an error estimate could be deduced. Bogacki and Shampine \cite{bogacki_shampine} developed a third-order \ac{ERK} method with four stages, whilst a fifth-order \ac{ERK} method was presented by Dormand and Prince \cite{dromand_prince}.
For more details on \ac{ERK} methods, refer to \cite{book:butcher, book:tomas}.
The \ac{IRK} methods are then developed to produce numerical schemes with larger regions of stability, such as the Lobatto methods \cite{Jay-15} that are based on the trapezoidal quadrature rule, the Radau \cite{radau-88,hairer-99} and Gau\ss-RK methods \cite[pp 34]{hairer2002geometric} which are collocation-type methods. Embedded techniques were also developed for collocation methods \cite{pia-17}.
Other types of numerical methods exist, such as \ac{ETD} methods \cite{Pope_1963,hochbruck2010exponential,cox_2002, Maset-21} that are based on integrating exactly, if possible, the linear part of the equation.
Other classes of numerical integrators are based on \ac{DSR} \cite{ahmad_bpl_2014,ahmad_comp_bpl_sfg_2015,ahmad_icnpaa_2016} such as \ac{BPL} that were applied to solving stiff and non-stiff problems \cite{DEEB_2022_bpl,ahmad_pgd_pade, deeb:stab-NS}.

Another class of numerical methods that has been developed for solving the Cauchy problem is the so-called \ac{LMS} schemes, as seen in \cite[Chapter 3]{book:hairer} and \cite{kirlinger-04,CiCP-35-1327}, such as the Adams-Bashforth and Milne-Simpson methods. These methods were developed to improve the stability of simulations for stiff problems \cite{book:hairer2,okounghae-12,xiao-14}. They require additional starting points to provide numerical approximations on subsequent layers. Dahlquist \cite{dahlquist-56,dahlquist-63} explored and advanced the stability and convergence of these classes based on the generating polynomials of corresponding \ac{LMS} methods.

\subsection*{Stability of \ac{BDF} schemes}

When having a uniform mesh, explicit \ac{BDF} schemes are stable up to the second order, while implicit ones are stable up to the sixth order \cite{hairer-83}. When applied to PDEs, the second-order implicit \ac{BDF} scheme was studied and used for solving parabolic-type problems. Bokanowski \cite{bokanowski-21} studied their stability, Wang \cite{wang-21} provided error estimation, Akrivis \cite{arkrivis-13,arkrivis-20} applied \ac{BDF} schemes to the non-linear cases, Xu \cite{Xu-23} provide a long error estimate for Delay differential equations, and Gragg and Stettar \cite{gragg-64} used one or more off-grid points to increase the order of approximations.

\ac{BDF} schemes were also developed for non-uniform meshes. Among others, Becker \cite{becker-98} proposed a variable time stepping technique for a parabolic problem.
Stability is altered in non-uniform meshes and has been studied in the case of linear diffusion equations using a class of \ac{DOC}\cite{Liao-20}. An upper bound on the ratio between two consecutive time steps is established to maintain stability.

\subsection*{Increasing the order by composition techniques}

Consider a situation where one has a basic one-step numerical scheme with low-order accuracy and wants to provide approximations with higher orders. Composition techniques enable us to construct new numerical schemes of higher order. It was introduced in several works: Yoshida \cite{yoshida-1990} used the composition to introduce symplectic integrators for \ac{ODE}s, Suzuki \cite{suzuki-1990} improved the precision of the approximation of solution for two-body problems using the composition of basic numerical flows and McLachlan \cite{Mc-1995} considered a new type of differential equation $\dot{y} = A+B$, where vector fields $A$ and $B$ can be integrated exactly. Then, their solutions were composed to produce a general solution to the initial problem. Blanes \emph{et al.} \cite{blanes-01,casas-2006,casas-2008} have developed numerical methods with high-order approximations by composing basic numerical integrators with the adjoint, which is defined by the inverse flow with a negative time step, or by composing several times second or fourth-order symmetric methods. Casas \emph{et al.} \cite{casas_2021_complex} have used the composition of one-step methods with complex coefficients to develop pseudo-symplectic and pseudo-symmetric methods. Complex coefficients were used first by Castella \emph{et al.} \cite{castella_2009} for solving parabolic equations, while Blanes \emph{et al.} \cite{blanes-10} employed them in splitting techniques for enhancing the order of approximations \cite{blanes-99}, as negative real coefficients were inevitable for composing numerical methods with order higher than two \cite{blanes-10}.

However, the composition of \ac{LMS} methods has been proposed in a cyclic way, where several schemes are composed recursively.
Hansen \cite{Hansen-69} established new predictor-corrector methods by the cyclic composition. Different works have been performed in the context of cyclic composition, see \emph{e.g.} \cite{donelson-71,bickart-73,cash-77,tendler-78}, to enhance the stability of numerical simulations for highly stiff differential equations. For example, a composition is obtained by computing first an approximation using the second-order Simpson rule, then marching in time using the third-order Adams-Moulton, and finalizing it with the Simpson rule again.

\subsection*{Error estimation and adaptivity}

Error estimation is a must in numerical simulation provided via one-step \cite{johnson-88,choi-96,soderlind-02,yan-21,jaradat-23,wang-23} or \ac{LMS} methods \cite{baron-17,soderlind-21,meng-23}.
It improves the stability and performance of simulations, as \ac{ATS} could be employed \cite{soderlind-06, CiCP-13-461}.
Having a numerical scheme of order $\op$ that produces an approximation $\y_n$, the exact local error $\me(\t_n) \coloneqq \|\y(t_n) - \y_n\| $ could be approximated by $\me_n \simeq \C{}\times \h_n^{\op+1}$,
where $\C{}$ is a positive constant to be found. Finding this error constant for every numerical scheme is of capital importance. Hairer \emph{et al.} \cite{book:hairer} presented a list of error constants for \ac{RK} and \ac{LMS} methods. Other works have also studied the local error for \ac{RK} \cite{khashin-14,hochbruck-18} and for \ac{LMS} methods \cite{lambert-90,soderlind-21}, and use this error for \ac{ATS} technique for solving the Cahn--Hilliard problem \cite{chen-19} and the Allen--Cahn problem \cite{Liao-20} using the second order \ac{BDF}.

Blanes \emph{et al.} \cite{blanes-19} established an error estimate of the approximation obtained by a composition technique of one-step methods when having real coefficients. This error estimate is used for adaptive mesh refinement. When it comes to \ac{LMS}, adaptivity affects the stability as mentioned above. Liao \cite{Liao-20} establishes that the ratio should be smaller than $3.561$ to maintain energy stability using \ac{BDF}2 and smaller than $2.414$ to satisfy the discrete maximum principle for the Allen-Cahn equations. Li \cite{Li-22} shows that the \ac{BDF}2 scheme is stable for arbitrary time grids, while for \ac{BDF}3, the ratio of adjacent time steps must be smaller than $2.553$. Liao extends the ratio results for \ac{BDF}2 and \ac{BDF}3 to a class of diffusion equations \cite{Liao-21, Liao-23}.

In recent works, Laadhari \emph{et al.} \cite{deeb:AML1,deeb:casson} have employed the composition technique with complex coefficients of (\textit{a}) the second order \ac{CN} and (\textit{b}) the implicit \ac{BDF}2, both tailored for time marching of the membranes' dynamic simulation of red blood cells in a quasi-Newtonian blood flow. The numerical outputs of this composition are complex values, whose real parts provide the fourth-order approximations for the \ac{CN} and the third-order when developed for the \ac{BDF}2. The imaginary parts provide the third-order error estimation for both resulting approximations,
enabling dynamical simulation via \ac{ATS} process. This technique of composition using complex coefficients
has been generalized in the first part of this work \cite{deeb:part1} to any one-step method.

\subsection*{Novelties}
In this manuscript, we extend the composition technique introduced for the implicit \ac{BDF}2 scheme in \cite{deeb:casson} to any \ac{BDFp} scheme with $\op \in \S{1}^{8}$. The necessary condition for increasing the order of accuracy by one in the new (composed) scheme is the use of coefficients that are roots of associated algebraic equations. Among these roots, at least one is a complex number with a positive real part.

Performing the composition with the complex root requires computations in the complex plane, producing outputs with complex values. The real parts of these outputs represent approximations with an additional order of accuracy. While computations involving both real and imaginary parts increase the computational cost, numerical tests reveal that the composed flow of order $\op$, which uses $\op-1$ backward points, competes with classical \ac{BDFp} schemes (using $\op$ backward points) in achieving the same accuracy without additional CPU time for $\op=3$. Moreover, it outperforms classical \ac{BDFp} schemes for $\op > 3$, or for any order when the same number of backward points is used.

This composition technique also breaks the Dahlquist barrier of linear stability for implicit \ac{BDFp} schemes, which is limited to order \textbf{six}. The new composed schemes are stable up to order \textbf{eight}. Additionally, the imaginary parts of the outputs are shown to serve as error estimates for the resulting approximations. These error estimates are then utilized in the \ac{ATS} technique.

We further demonstrate that the root of the algebraic equations for compositions of order $\op \in \S{2}^{8}$ will have a positive real part—necessary for stable time stepping—if the ratio of two consecutive time steps satisfies a lower bound. For instance, in the case of double composing \ac{BDF}1 (a scheme of order two), the root will always have a positive real part, regardless of the ratio, aligning with the result in \cite{Li-22}. However, for the composition of \ac{BDF}2 (a scheme of order three), the root will have a positive real part only if the ratio exceeds $0.4506$. Similarly, for the fourth-order composed flow, the ratio must be greater than $0.6806$. Other lower bounds for composed flows up to order eight are presented in \cref{tab7}.

The structure of the paper will be as follows. The next Section presents the mathematical framework of \ac{BDF} schemes and tools we use to develop the process of composing them in \cref{sec-comp}. \cref{sec-nov} headlines the outcomes of this work. \cref{proof_theorem2} presents the proof of the main result.
\cref{sec-conv} demonstrates the convergence error of the composition.
\cref{sec-stab} presents the linear stability analysis of the composed flow, while \cref{sec_vts} presents the variable time stepping strategy.
\cref{sec-num} highlights some numerical tests that stress the usefulness of complex parts in providing error estimates and discusses the computational efficiency that results from involving complex parts.
We end with discussions, main conclusions, and some perspectives in \cref{sec13}.

\section{Notations and preliminaries}

For every $\t \in \I$, we denote by $\varphi_{\t}$ the exact flow of the Cauchy problem \eqref{diff-system}. It is defined by the following map:
\begin{equation}
 \varphi_{\t}: \left\rvert
 \begin{array}{ccl}
  \mathds{R}^d &\longrightarrow &\mathds{R}^d\\
  \y_0 & \mapsto & \varphi_{\t}(\y_0) = \y(\t),
 \end{array}
 \right.
\end{equation}
such that $\y(\t)$ is the solution to the \ac{IVP} \eqref{diff-system}.
However, it is not possible for almost all equations encountered in modeling problems to have the exact flow written explicitly in terms of elementary or even special functions. Therefore, numerical methods are to be applied in order to provide numerical approximations to exact flows \cite{book:tomas}.
These approximations are sought on a discrete set of points $\{\t_0,\t_1,\ldots,\t_n\} \subset \I$ with $\t_{i-1}<\t_{i}, \, \forall i\in\S{0}^n\ \equiv \{0,1,\ldots,n\}$ where $\y_{n}$  denotes the approximation to $\y(\t_n)$ and $\h_{n} \ \coloneqq\  \t_{n}-\t_{n-1}$ denotes the $n$\up{th} time step size after instant $\t_{n-1}$. Having equidistant points $\t_{n-i}$ with fixed time step $\h$ and approximations $\y_{n-i}$ of $\y(\t_{n-i})$ for $i\in\S{1}^\op\ \equiv \{1,\ldots,\op\}$ (as illustrated in the below figure),
\begin{figure}[ht]
 \centerline{
 \includegraphics[width=5.1cm]{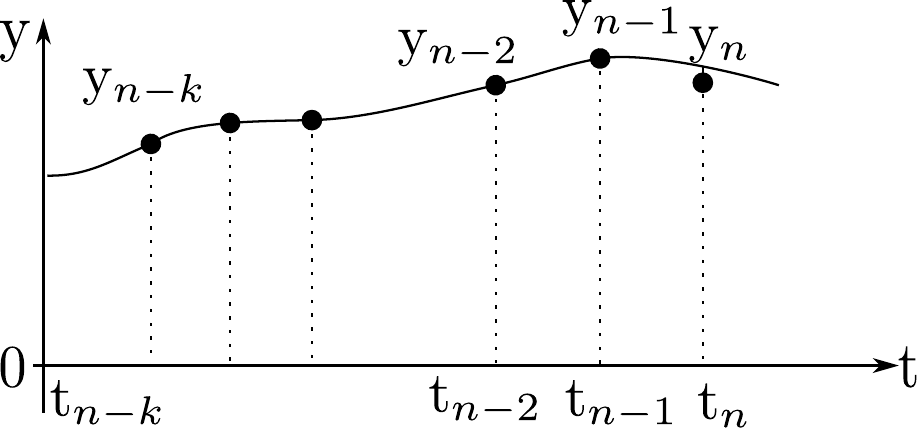}}
 \caption{Illustration of the interpolation polynomial $q$. }
\end{figure}
we present the general form of an \ac{LMS} method that computes $\y_{n}$ by solving the following equation \cite{henrici-62}:
\begin{equation}
\label{multistep-formula}
 \sum_{i=0}^{\op} \alpha_i \y_{n-i}   = \h\sum\limits_{j=0}^{\op}\beta_j \f(\t_{n-j},\y_{n-j}),
\end{equation}
where $\alpha_i$ and $\beta_i$ are coefficients defining the \ac{LMS} method. If $\beta_0\neq 0$, the scheme is said to be implicit. The generating polynomials are defined as follows:
\begin{equation}
 \label{generating-pol}
 \begin{array}{cc}
  \varrho(\zeta) \ \coloneqq\  \sum\limits_{i=0}^{\op}\alpha_i \zeta^i, \quad
  \sigma(\zeta) \ \coloneqq\  \sum\limits_{i=0}^{\op}\beta_i\zeta^i.
  \end{array}
\end{equation}
We have the following
\begin{theorem}
\label{thm1}{\cite[Theorem 2.4, pp 370]{book:hairer}}
An \ac{LMS} method defined by \cref{multistep-formula} is of order $\op$, if and only if one of the following conditions is satisfied:
 \begin{itemize}
  \item[i)] $\sum\limits_{j=0}^{\op}\alpha_j=0$ \text{and} $\sum\limits_{j=0}^{\op}\alpha_j j^m= m \sum\limits_{j=0}^{\op}\beta_j j^{m-1}$,  $\forall\,m\in \S{1}^{\op},$
  \item[ii)] $\varrho(\e^\h) - \h\sigma(\e^\h) =  \mathcal{O}(\h^{\op+1})$ for $\h \rightarrow 0,$
  \item[iii)] $\displaystyle\frac{\varrho(\zeta)}{\log(\zeta)} -\sigma(\zeta) = \mathcal{O}\big((\zeta-1)^\op\big)$ for $\zeta \rightarrow 1.$
 \end{itemize}
\end{theorem}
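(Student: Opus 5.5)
The plan is the classical one: define order through the local truncation error, Taylor-expand it, and show that the Taylor coefficients vanishing up to order $\op$ is exactly condition i). Conditions ii) and iii) are then reformulations of i) obtained by testing the scheme on the exponential function and by a change of variables.

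Following the convention of \cite{book:hairer}, I will take the method to be of order $\op$ when, for every sufficiently smooth $\y$, the linear difference operator
\begin{equation*}
L(\y,\t,\h) \coloneqq \sum_{i=0}^{\op}\alpha_i\,\y(\t-i\h) - \h\sum_{j=0}^{\op}\beta_j\,\y^{\prime}(\t-j\h)
\end{equation*}
satisfies $L(\y,\t,\h)=\mathcal{O}(\h^{\op+1})$. Here the index set is written up to $\op$ as in \cref{multistep-formula}; in general it runs over the $k$ steps of the method.

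\textbf{Step 1: i) is equivalent to order $\op$.} I Taylor-expand $\y(\t-i\h)$ and $\y^{\prime}(\t-j\h)$ about $\t$. The coefficient of $\y^{(m)}(\t)\,\h^m/m!$ in $L$ is
\begin{equation*}
(-1)^m\Big(\sum_i\alpha_i i^m - m\sum_j\beta_j j^{m-1}\Big),
\end{equation*}
and for $m=0$ it is $\sum_i\alpha_i$. So $L=\mathcal{O}(\h^{\op+1})$ for all smooth $\y$ exactly when these coefficients vanish for $m=0,\dots,\op$. For the converse direction I test with $\y(\t)=\t^m$, which isolates each coefficient.

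\textbf{Step 2: i) is equivalent to ii).} I apply $L$ to the exponential $\y(\t)=\e^{\t}$ at $\t=0$. This gives
\begin{equation*}
\sum_i\alpha_i\e^{-i\h} - \h\sum_j\beta_j\e^{-j\h},
\end{equation*}
which is $\varrho(\e^{-\h})-\h\sigma(\e^{-\h})$ with $\varrho,\sigma$ as in \cref{generating-pol}. Expanding this in powers of $\h$ produces exactly the same coefficients as in Step 1, so it is $\mathcal{O}(\h^{\op+1})$ if and only if i) holds. Replacing $\h$ by $-\h$ does not affect an $\mathcal{O}(\h^{\op+1})$ statement, which gives ii) in the stated sign convention. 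One point needs care here: the paper indexes $\alpha_i$ as multiplying $\y_{n-i}$, while \cite{book:hairer} uses forward indexing. I will match the statement by reversing indices ($i\mapsto k-i$), which multiplies the polynomials by a power of $\zeta$. That factor is a unit near $\zeta=1$, so no order condition changes.

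\textbf{Step 3: ii) is equivalent to iii).} I substitute $\zeta=\e^{\h}$, so that $\h=\log\zeta$ and, near $\zeta=1$, $\log\zeta$ is comparable to $\zeta-1$. Dividing ii) by $\h=\log\zeta$ gives
\begin{equation*}
\frac{\varrho(\zeta)}{\log\zeta}-\sigma(\zeta)=\mathcal{O}\big((\zeta-1)^{\op}\big).
\end{equation*}
Each step of this argument is reversible, which gives the equivalence.

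The main obstacle is bookkeeping rather than any substantive difficulty. The sign conventions tied to the backward versus forward indexing must be made consistent with \cref{generating-pol}. It also has to be checked that testing on $\e^{\t}$ is enough to get the converse in Step 2, which holds because the expansion coefficients are exactly the moment conditions of i).
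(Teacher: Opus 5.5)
Your three steps (Taylor-expand the difference operator, test it on $\e^{\t}$, substitute $\zeta=\e^{\h}$) are the standard proof in the reference the paper cites; the paper itself gives no proof. The gap is in the step you called bookkeeping. Each reconciliation you make there is false.

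\begin{itemize}
\item \textbf{Step 1 sign.} Take your backward operator $L(\y,\t,\h)=\sum_i\alpha_i\y(\t-i\h)-\h\sum_j\beta_j\y^{\prime}(\t-j\h)$. The term $-\h\beta_j\y^{\prime}(\t-j\h)$ contributes $-(-1)^{m-1}m\beta_j j^{m-1}=(-1)^m m\beta_j j^{m-1}$ to the coefficient of $\h^m\y^{(m)}(\t)/m!$. That coefficient is therefore $(-1)^m\big(\sum_i\alpha_i i^m+m\sum_j\beta_j j^{m-1}\big)$, with a plus sign. This is also exactly what expanding $\varrho(\e^{-\h})-\h\sigma(\e^{-\h})$ in your Step 2 gives.
\item \textbf{Step 2 sign.} Replacing $\h$ by $-\h$ turns that expression into $\varrho(\e^{\h})+\h\sigma(\e^{\h})$, not into ii).
\item \textbf{Index reversal.} Reversing indices $i\mapsto k-i$ does not multiply $\varrho$ by a power of $\zeta$; it produces $\zeta^{k}\varrho(1/\zeta)$. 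The inversion $\zeta\mapsto 1/\zeta$ is precisely the $\h\mapsto-\h$ that flips the sign. Correspondingly the moments $\sum_i\alpha_i i^m$ become $\sum_i\alpha_i(k-i)^m$, which are different numbers.
\end{itemize}

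In fact no correct argument can reach the statement as printed. It is the forward-indexed theorem (for $\sum_{i=0}^{k}\alpha_i\y_{n+i}=\h\sum_{i=0}^{k}\beta_i\f(\t_{n+i},\y_{n+i})$) placed on top of the backward-indexed \cref{multistep-formula} and \cref{generating-pol}. Backward Euler (first row of \cref{tab0}: $\alpha_0=1$, $\alpha_1=-1$, $\beta_0=1$) is a counterexample:
\begin{itemize}
\item it has order one;
\item yet $\sum_j\alpha_j j=-1\neq 1=\sum_j\beta_j$;
\item and $\varrho(\e^{\h})-\h\sigma(\e^{\h})=1-\e^{\h}-\h=-2\h+\mathcal{O}(\h^2)$.
\end{itemize}
The paper's own \cref{lin-sys-BDF-constant}, with right-hand side $(0,-1,0,\ldots,0)^\top$, already uses the sign-corrected condition.

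The repair is straightforward, and either route works.
\begin{itemize}
\item \textbf{Forward convention.} Run your argument verbatim with $L(\y,x,\h)=\sum_i\big(\alpha_i\y(x+i\h)-\h\beta_i\y^{\prime}(x+i\h)\big)$. There no sign issue arises, and this is the cited proof.
\item \textbf{Backward convention.} Keep the paper's indexing and prove the corrected equivalences
\[
\sum_j\alpha_j=0,\quad \sum_j\alpha_j j^m=-m\sum_j\beta_j j^{m-1}\ (m\in\S{1}^{\op});\qquad \varrho(\e^{-\h})-\h\sigma(\e^{-\h})=\mathcal{O}(\h^{\op+1});\qquad \frac{\varrho(\zeta)}{\log\zeta}+\sigma(\zeta)=\mathcal{O}\big((\zeta-1)^{\op}\big).
\]
The last follows from the second via $\zeta=\e^{-\h}$ and $\log\zeta=(\zeta-1)+\mathcal{O}\big((\zeta-1)^2\big)$.
\end{itemize}
Once the signs are consistent, the rest of your argument goes through unchanged: the converse via monomial test functions and the reversibility of the $\zeta$ substitution.
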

The second point in this Theorem is used later to prove that the proposed composition of a scheme of order $\op$ is precisely of order $\op+1$.

\subsection{\ac{BDFp}}
Assume we know the approximation of the solution on a set of points $\{ \t_{n-\op},\ldots, \t_{n-1}\}$, and we are looking to find an approximation $\y_{n}$ of $\y(\t_n)$. We consider the polynomial $q(\t)$ interpolating the points
$\big\{ (\t_i,\y_i) \,\arrowvert \, i\,\in \S{n-\op}^{n} \big\}$.

\subsubsection{Fixed time step}
This polynomial could be expressed in terms of backward differences
\begin{equation}
 \nabla^0\y_{n} = \y_{n}, \quad \nabla^{j+1}\y_{n} = \nabla^j\y_{n} - \nabla^j\y_{n-1},
\end{equation}
for all $\t\in [\t_{n-1},\t_n]$, such that $\t_n = \t_{n-1}+\h$, as follows (see \cite[pp 357]{book:hairer}):
\begin{equation}
\label{interpolation_difference}
 q(\t) \equiv q(\t_{n-1} + s\h) = \sum\limits_{j=0}^{\op} (-1)^j \binom{-s+1}{j}\nabla^j\y_{n}\,, \quad \forall s\in [0,1],
\end{equation}
where $\displaystyle \binom{s}{j} \ \coloneqq\ \frac{s!}{j!(s-j)!}$ is the binomial coefficient.
The implicit \ac{BDFp} consists of determining the unknown $\y_{n}$ such that the classical interpolation polynomial $q(\t)$ satisfies the following condition:
\begin{equation}
\label{cond_BDF}
 \left.\frac{\d }{\d \t}q(\t)\right\rvert_{\t=\t_n} = \f(\t_n,\y_{n}).
\end{equation}
Applying this condition to \cref{interpolation_difference}, we get the following relation:
\begin{equation}
 \sum\limits_{j=0}^{\op} \eta_j^* \nabla^j \y_{n} = \h\f(\t_n,\y_{n}), \quad \text{with } \eta_j^* \ \coloneqq\  (-1)^j \left.\frac{\d}{\d s}\binom{-s+1}{j}\right\rvert_{s=1},
\end{equation}
where $\left. (.)\right\rvert_{s=1}$ represents the evaluation operator at $s=1$. Though, we have $\eta_0^* = 0$ and  $\eta_j^* = 1/j$ for $j\in\S{1}^{\op}$. By replacing these coefficients and using difference formulas, we have:
\begin{align}
\label{coefficients-alpha_i}
\begin{aligned}
  \sum\limits_{j=1}^\op \frac{1}{j} \nabla^j \y_{n}
   &= \sum\limits_{j=1}^{\op} \frac{1}{j} \left[ \sum\limits_{i=0}^{j} (-1)^i \binom{j}{i}\y_{n-i} \right], \\
  &\text{by permuting the} \sum \\
  &=  \sum\limits_{i=0}^{\op} \left[ (-1)^i  \sum\limits_{j=i+1}^{\op}\frac{1}{j}\binom{j}{i}\right] \y_{n-i} = \sum_{i=0}^{\op} \gf{i} \y_{n-i}.
  \end{aligned}
\end{align}
We present in \cref{tab0} coefficients $\{\gf{i} \,\vert\, i\,\in \S{0}^{\op}\}$ for $\op\,\in\S{1}^{5}$.
\begin{table}[ht]
\begin{center}
\begin{minipage}{\textwidth}
\caption{Coefficients $\gf{i}$ of \ac{BDF} with fixed time step.}\label{tab0}%
\begin{tabular*}{\textwidth}{@{\extracolsep\fill}c?ccccccc@{\extracolsep\fill}}
\toprule
 $\op$ &  $\gf{0}$  & $\gf{1}$ & $\gf{2}$ & $\gf{3}$ &  $\gf{4}$ &  $\gf{5}$ \\
\midrule
 1 & 1 &-1\\ 
 2 & $\cfrac{3}{2}$ & -2 & $\cfrac{1}{2}$  \\ 
 3 & $\cfrac{11}{6}$ & -3 & $\cfrac{3}{2}$ & $-\cfrac{1}{3}$  \\ 
 4 & $\cfrac{25}{12}$ & -4 & 3 & $-\cfrac{4}{3}$ & $\cfrac{1}{4}$  \\ 
 5 & $\cfrac{137}{60}$ & -5 & 5 & $-\cfrac{10}{3}$ & $\cfrac{5}{4}$ & $ -\cfrac{1}{5}$  \\
\bottomrule
\end{tabular*}
\end{minipage}
\end{center}
\end{table}
One has explicitly $\{\gf{j}\,\vert\,j\in\S{0}^{\op}\}$, $\y_{n}$ is the solution of the following equation.
\begin{equation}
\label{BDFp}
 \sum_{j=0}^{\op} \gf{j} \y_{n-j}   = \h \f(\t_{n},\y_{n}).
\end{equation}
We present another way to find coefficients $\gf{i}$. It is based on the point $(ii)$ in \cref{thm1}. We consider a linear equation ($\f(\t,\y)\equiv \y$). Though all preceding solutions are given as a function of $\y_{n}$ such that: $\y_{n-j} = \y_{n} \e^{-j\h}$. Substituting $\y_{n-j}, \forall \,j\in\S{0}^{\op}$ in \cref{BDFp} allows us to write:
\begin{eqnarray*}
 \cancel{\y_{n}} \sum\limits_{j=0}^{\op} \gf{j}\e^{-j\h} = \h\,\cancel{\y_{n}} \quad\Longrightarrow
 \quad
 \sum\limits_{j=0}^\op \gf{j} \left( 1 + \sum\limits_{m=1}^{\infty}\frac{(-j\h)^m}{m!}\right) &=& \h,\\
 \sum\limits_{j=0}^\op \gf{j} +
 \h \sum\limits_{j=0}^\op-i\,\gf{j} +
 \sum\limits_{m=2}^{\infty} \frac{1}{m!}\left( \sum\limits_{j=0}^\op (-j)^m\gf{j} \right)\h^m &=& \h.
\end{eqnarray*}
To obtain the $\op$\up{th} order approximation, coefficients $\{\gf{j} \,\vert\, j \in \S{0}^{\op}\}$ must verify the following linear system:
\begin{equation}
\label{lin-sys-BDF-constant}
\left(
 \begin{array}{ccccc}
  1 & 1 & 1 & \ldots & 1\\
  0 &1 & 2 & \ldots & \op\\
  0 &1 & 2^2 & \ldots & \op^2\\
  \vdots &\vdots &\vdots &\ddots &\vdots \\
  0 &1 & 2^\op & \ldots & \op^\op
 \end{array}
\right) \cdot
\left(
\begin{array}{c}
 \gf{0} \\ \gf{1} \\  \gf{2} \\ \vdots \\ \gf{\op} \\
\end{array}
\right)
=
\left(
\begin{array}{c}
 0\\-1\\ 0 \\ \vdots\\ 0
\end{array}
\right).
\end{equation}
Solving the linear system produces coefficients for any order $\op$, including those were listed in \cref{tab0} for $\op \in \S{1}^{5}$. We end by presenting the difference operator, $L$, associated to a \ac{BDFp} with a fixed time step $\h$:
\begin{equation}
 \label{difference-operator}
 L\big(\y,\t,\h) \coloneqq \sum_{j=0}^{\op} \Big(\gf{j}\y(\t -j\h) - \h\frac{\d \y}{\d\t}(\t)  \Big).
\end{equation}
The difference operator $L$ plays a crucial role in determining the local error and order of convergence. If one can write the difference operator $L$ in its Taylor development as follows:
\begin{equation}
 \label{diff_oper_p}
 L\big(\y,\t,\h) = \sum_{j=0}^{\infty} \E_j \h^j\frac{\d^j \y}{\d\t^j}(\t),
\end{equation}
thus according to \cref{thm1} the scheme is of order $\op$ if coefficients $\E_j = 0$ for $j\in\S{0}^\op$. We can use Lemma 2 in \cite[page 369]{book:hairer} to find the following error using $\E_{\op+1}$:
\begin{equation}
 \y(\t_{n+1})-\y_{n+1} = \frac{1}{\g{0}}\E_{\op+1} \h^{\op+1}\frac{\d^{\op+1} \y}{\d\t^{\op+1}}(\t_n) + O(\h^{\op+2}).
\end{equation}
This will be of use in \cref{proof_theorem2}. In the next subsection, we present the procedure for computing coefficients $\g{j}$ in the case of a variable time step.

\subsubsection{Variable time stepping}
When having a variable step size $\h_n = \t_n-\t_{n-1}$, the polynomial $q(\t)$ that interpolates the set $\big\{(\t_i,\y_i) \lvert\, i\,\in \S{0}^{\op}\big\}$ can be expressed as follows (see \cite[pp 400]{book:hairer}):
\begin{equation}
\label{BDF-vts}
 q(\t) \equiv \sum\limits_{j=0}^{\op} \prod\limits_{i=0}^{j-1}(\t-\t_{n-i})\times \delta^j\y[\t_n,\t_{n-1},\ldots,\t_{n-j} ],
\end{equation}
where the divided differences $\delta^j\y[\t_n,\dots,\t_{n-j}]$ are defined recursively by:
\begin{equation*}
 \begin{array}{rcl}
  \delta^0\y[\t_n] &\coloneqq&\y_{n},\\[6pt]
  \delta^j\y[\t_n,..,\t_{n-j}] &\coloneqq& \displaystyle\frac{\delta^{j-1}\y[\t_n,..,\t_{n-j+1}] - \delta^{j-1}\y[\t_{n-1},..,\t_{n-j}]}{\t_n-\t_{n-j}} \equiv \sum\limits_{i=0}^j c_{i,j} \y_{n-i}.
 \end{array}
\end{equation*}
Applying condition \eqref{cond_BDF} on \cref{BDF-vts} leads us to the variable step size formula:
\begin{equation*}
 \sum\limits_{j=1}^{\op} \h_n\prod\limits_{i=1}^{j-1}(\t_n-\t_{n-i}) \times \delta^j \y[\t_n,\t_{n-1},\ldots,\t_{n-j} ] = \h_n \f(\t_n,\y_{n}).
\end{equation*}
Replacing the formula for $\delta^j\y[\t_n,\ldots,\t_{n-j}]$ in the above equation and permuting the sum leads us to find $\y_{n}$ that is the solution of:
\begin{equation}
\label{bdfk_adapt}
 \sum\limits_{j=0}^{\op} \g{j}\y_{n-j} = \h_n\f(\t_n,\y_{n}),
\end{equation}
where coefficients $\g{j}$ are given below :
\begin{equation}
\label{formula_gi_adapt}
\left\lbrace
 \begin{array}{rcl}
  \g{j} &=& \sum\limits_{m=j}^{\op} b_mc_{j,m}, \quad\text{with}\\
  b_m &=& \h_n\prod\limits_{\ell=1}^{m-1}(\t_n-\t_{n-\ell}), m \in\S{1}^{\op} ,\quad b_0=0, \quad \text{and}\\
  c_{j,m} &=& \prod\limits_{\substack{\ell=0\\ \ell\neq j}}^m \cfrac{1}{\t_{n-j} - \t_{n-\ell}},\quad j\in\S{0}^{\op},\quad m\in \S{\max(1,j)}^{\op}.
 \end{array}
 \right.
\end{equation}
\cref{alg:gi} in \ref{sec_app1}, labeled $\mathsf{Coeff}$, exhibits the computation of $\{\g{i}\,\vert\,i\in\S{0}^{\op}\}$ for any ordered set of points $\big\{\t_i\: \wedge\: \t_i<\t_{i+1}\,\vert\, i \in \S{n-\op}^{n}\: \wedge \: n\geqslant \op \big\}$.

\subsection{Fixed point algorithm}

In nonlinear equations, $\y_{n}$ is obtained by solving an iterative fixed point problem $\F_\op\big([\g{0},\ldots,\g{\op}],\y_{n})=\y_{n}$, with:
\begin{equation}
 \label{BDF_fpp}
 \F_\op\,\Bigl([\g{0},\ldots,\g{\op}],\y_{n}\Bigr) \ \coloneqq\  \frac{1}{\g{0}} \Big(\h_n\f(\t_n,\y_{n}) - \sum\limits_{i=1}^{\op} \g{i}\y_{n-i}\Big).
\end{equation}
The algorithm produces a series of approximations $\y_{n,\ell}$ to $\y(\t_n)$, though it requires the initialization $\y_{n,0}$, which is equal to $\y_{n-1}$ in most considered cases. This is represented by the following system:
\begin{equation}
\label{iter_FP}
 \left\lbrace
 \begin{array}{cl}
  \y_{n,0} &= \y_{n-1},\\
  \y_{n,\ell+1} &= \F_\op\big([\g{0},\ldots,\g{\op}],\y_{n,\ell}\big).
 \end{array}
 \right.
\end{equation}
The series of iterations is stopped when two consecutive approximations are close in norm to to a pre-defined user tolerance $\tol$ parameter:
\begin{equation*}
 \me_{n,\ell+1} =\|\y_{n,\ell+1} -\y_{n,\ell}\|< \tol.
\end{equation*}
The fixed point algorithm converges if the following inequality is verified when $\mathrm{x}$ belongs to a neighborhood of the exact solution $\y(\t_n)$:
$$\displaystyle\left\lvert\frac{\partial \F_\op}{\partial \y_n}(\mathrm{x})\right\rvert<1.$$
To accelerate the convergence rate, one can use Aitken's delta square scalar acceleration process \cite{ramiere-15,martinez-23}.

\subsection{Numerical flow}
\label{sec:nf}
We define the numerical flow associated with the \ac{BDFp}:
\begin{definition}\label{nf-bdf}
We define $\mT_{n-1,\op}\ \coloneqq\ (\t_{n-\op},\ldots,\t_{n-1})\in \mathds{R}^{\op}$ and $\Y_{n-1,\op} \ \coloneqq\  (\y_{n-\op},\ldots,\y_{n-1}) \in {(\mathds{R}^d)}^{\op}$. The numerical flow $\nf{\op}{\h}$ of \ac{BDFp} is defined below:
\begin{equation}
 \nf{\op}{\h_n}:
 \left\lvert
 \begin{array}{ccl}
   \mathds{R}^{\op}\times{(\mathds{R}^d)}^{\op}\ & \longrightarrow\ &  \mathds{R}^{\op}\times{(\mathds{R}^d)}^{\op},\\
  (\mT_{n-1,\op},\Y_{n-1,\op})\ &\longmapsto\ & (\mT_{n,\op},\Y_{n,\op}). 
 \end{array}
 \right.
\end{equation}
It has outputs $\mT_{n,\op}\ \coloneqq\ (\t_{n-\op+1},\ldots,\t_{n}) \in \mathds{R}^{\op}$ with $\t_n = \t_{n-1}+\h_n$ and $\Y_{n,\op}\ \coloneqq\ (\y_{n-\op+1},\ldots,\y_{n})$ with $\y_{n}$ solution to \cref{bdfk_adapt}.
\end{definition}
The numerical flow associated to \ac{BDFp} scheme is implemented in \cref{alg:BDFk} in \ref{sec_app2}, relying on Function $\mathsf{Coeff}$ schematized in \cref{alg:gi}. The purpose of presenting these formulations here is to prepare the ground for the proposed algorithm.

\section{Results/Findings}
\label{sec-nov}

Consider the vector $\mT_{n-1,\op}$ of $\op$ time instants where at every instant $\t_{n-i}$, $i \in \S{1}^{\op}$, there is an approximation of the solution of order $\op$.
\begin{figure}[!ht]
\centerline{
\setlength{\unitlength}{1cm}
 \begin{picture}(10,3)(1,0)
  \put(1,1.5){\vector(1,0){9}}
  \put(1.,0.){\vector(0,1){3}}
  \put(.8,3.05){$\Im(\t)$}
  \put(10,1.4){$\Re(\t)$}
  \put(1.5,1.5){\circle*{.1}}
  \put(1.2,1.2){$\t_{n-\op}$}
  \put(1.5,1.){\line(0,-1){.5}}
  \put(1.8,0.8){$\h_{n-\op+1}$}
  \put(1.5,.7){\line(1,0){1.5}}
  \put(3,1.){\line(0,-1){.5}}
  \put(3,1.5){\circle*{.1}}
  \put(2.7,1.2){$\t_{n-\op+1}$}
  \put(6,1.5){\circle*{.1}}
  \put(5.7,1.2){$\t_{n-1}$}
  \put(6,1.5){\vector(1,1){1}}
  \put(6,2.){\rotatebox{45}{$\a_1\h_n$}}
  \put(7,2.5){\circle*{.1}}
  \put(7.,2.7){$\tnmh$}
  \put(8.5,1.5){\circle*{.1}}
  \put(8.2,1.2){$\t_{n}$}
  \put(6,1.){\line(0,-1){.5}}
  \put(7.2,0.8){$\h_n$}
  \put(6,0.7){\line(1,0){2.5}}
  \put(8.5,1.){\line(0,-1){.5}}
 \end{picture}}
 \caption{Illustration of the composition technique in the complex plane.}
\end{figure}
Finding an approximation $\y_n$ of $\y(\t_n)$ at $\t_n \equiv \t_{n-1}+\h_n$ by composing twice the numerical flow $\nf{\op}{}$ consists to advance first to an intermediate point between $\t_{n-1}$ and $\t_n$, denoted by $\tnmh\ \coloneqq\ \t_{n-1} + \a_1\h_n$. Here $\a_1$ is to be determined by solving an algebraic equation related to the set of points $\{t_{n-i}\,\vert\,i\in\S{1}^{\op}\}$.
We denote by $(\Tnmh{\op},\Ynmh{\op})$ the outputs of the numerical flow such that $\Ynmh{\op}$ contains $\ynmh$ to be an approximation of $\y(\tnmh)$.
\[\begin{cases}
(\Tnmh{\op},\Ynmh{\op})&\ \coloneqq\ \nf{\op}{\a_1\h_n}\big(\mT_{n-1,\op},\Y_{n-1,\op}\big),\\
 \Tnmh{\op}&\ \coloneqq\  (\t_{n-\op+1},\ldots,\t_{n-1},\tnmh),\\
\Ynmh{\op}&\ \coloneqq\  (\y_{n-\op+1},\ldots,\y_{n-1},\ynmh).
\end{cases}\]
We apply again the numerical flow $\nf{\op}{}$ having $(\Tnmh{\op},\Ynmh{\op})$ to be its inputs, and integrating with the time step ${\a_2\h_n}\ \coloneqq\ (1-\a_1)\h_n$. We denote by:
\[
\begin{cases}
\big(\mT^{\prime}_{n,\op},\Y^{\prime}_{n,\op} \big)&\ \coloneqq\  \nf{\op}{\a_2\h_n}\big(\Tnmh{\op},\Ynmh{\op} \big),\\
 \mT^{\prime}_{n,\op}&\ \coloneqq\  (\t_{n-\op+2},\ldots,\t_{n-1},\tnmh,\t_n),\\
 \Y^{\prime}_{n,\op}&\ \coloneqq\  (\y_{n-\op+2},\ldots,\y_{n-1},\ynmh,\hy_n).
\end{cases}\]
We also define the following ratios:
\begin{equation*}
 \begin{cases}
  \eps{j} &\ \coloneqq\  \displaystyle\frac{\tnmh - \t_{n-j}}{\tnmh - \t_{n-1}},\\
  \r{j} &\ \coloneqq\  \displaystyle\frac{\t_{n-1}-\t_{n-j}}{\t_n-\t_{n-1}},
 \end{cases}
\quad \forall j\in \S{1}^{\op}
\end{equation*}
and by using $\a_1 \coloneqq \displaystyle \frac{\tnmh-\t_{n-1}} {\t_n - \t_{n-1}}$, we have:
\begin{equation}
 \label{eps_i}
 \eps{j} \equiv 1+\frac{\r{j}}{\a_1}, \quad \forall j\in \S{1}^{\op}.
\end{equation}
We enunciate the following
\begin{theorem}
\label{comp_bdfk}
Consider having $\big(\mT_{n-1,\op},\Y_{n-1,\op} \big)$ and the numerical flow $\nf{\op}{}$ given in \cref{nf-bdf} in \cref{sec:nf}. If $\a_1$ and $\a_2 \in \mathds{C}$ verifying:
 \begin{equation}
 \label{sysa1a2}
 \begin{cases}
   \a_1+\a_2 = 1,\\
     \eps{\op}  \a_1^2 +\g{0}\a_2^2=0
 \end{cases}
 \end{equation}
with $\g{0} \ \coloneqq\  \displaystyle\sum\limits_{i=1}^{\op}\frac{1}{\eps{i}}$
and if
$$\y(\t_{n-i}) - \y_{n-i} = \mathcal{O}(\h_{n-i}^{\op+1})\quad \forall i\,\in\,\S{1}^{\op},$$
then the following composition:
\begin{equation}
\label{composition_BDFk}
 \nf{\op}{\a_2\h_n}\circ \nf{\op}{\a_1\h_n}\big(\mT_{n-1,\op},\Y_{n-1,\op} \big)
\end{equation}
produces the couple $(\mT^{\prime}_{n,\op},\Y^{\prime}_{n,\op})$ with $\hy_n$, in $\Y^{\prime}_{n,\op}$, approximating $\y(\t_n)$ at $\t_n = \t_{n-1}+\h_n$ such that:
\begin{equation}
\label{error_Rehyn}
 \y(\t_n) - \Re(\hy_n) = \mathcal{O}(\h_n^{\op+2}).
\end{equation}
In addition, we have the following error estimation:
 \begin{equation}
  \label{error-estimate}
 \left\rvert \y(\t_n) - \Re(\hy_n)\right\rvert \sim \eC_\op\times \left\rvert \Im(\hy_n)\right\rvert
 \end{equation}
 with $\eC_\op$ is to be determined in \cref{error_constant}.
\end{theorem}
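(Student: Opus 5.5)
We follow the local error of the two substeps through to their leading $\h_n^{\op+1}$ terms and choose $\a_1,\a_2$ so that the combined leading term cancels. We work with the local truncation error only, so all backward values are exact: $\y_{n-i}=\y(\t_{n-i})$. The hypothesis $\y(\t_{n-i})-\y_{n-i}=\mathcal{O}(\h_{n-i}^{\op+1})$ will be put back at the end.

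\textbf{First substep.} This is a BDF$\op$ step of size $\a_1\h_n$ on the nodes $\t_{n-\op},\ldots,\t_{n-1},\tnmh$. Its coefficients come from \cref{formula_gi_adapt}. After scaling by $\a_1\h_n$, they depend only on the ratios $\eps{j}=1+\r{j}/\a_1$ from \cref{eps_i}. In particular the leading coefficient is $\sum_{i=1}^{\op}1/\eps{i}$, which is the quantity $\g{0}$ in the statement; it follows from differentiating the Lagrange basis polynomial at the last node.

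\textbf{Taylor expansion.} As for \cref{diff_oper_p}, we expand the difference operator $L$ about $\tnmh$. Interpolation exactness up to degree $\op$ kills $\E_0,\ldots,\E_{\op}$. The first nonzero term is the derivative at $\tnmh$ of the nodal polynomial $\prod_{\ell}(\t-\t_{n-\ell})$ divided by $(\op+1)!$. In the scaled variables this is $(\a_1\h_n)^{\op+1}\prod_{j=1}^{\op}\eps{j}\,\y^{(\op+1)}/(\op+1)!$. Inverting the implicit relation (Lemma 2 of \cite{book:hairer}, as quoted above) gives
\[
\y(\tnmh)-\ynmh = \frac{C_1}{\g{0}}(\a_1\h_n)^{\op+1}\y^{(\op+1)}(\t_{n-1}) + \mathcal{O}(\h_n^{\op+2}),
\]
where $C_1$ is an explicit product of the $\eps{j}$.

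\textbf{Second substep.} This step has size $\a_2\h_n$ and the new node $\tnmh$. Its output error has two parts:
\begin{itemize}
\item its own truncation error, of the same form with node ratios measured from $\tnmh$;
\item the error already in $\ynmh$, carried forward through the coefficient of $\ynmh$ divided by the leading coefficient of the second step.
\end{itemize}
Expanding in powers of $\h_n$, the complex point $\tnmh$ matters only through polynomial identities. These stay valid for complex $\a_1$ because $\f$ extends analytically to $\J\times\V$.

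\textbf{Cancellation and the real part.} We collect the $\h_n^{\op+1}$ coefficient of $\y(\t_n)-\hy_n$. We expect it to factor as $\y^{(\op+1)}$ times a multiple of $\eps{\op}\a_1^2+\g{0}\a_2^2$, possibly after using $\a_1+\a_2=1$. Then \cref{sysa1a2} makes it vanish, so $\y(\t_n)-\hy_n=\mathcal{O}(\h_n^{\op+2})$. Every term in this expansion is a real derivative of $\y$ multiplied by a complex polynomial in $\a_1$. Since $\y(\t_n)$ is real, taking real parts gives \cref{error_Rehyn}. Previously accumulated errors of order $\op+1$ in the backward values only feed into higher orders in this combination, by the standard consistency/zero-stability argument.

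\textbf{Error estimate.} Because the order-$(\op+1)$ term cancels exactly, $\Im(\hy_n)$ equals $\h_n^{\op+2}\y^{(\op+2)}$ times $\Im$ of some coefficient $D(\a_1)$, up to higher order. The real error equals $\h_n^{\op+2}\y^{(\op+2)}$ times $\Re\,D'(\a_1)$ for another coefficient $D'$. Hence \cref{error-estimate} holds with $\eC_\op=|\Re D'|/|\Im D|$, provided $\Im D\neq0$, which holds for the non-real root $\a_1$.

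\textbf{Main obstacle.} The hardest step is the bookkeeping that shows the $\h_n^{\op+1}$ coefficient reduces exactly to $\eps{\op}\a_1^2+\g{0}\a_2^2$. This needs closed forms for the coefficient of $\ynmh$ in the second step and for the nodal-polynomial derivatives, in terms of the $\eps{j}$. Our first attempt would be to verify it for the linear test equation $\y'=\lambda\y$, using point (ii) of \cref{thm1}, and then argue that the leading error coefficient is independent of $\f$.
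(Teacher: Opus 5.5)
Your plan is sound, but the decisive step is missing. The decomposition you use is exactly the split behind the paper's $\mE_{\op+1}$ in \eqref{mE_j}: the truncation error of the second jump, plus $-\Gp{1}/\g{0}$ times the first-jump error. However, the identity that carries the theorem is only \emph{expected}: namely, that the $\h_n^{\op+1}$ coefficient is a multiple of $\eps{\op}\a_1^2+\g{0}\a_2^2$. So the first claim is not proved as written.

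It closes in a few lines with the tools you name, and no test equation is needed. Use $\tnmh-\t_{n-j}=\a_1\eps{j}\h_n$, $\t_n-\t_{n-j}=\Eps{j}\h_n$ and $\t_n-\tnmh=\a_2\h_n$. Differentiate at $\t_n$ the Lagrange basis polynomial of $\tnmh$ on the nodes $\t_n,\tnmh,\t_{n-1},\dots,\t_{n-\op+1}$; this gives the weight $\Gp{1}$ of $\ynmh$ in the second jump. The nodal-polynomial formula gives the truncation error $L_1$ of the first jump (the paper's \eqref{Ek1}). It also gives $\widetilde{L}_2$, the truncation error of the second jump with $\y(\tnmh)$ in place of $\ynmh$. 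With derivatives taken at $\t_n$:
\[
\begin{gathered}
\Gp{1}=-\frac{\prod_{j=1}^{\op-1}\Eps{j}}{\a_1^{\op-1}\prod_{j=1}^{\op-1}\eps{j}},\qquad
L_1=-\frac{\a_1^{\op+1}\prod_{j=1}^{\op}\eps{j}}{(\op+1)!}\,\h_n^{\op+1}\y^{(\op+1)}+\mathcal{O}(\h_n^{\op+2}),\\
\widetilde{L}_2=-\frac{\a_2^{2}\prod_{j=1}^{\op-1}\Eps{j}}{(\op+1)!}\,\h_n^{\op+1}\y^{(\op+1)}+\mathcal{O}(\h_n^{\op+2}).
\end{gathered}
\]
Since $\y(\tnmh)-\ynmh=L_1/\g{0}+\mathcal{O}(\h_n^{\op+2})$,
\[
\begin{aligned}
\y(\t_n)-\hy_n&=\frac{1}{\Gp{0}}\Big(\widetilde{L}_2-\frac{\Gp{1}}{\g{0}}L_1\Big)+\mathcal{O}(\h_n^{\op+2})\\
&=-\frac{\prod_{j=1}^{\op-1}\Eps{j}}{(\op+1)!\,\Gp{0}\,\g{0}}\big(\g{0}\a_2^2+\eps{\op}\a_1^2\big)\,\h_n^{\op+1}\y^{(\op+1)}+\mathcal{O}(\h_n^{\op+2}).
\end{aligned}
\]
This is what you need. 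As a check, for $\op=1$ the bracket is $2\a_1^2-2\a_1+1$. For $\op=2$ with constant steps it is $(3\a_1^3-\a_1^2+\a_1+1)/(\a_1+1)$. These are the numerators of $\Gf{1+1}$ and $\Gf{2+1}$.

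This route is genuinely different from the paper's, and shorter. The paper encodes the first-jump error in a modified last row of the $(\op+2)\times(\op+2)$ system \eqref{linear_system_Gi}. It computes $\G{\op+1}$ by Cramer's rule (\cref{prop1}, \cref{sec_app3}) to exhibit the factor $(\a_1-1)\big[\Eps{0}^2\g{0}+\a_1^2\eps{\op}\big]$. It then uses \cref{lemma_3} to identify the second-jump coefficients with the $\G{i}$, so $\mE_{\op+1}=0$ holds automatically. Your route gives the leading error coefficient in closed form without determinants; the paper's gives every $\G{i}$ explicitly.

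Two further statements need correcting.

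First, errors of order $\op+1$ in the back values do not ``only feed into higher orders''. $\hy_n$ depends on $\y_{n-1},\dots,\y_{n-\op}$ with $\mathcal{O}(1)$ weights summing to one; it is exactly an affine combination when $\f\equiv0$. Such errors therefore reappear at order $\h^{\op+1}$, and zero-stability only controls their accumulation over many steps. Read \eqref{error_Rehyn} as a statement about the local error with exact back values. That is all that your argument, and the paper's difference-operator argument, actually establish.

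Second, your error-estimate paragraph makes the same simplification as the paper's \eqref{mEk2}: it assumes the $\h_n^{\op+2}$ term of $\y(\t_n)-\hy_n$ is a single complex multiple of $\y^{(\op+2)}$. It is not.
\begin{itemize}
\item Because the first jump is implicit, $\y(\tnmh)-\ynmh\approx(\g{0}-\a_1\h_n\partial_{\y}\f)^{-1}L_1$. This contains the extra $\h_n^{\op+2}$ term $\a_1\h_n\,\partial_{\y}\f\,L_1/\g{0}^2$.
\item That term reaches $\hy_n$ with weight $-\Gp{1}/\Gp{0}$. It multiplies a different elementary differential, $\partial_{\y}\f\,\y^{(\op+1)}$, with its own complex coefficient.
\item Hence the ratio of real to imaginary parts is problem dependent.
\end{itemize}
For $\op=1$ this is fatal. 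There $\a_2=\bar{\a}_1$, so for $\y'=\lambda\y$ with $\lambda\in\mathds{R}$ and exact $\y_{n-1}$ one gets $\hy_n=\y_{n-1}/(1-z+z^2/2)$, $z=\lambda\h_n$, which is real. Meanwhile $\y(\t_n)-\hy_n=z^3\y_{n-1}/6+\mathcal{O}(z^4)$, so no constant $\eC_1$ can work. Likewise, ``$\Im D\neq0$ because $\a_1$ is non-real'' is not an argument.
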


This theorem shows that the composition employs complex arithmetic operations and \textbf{produces $\hy_n$ such that its real part approximates the solution with an additional order of accuracy to the basic numerical flow}. One chooses the next time step $\h_n = \t_{n}-\t_{n-1}$, \cref{comp_bdfk} shows that $\a_1$ is the solution of an algebraic equation depending on coefficients $\r{j}, j\in\S{1}^{\op}$. It provides also in $\Im(\hy_n)$ an error estimate of order $\op+1$ to the approximation $\Re(\hy_n)$.
The proof of this theorem will be given in \cref{proof_theorem2}. We present in \cref{nf_def} the composition, denoted by $\nfc{\op+1}{}$, in a compact form.

\begin{definition}
\label{nf_def}
Consider the \ac{IVP} defined by \eqref{diff-system} and having the numerical flow $\nf{\op}{}$ associated to a \ac{BDFp} scheme. Consider having the first $\op$ approximations $\y_0,\ldots,\y_{\op+1}$ to $\y(\t_0),\ldots, \y(\t_{\op+1})$. We define the new (composed) numerical flow by:
 \begin{equation}
  \label{nf_comp}
\nfc{\op+1}{\h_n}:\left\lvert
 \begin{array}{ccl}
   \mathds{R}^{\op}\times(\mathds{R}^{d})^{\op} & \rightarrow &  \mathds{R}^{\op}\times(\mathds{R}^{d})^{\op}\\
  (\mT_{n-1,\op},\Y_{n-1,\op}) &\mapsto & (\mT_{n,\op},\Y_{n,\op}), 
 \end{array}
 \right.
 \end{equation}
 where $\mT_{n,\op}\ \coloneqq\ (\t_{n-\op+1},\ldots,\t_n)$, $\Y_{n,\op}\ \coloneqq\ (\y_{n-\op+1},\ldots,\y_{n-1},\y_{n})$ such that $\y_{n} = \Re(\hy_n)$ with $\hy_n$ obtained by the composition in \cref{comp_bdfk}.
\end{definition}
We present the process of composition in \cref{alg:BDFk_BDFk}. After calculating  coefficients $\r{j}$ and elaborating algebraic expressions $\eps{\op}$ and $\g{0}$ that are functions of $\a_1$, roots $\a_1$ and $\a_2$ are to be determined by solving the algebraic system \eqref{sysa1a2} with an appropriate software ({\sf Maple, Python, Matlab, Mathematica, ...}). After choosing the complex root with a positive real part, the composition is established, and the real part of $\hy_n$ is known to approximate $\y(\t_n)$. Once the approximation is obtained by the composition, we show that the imaginary part $\Im(\hy_n)$ is an error estimate of the produced approximation as stated in the last point of Theorem \ref{comp_bdfk}. Regarding the linear stability of the composition, we have the following result.
\begin{theorem}
\label{thm3}
 The numerical flow $\nfc{\op}{}$ is $\mA(\vartheta$)-stable up to order $\op=8$ with stability angles reported in \cref{tab4}.
\end{theorem}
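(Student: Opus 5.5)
The plan is to run the classical Dahlquist-test-equation analysis for linear multistep methods, with uniform step $\h_n\equiv\h$, so that every ratio $\r{j}=-(j-1)$ is constant. Then $\a_1$, $\a_2$, $\eps{j}=1-(j-1)/\a_1$ and all BDF coefficients of both substeps are fixed complex numbers. We apply $\nfc{\op}{}$ to $\y'=\lambda\y$ and set $z=\h\lambda$. Here $\nfc{\op}{}$ is built from $\nf{\op-1}{}$, so it uses $\op-1$ back values.

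\textbf{First substep.} The nodes are $\t_{n-1},\ldots,\t_{n-\op+1}$ and $\tnmh$, and the coefficients come from \cref{formula_gi_adapt}. Writing them as $\gamma^{(1)}_j$, this substep reads
\[
\gamma^{(1)}_0\ynmh+\sum_{j\geq1}\gamma^{(1)}_j\y_{n-j}=\a_1 z\,\ynmh,
\]
so $\ynmh$ is an explicit rational expression in $z$ that is linear in $(\y_{n-1},\ldots,\y_{n-\op+1})$.

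\textbf{Second substep.} The nodes are $\t_{n-1},\ldots,\t_{n-\op+2}$, $\tnmh$ and $\t_n$, with coefficients $\gamma^{(2)}_j$. This gives $\hy_n$ as a linear combination of $\ynmh$ and the remaining back values, divided by $\gamma^{(2)}_0-\a_2 z$.

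\textbf{Characteristic polynomial.} Taking real parts gives a linear recurrence
\[
\y_n=\sum_{j=1}^{\op-1}A_j(z)\,\y_{n-j},\qquad A_j(z)=\Re\bigl(\text{complex rational function of }z\bigr).
\]
For real $z$ the real part is literal. For complex $z$ the recurrence must be read on $\mathds{R}^2$-valued data, or equivalently by splitting $\Re(w)=(w+\bar w)/2$ with $\bar z$ appearing. We therefore study stability with $\lambda$ real, and for general $\lambda$ via the companion matrix of the real-linear map.

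Since the linear test problem $\y'=\lambda\y$ is real-analytic, its natural extension is $\f(\t,\y)=\lambda\y$ with $\y$ complex, and $\Re$ is applied only to the final output. Consequently the map is $\mathds{R}$-linear, and we may write it as $\y_n=\Re\bigl(\sum_j B_j(z)\y_{n-j}\bigr)$ with $B_j$ complex rational functions. The companion matrix $M(z)$ then has size $2(\op-1)$ when $z\notin\mathds{R}$. We define the stability region $S=\{z:\rho(M(z))\leq1$, with unimodular eigenvalues semisimple$\}$.

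\textbf{Angle $\vartheta$.} A($\vartheta$)-stability asks that $S$ contain the sector $\{|\arg(-z)|<\vartheta\}$.

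\textbf{Main obstacle.} The key step, and the expected obstacle, is proving that sector inclusion for each $\op\le 8$. A closed form is hopeless, so I follow the standard BDF route through the root-locus (boundary-locus) method. For each $\op$ I trace $\partial S$ by solving $\det(M(z)-\e^{\mi\theta}I)=0$ for $z$ as $\theta$ runs over $[0,2\pi)$. The stability angle is then the minimum over the locus of $|\arg(-z)|$ for points of the locus lying in the left half-plane, once it is checked that the unbounded component containing $z\to-\infty$ is stable.

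\textbf{Additional checks.} Three further points must be verified.
\begin{itemize}
\item Zero-stability at $z=0$. The consistency established in \cref{comp_bdfk} gives a simple root $1$, and the remaining roots must be checked to lie strictly inside the unit disk.
\item The limit $z\to-\infty$, where $\rho(M)<1$ must hold.
\item That no locus branch enters the sector.
\end{itemize}
The angles are then computed numerically and reported in \cref{tab4}. For $\op=9$ I would exhibit instability near $z=0$ to show that the bound $\op=8$ is sharp.
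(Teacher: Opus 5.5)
Your overall route (uniform steps, the test equation $\y'=\lambda\y$, reduction to a linear recurrence, then a numerical boundary-locus computation of the angles) is the same kind of numerical verification the paper performs. However, you analyse a different recurrence from the one behind \cref{tab4}, so your last step (``the angles are then computed numerically and reported in \cref{tab4}'') does not follow.

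The paper eliminates $\ynmh$ and obtains $\sum_{i=0}^{\op}\Theta_i\y_{n-i}=0$, with
\begin{itemize}
\item $\Theta_0=(\a_1z-\g{0})\bigl(\Gp{0}-(1-\a_1)z\bigr)$,
\item $\Theta_i=\Gp{1}\g{i}+(\a_1z-\g{0})\Gp{i+1}$ for $1\leqslant i\leqslant\op-1$,
\item $\Theta_\op=\Gp{1}\g{\op}$.
\end{itemize}
These coefficients are complex and no real part is taken. The angles in \cref{tab4} are those of the complex-coefficient polynomial $p_\op(\omega,z)=\sum_i\Theta_{\op-i}\omega^i$, obtained from its roots on a grid of $z$ followed by the Gander--Wanner procedure.

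In that notation your $B_j=-\Theta_j/\Theta_0$, and your recurrence uses $\Re(B_j)$ instead. This is a different polynomial already on the real axis. For $\nfc{3}{}$ at $z=0$, the parasitic root of $p_2$ is $\Theta_2/\Theta_0=\a_1/\bigl((2\a_1+1)(2-\a_1)\bigr)\approx0.14+0.15\,\mi$ (for $\a_1\approx0.401+0.741\,\mi$), whereas yours is its real part. There is no reason the two families of regions, and hence the angles $81.511$, $67.796$, $45$, $4.146$, should coincide. To prove the statement as written you must work with $p_\op(\omega,z)$ itself. Note that $p_\op(\e^{\mi\theta},z)=0$ is quadratic in $z$, so its locus has two branches.

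Your handling of non-real $z$ is also unsound, independently of the comparison with the paper. For $\lambda\notin\mathds{R}$ the exact solution of $\y'=\lambda\y$ is genuinely complex, and applying $\Re$ to the complex state discards its true imaginary part. The resulting $\mathds{R}$-linear map $(\y_{n-1},\ldots,\y_{n-\op+1})\mapsto\Re\bigl(\sum_jB_j(z)\y_{n-j}\bigr)$ is therefore not consistent with the ODE and not holomorphic in $z$. The spectral radius of your $2(\op-1)\times2(\op-1)$ matrix then does not describe the method on a real system $\Y'=A\Y$ with eigenvalues in the sector, which is what $\mA(\vartheta)$-stability is meant to measure.

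The imaginary unit introduced by $\a_1$ must be kept separate from the one carried by the spectrum. Since all $\g{i}$ and $\Gp{i}$ are rational in $\a_1$ with real coefficients, for real $A$ one has $\Re\bigl(B_j(\h A;\a_1)\bigr)=\tfrac12\bigl(B_j(\h A;\a_1)+B_j(\h A;\bar{\a}_1)\bigr)$. After diagonalising $A$, each mode therefore satisfies a complex-linear recurrence of order $\op-1$ with the holomorphic, real-coefficient functions $C_j(z)=\tfrac12\bigl(B_j(z;\a_1)+B_j(z;\bar{\a}_1)\bigr)$, with no doubling of dimension. That is the model faithful to the real-part feedback of \cref{nf_def}. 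It coincides with yours only for real $z$, and it is still not the paper's $p_\op$, so adopting it would establish a statement whose angles have not yet been computed.

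There are two smaller points.
\begin{itemize}
\item With uniform steps, the paper's definitions give $\r{j}=j-1$ and $\eps{j}=1+(j-1)/\a_1$ (cf.\ \cref{eps_i}), not $-(j-1)$ and $1-(j-1)/\a_1$. With your sign every coefficient would be wrong.
\item Your sharpness claim for $\op=9$ (``instability near $z=0$'') is asserted rather than shown; \cref{tab4} only records that no sector was found.
\end{itemize}
Your extra checks at $z=0$, as $z\to-\infty$, and on the locus branches are more careful than the paper's grid scan, and are worth keeping once the correct polynomial is used.
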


\begin{algorithm}[!ht]
 \caption{Adaptive numerical simulation for \eqref{diff-system} using $\nfc{\op+1}{}$\label{alg-adap}}
\begin{algorithmic}
\Require $\f$, $\t_0$, $\y_0$, $\T{}$, $\h$, $\tol$, $\op$
\For{$i\gets 1$ to $\op-1$}
\State $\h_i \gets \h$
\State $\t_i\gets \t_{i-1} +\h_i$
\State Construct $\y_i$, approximation to $\y(\t_i)$ of order $\op$
\Comment{(See    \cite{deeb:part1})}
\EndFor
\State $\h_n \gets \h$
\State $\t_n \gets \t_{\op-1} +\h_n$
\While{$\t_n\leqslant \T{}$}
\State $\mT_{n-1,\op}\gets [\t_{n-\op},\ldots,\t_{n-1}] $
\State $\Y_{n-1,\op}\gets [\y_{n-\op},\ldots,\y_{n-1}]$
\State $(\mT_{n,\op},\Y_{n,\op})\gets \nfc{\op+1}{\h_n}(\mT_{n-1,\op},\Y_{n-1,\op})$
\State $\me_n \gets \eC_\op \times \Im(\hy_n)$ 
\State $\h_{n+1} \gets \h_n \times  \left(\displaystyle\frac{\tol}{|\me_n|}\right)^{\frac{1}{\op+2}}$
\Comment{(update the time step)}
\State $\h_{n+1} \gets \min\left( \max\left( \h_{n+1}, \frac{\h_n}{\ell_\op} \right) , \h_n\times \ell_\op \right)$
\Comment{($\ell_\op$ in \cref{bounding_timestep})}
\State $\t_{n+1} \gets \t_n + \h_{n+1}$
\State $n\gets n+1$
\EndWhile
\end{algorithmic}
\end{algorithm}
The proof of \cref{thm3} is done graphically in \cref{sec-stab}. To this end, we present the full outcome of this work in \cref{alg-adap}. The Algorithm presents the process of computing approximations using the composed flow with \ac{ATS}.
The loop \textbf{for} aims to initialize processing $\nfc{\op+1}{}$ by finding approximations of $\y(\t_j)$ of order $\op$ with $\t_j = \t_0 + j \h$, for $j\in \S{1}^{\op-1}$. It could be done using the composition proposed in Part I \cite{deeb:part1}.
The time step is updated regarding the rate between the user tolerance $\tol$ and the imaginary part representing an error estimate. A bound limit on the ratio between two consecutive time steps is applied to ensure the validity of the time marching.
We will show by numerical experiments in \cref{sec-conv} the convergence error of the composition and in \cref{sec-num} that the imaginary part is a reliable estimation of the local error.

\section{Procedure of composing \ac{BDFp}}
\label{sec-comp}

In this section, having the numerical flow $\nf{\op}{}$ of \ac{BDFp} scheme, we establish the composition procedure generating $\nfc{\op+1}{}$ to increase by one the order of approximation. We first generate coefficients $\{\g{i}\,\vert\,i\in \S{0}^{\op}\}$, using the last $\op$ points $\{\t_{n-i}\,\vert\,i\in\S{1}^{\op}\}$ and $\tnmh\ \coloneqq\ \t_{n-1}+\a_1\h_n$ to integrate the equation by the numerical flow $\nf{\op}{\a_1\h_n}$, though finding approximation $\ynmh$ to $\y(\tnmh)$. Then, we generate coefficients $\{\G{i}\,\vert\,i\in\S{0}^{\op+1}\}$ using the last $\op+1$ points $\{\t_{n-i}\,\wedge\, \tnmh\,\vert\,i\in\S{1}^{\op}\}$ and $\t_n \equiv\tnmh+(1-\a_1)\h_n $ to integrate with $\nf{\op+1}{\a_2\h_n}$ finding  $\y_n$. We select the step $\a_1\h_n$ such that it cancels $\y_{n-\op}$ in the second integration, \emph{i.e,} the coefficient $\G{\op+1}$ in front of $\y_{n-\op}$ must vanish.

\subsection{Intermediate solution: first step}
First, \ac{BDFp} is used to compute an intermediate approximation $\ynmh$ of $\y(\tnmh)$. For that, we solve the following equation:
\begin{equation}
\label{bdf_ynmh}
 \g{0} \ynmh + \sum\limits_{i=1}^{\op}\g{i}\y_{n-i} = (\tnmh-\t_{n-1})\f(\tnmh,\ynmh)
\end{equation}
that is represented by the associated numerical flow:
\begin{equation}
\label{first_flow}
 \Big(\Tnmh{\op},\Ynmh{\op}\Big) = \nf{\op}{\a_1\h_n} \Big(\mT_{n-1,\op},\Y_{n-1,\op}\Big),
\end{equation}
where $\Tnmh{\op}$
and $\Ynmh{\op}$ are defined in \cref{sec-nov}.
For every coefficient $\a_1$ and time step $\h_n$, coefficients $\{\g{i}\,\vert\,i\in\S{0}^{\op}\}$ could be obtained using the technique presented in \cite[Appendix G.4]{book:tomas}. Consider a linear differential equation $\f(\t,\y)=\y$ with $\y(\t) = \ynmh\, \e^{-(\tnmh-t)}$ being its exact analytical solution. Consider also $\y_{n-i}, i\in\S{1}^{\op}$ are known exactly; $\y_{n-i} = \ynmh\,\e^{-(\tnmh - t_{n-i})}, i\in\S{1}^{\op}$, thus, they are to be substituted in \cref{bdf_ynmh}. After simplifying by $\ynmh$, we obtain:
\begin{equation*}
 \g{0} + \sum\limits_{i=1}^{\op} \g{i} \e^{-(\tnmh-\t_{n-i})} = \tnmh-\t_{n-1}.
\end{equation*}
Writing the Taylor series expansion of $ \e^{-(\tnmh-\t_{n-i})}$ in the above equation and equating terms up to order $\op$, we obtain the following linear system generating the solution for unknowns $\g{i}$:
\begin{equation}
 \label{linear_system_gi}
  \MA_{n,\op}\cdot
 \left(
 \begin{array}{c}
\g{0}\\
 \g{1}\\
 \g{2}\\
\vdots\\
\g{\op}
\end{array}
 \right)
 =
 \left(
 \begin{array}{c}
0\\
 -1\\
 0\\
\vdots\\
0
 \end{array}
 \right),
\,\,
\MA_{n,\op}\ \coloneqq\
 \left(
 \begin{array}{ccccc}
  1&1&1& \ldots &1\\
  0&\eps{1} & \eps{2} &\ldots &\eps{\op}\\
  0&\eps{1}^2 & \eps{2}^2 &\ldots &\eps{\op}^2\\
  \vdots & \vdots& \vdots &\vdots &\vdots\\
  0&\eps{1}^{\op} & \eps{2}^{\op} &\ldots &\eps{\op}^{\op}
 \end{array}
 \right),
 \end{equation}
where $\{\eps{i}\,\vert\,i\in\S{1}^{\op}\}$ are already defined in \cref{eps_i}.
System \eqref{linear_system_gi} can be solved symbolically using Cramer's rule and the determinant formulas of Vandermonde matrices. We have:
\begin{equation}
\label{formula_gi}
\g{i} =
\left\lbrace
 \begin{array}{cl}
    \sum\limits_{j=1}^{\op} [\eps{j}]^{-1} ,& \text{if }  i=0,\\
    (-1)^{\op}[\eps{i}]^{-1}\prod\limits_{\substack{j=1\\j\neq i}}^{\op}\eps{j} [\eps{i}-\eps{j}]^{-1}, & \text{if }i\in\S{1}^{\op}.
 \end{array}
\right.
\end{equation}
Therefore, $\ynmh$, in the case of a linear equation, is given by:
\begin{equation}
\label{formula-ynmh}
 \ynmh = \frac{1}{(\tnmh-\t_{n-1}) - \g{0}}\sum\limits_{i=1}^{\op} \g{i}\y_{n-i},
\end{equation}
For now, let us chose $\a_1 \in \mathds{C}$  such that $0<\Re({\a_1})<1$ and $-1<\Im(\a_1)<1$.
Before going to the second integration, we establish the following Lemma:
\begin{lemma}
\label{lemma_1}
Taking $\eps{j}$ defined by \cref{eps_i} and $\g{j}$ solution to System \eqref{linear_system_gi}, we have the following identity:
 \begin{equation}
  \sum\limits_{j=1}^{\op} \eps{j}^{\op+1} \g{j} = (-1)^{\op}\prod\limits_{j=1}^{\op} \eps{j}.
\end{equation}
\end{lemma}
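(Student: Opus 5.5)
We need to show $\sum_{j=1}^{\op}\eps{j}^{\op+1}\g{j}=(-1)^{\op}\prod_j\eps{j}$, where $\g{j}$ solve system \eqref{linear_system_gi}. The plan is to use only rows $2,\dots,\op+1$ of the system, which do not involve $\g{0}$. These rows state that the weights $w_j\coloneqq\eps{j}\g{j}$, $j\in\S{1}^{\op}$, satisfy
\[
\sum_{j=1}^{\op} w_j\,\eps{j}^{k-1}=-\delta_{k,1},\qquad k\in\S{1}^{\op}.
\]
That is, $\sum_j w_j P(\eps{j})=-P(0)$ for every polynomial $P$ of degree at most $\op-1$. We are then asked to evaluate $\sum_j w_j\eps{j}^{\op}$, the first "moment" not fixed by the system.

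For this I take the monic polynomial
\[
\omega(x)\coloneqq\prod_{j=1}^{\op}(x-\eps{j})=x^{\op}+P(x),\qquad \deg P\leqslant \op-1 .
\]
Since $\omega(\eps{j})=0$ for every $j$, we get $\eps{j}^{\op}=-P(\eps{j})$. Hence
\[
\sum_j w_j\eps{j}^{\op}=-\sum_j w_jP(\eps{j})=P(0)=\omega(0)=(-1)^{\op}\prod_{j=1}^{\op}\eps{j},
\]
which is exactly the claimed identity. This requires the $\eps{j}$ to be pairwise distinct, so that the Vandermonde block is invertible and the $\g{j}$ are well defined. By \cref{eps_i}, $\eps{j}=1+\r{j}/\a_1$, and the $\r{j}$ are distinct because the $\t_{n-j}$ are distinct, so this holds.

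As a cross-check, I would also verify the result against the explicit formula \eqref{formula_gi}. There $w_j=(-1)^{\op}\prod_{i\neq j}\eps{i}/(\eps{j}-\eps{i})$, which is $(-1)^{\op}\bigl(\prod_i\eps{i}\bigr)\big/\bigl(\eps{j}\,\omega'(\eps{j})\bigr)$. Consequently $\sum_j \eps{j}^{\op}w_j$ becomes a Lagrange/partial-fraction sum $\sum_j \eps{j}^{\op-1}/\omega'(\eps{j})=1$, again giving $(-1)^{\op}\prod_j\eps{j}$.

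The main subtlety is bookkeeping rather than mathematics. One must check that the row indexing of $\MA_{n,\op}$ is as used above, namely that row $k+1$ has entries $\eps{j}^{k}$ with right-hand side $-\delta_{k,1}$. One must also track the sign convention $(-1)^{\op}$ in \eqref{formula_gi}, in case the paper's formula differs by a sign from what the system yields; I will trust the system itself and use the $\omega$ argument as the proof.
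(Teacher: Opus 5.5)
Your proof is correct, and your main argument takes a different route from the paper's; what you call a cross-check is essentially the paper's own proof.

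\textbf{The paper's route.} It substitutes the closed form \eqref{formula_gi} and factors out $\prod_j\eps{j}$. This reduces the claim to $\sum_j \eps{j}^{\op-1}/\prod_{m\neq j}(\eps{j}-\eps{m})=1$. It then writes this sum over the common denominator $\prod_{m<\ell}(\eps{m}-\eps{\ell})$ and asserts ``by induction'' that the numerator equals that product. That identity is just the cofactor expansion of the Vandermonde determinant along its row of highest powers.

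\textbf{Your route.} You never solve the system. You read rows $2,\dots,\op+1$ of \eqref{linear_system_gi} as moment conditions on $w_j=\eps{j}\g{j}$, and you reduce $x^{\op}$ modulo the node polynomial $\omega$. The first moment not prescribed by the system then comes out as $\omega(0)=(-1)^{\op}\prod_j\eps{j}$.

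\textbf{Comparison.} Your argument is shorter and self-contained. It does not depend on the Cramer formula \eqref{formula_gi}, including its sign $(-1)^{\op}$, nor on the identity the paper leaves to an induction. It also shows directly that the lemma computes the first moment left free by the order conditions, which is what produces $\E_{\op+1}$ in \eqref{Ek1}. The paper's route has the advantage of working with the explicit $\g{j}$, which it uses elsewhere; for example, $\g{0}=\sum_j 1/\eps{j}$ enters \eqref{sysa1a2}.

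\textbf{A correction to your well-posedness remark.} Your $\omega$-argument shows the identity holds for any solution of the system, with no distinctness assumption. Distinct and \emph{nonzero} $\eps{j}$ are what make $\MA_{n,\op}$ invertible, since its determinant is $\prod_j\eps{j}$ times a Vandermonde determinant. The condition $\eps{j}\neq 0$ holds because $\eps{j}=0$ would force $\a_1=-\r{j}\leqslant 0$, which the choice $\Re(\a_1)>0$ excludes.
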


\begin{proof}
First, we replace formulas of $\g{j}$ from \cref{formula_gi} in the above formula:
\begin{eqnarray*}
 \sum\limits_{j=1}^{\op} \eps{j}^{\op+1} \g{j} &=&(-1)^{\op}\sum\limits_{j=1}^{\op} \eps{j}^{\op+1}[\eps{j}]^{-1}\prod\limits_{\substack{m=1\\m\neq j}}^{\op}\eps{m} [\eps{j}-\eps{m}]^{-1},
\end{eqnarray*}
then, we factorize by $\prod\limits_{j=1}^{\op}\eps{j}$:
\begin{eqnarray*}
 &=& (-1)^{\op}\prod\limits_{j=1}^{\op}\eps{j} \sum\limits_{j=1}^{\op} \frac{\eps{j}^{\op-1}}{\prod\limits_{\substack{m=1\\m\neq j}}^{\op}[\eps{j}-\eps{m}]}.
 \end{eqnarray*}
 We now take the common denominator in the sum to get:
 \begin{eqnarray*}
 \sum\limits_{j=1}^{\op} \eps{j}^{\op+1} \g{j}&=& (-1)^{\op}\prod\limits_{j=1}^{\op}\eps{j} \frac{\displaystyle \sum\limits_{j=1}^{\op} (-1)^{j-1}\eps{j}^{\op-1}\prod\limits_{\substack{m=1\\m\neq j}}^{\op-1}\prod\limits_{\substack{\ell=m+1\\ \ell\neq j}}^{\op}[\eps{m}-\eps{\ell}]}{\displaystyle \prod\limits_{m=1}^{\op-1}\prod\limits_{\ell=m+1}^{\op}[\eps{m}-\eps{\ell}]}\\
 &=& (-1)^{\op}\prod\limits_{j=1}^{\op}\eps{j},
\end{eqnarray*}
which is true because of
\begin{equation*}
{\displaystyle \sum\limits_{j=1}^{\op}(-1)^{j-1}\eps{j}^{\op-1}\prod\limits_{\substack{m=1\\m\neq j}}^{\op-1}\prod\limits_{\substack{\ell=m+1\\ \ell\neq j}}^{\op}[\eps{m}-\eps{\ell}]}={\displaystyle \prod\limits_{m=1}^{\op-1}\prod\limits_{\ell=m+1}^{\op}[\eps{m}-\eps{\ell}]}.
\end{equation*}
The last identity can be proven by induction.
\end{proof}

\subsection{Second jump}
\label{sec-secnd-jump}
After computing $\ynmh$, we have $\op+1$ points with $\y_{n-\op},\ldots,\y_{n-1}$. We define:
\begin{equation*}
\t_n = \tnmh + (1-\a_1) \h_n \equiv  \t_{n-1}+\h_n,
\end{equation*}
and having the scheme of \ac{BDF}($\op+1$) as in \cref{nf-bdf}, we apply its numerical flow as follows:
\begin{equation*}
 \Big(\mT^{\prime}_{n,\op+1},\Y^{\prime}_{n,\op+1}\Big) = \nf{\op+1}{(1-\a_1)\h_n} \Big(\Tnmh{\op+1},\Ynmh{\op+1}\Big).
\end{equation*}
Here, $\Tnmh{\op+1} = (\t_{n-\op},\ldots,\t_{n-1},\tnmh)$ and $\mT^{\prime}_{n,\op+1}=(\t_{n-\op+1},\ldots,\t_{n-1},\tnmh,\t_n)$ $\in~\mathds{R}^{\op+1}$, $\Ynmh{\op+1} = (\y_{n-\op},\ldots,\y_{n-1},\ynmh) $ and $\Y^{\prime}_{n,\op+1}=(\y_{n-\op+1},\ldots,\y_{n-1},\ynmh,\hy_n)$ $\in (\mathds{R}^{d})^{\op+1}$,
with $\hy_n$ solution to the following equation:
\begin{equation}
\label{bdf_yn}
 \G{0} \hy_n + \G{1} \ynmh + \sum\limits_{i=1}^{\op} \G{i+1}\y_{n-i} = (\t_{n}-\tnmh) \f(\t_n,\hy_n).
\end{equation}
To find coefficients $\{\G{i}\,\vert\,i\in\S{0}^{\op+1}\}$ that fulfill the $(\op+1)$\up{th} order conditions, we proceed in the same strategy as before and consider the linear equation $\f(\t,\y) = \y$, where $\ynmh$ and $\y_{n-i}$ for $i\in\S{1}^{\op}$ are presented below:
\begin{align*}
\ynmh &= \frac{\hy_n \sum\limits_{i=1}^{\op} \g{i} \e^{-(\t_n-\t_{n-i})}} {(\tnmh-\t_{n-1}) - \g{0}},&
\y_{n-i}& = \hy_n \e^{-(\t_n-\t_{n-i})},\quad i\in\S{1}^{\op}.
\end{align*}

Replacing all the above elements in \cref{bdf_yn} and simplifying by $\hy_n$, leads us to the following relation:
\begin{equation*}
 \G{0} + \G{1}\frac{ \sum\limits_{i=1}^{\op} \g{i} \e^{-(\t_n-\t_{n-i})}} {(\tnmh-\t_{n-1}) - \g{0}} + \sum\limits_{i=1}^{\op} \G{i+1} \e^{-(\t_n-\t_{n-i})} = \t_{n}-\tnmh.
\end{equation*}
Now, multiplying by $[(\tnmh-\t_{n-1}) - \g{0}]$ on both sides and writing the Taylor series expansion of $\e^{-(\t_n-\t_{n-i})}$, gives us the following equation:
\begin{align*}
 -\G{0}\g{0} + \G{0}(\tnmh-\t_{n-1})\hspace{4cm} \\
 + \left[\G{1}\g{1} - \G{2}\g{0} + \G{2} (\tnmh-\t_{n-1})\right] \left[  \sum\limits_{i=0}^{}\frac{(-1)^i}{i!}(\t_n-\t_{n-1})^i\right]  \\
 +\quad \cdots\hspace{11cm} \\
 + \left[\G{1}\g{\op} - \G{\op+1}\g{0} + \G{\op+1} (\tnmh-\t_{n-1})\right]  \left[  \sum\limits_{i=0}^{}\frac{(-1)^i}{i!}(\t_n-\t_{n-\op})^i\right] \\
 =-\g{0}(\t_n-\tnmh) + (\t_n-\tnmh) (\tnmh-\t_{n-1}).\hspace{3cm}
\end{align*}
To simplify notations, we define:
\begin{equation}
\label{Eps_i}
 \Eps{i} \ \coloneqq\  \frac{\t_n - \t_{n-i}}{\h_n} = 1-\a_1 + \a_1\eps{i} = 1+\r{i}, \quad i\in\S{1}^{\op}.
\end{equation}
Though, $(\t_n - \t_{n-i}) \equiv \Eps{i}\h_n$ for $i\in\S{1}^{\op}$. We recall that :
\begin{align*}
(\tnmh-\t_{n-1}) &= \a_1\h_n, & (\t_n-\tnmh) &= (1-\a_1)\h_n
\end{align*}
and we use them to rearrange the Taylor expansion as follows:
\begin{eqnarray}
&&\G{1}\Big(\sum\limits_{i=1}^{\op}\g{i}\Big)  -\Big(\G{0}+\sum\limits_{i=2}^{\op+1}\G{i}\Big)\g{0}\nonumber\\
&&+\Big(\a_1\G{0}- \G{1}\sum\limits_{i=1}^{\op}\Eps{i}\g{i}+\sum\limits_{i=2}^{\op+1}\left[\begin{array}{c}\a_1\\+\g{0}\Eps{i-1}\end{array}\right]\G{i} \Big)\h_n  \nonumber\\
 &&+\sum\limits_{j=2}^{\infty}(-1)^{j-1}\left(- \G{1}\sum\limits_{i=1}^{\op}\Eps{i}^j\g{i}+\sum\limits_{i=2}^{\op+1}\left[\begin{array}{c}j\a_1\Eps{i-1}^{j-1}\\+\g{0}\Eps{i-1}^j\end{array}\right]\G{i}  \right)\frac{\h_n^j}{j!}  \nonumber\\
\label{taylor-expan}
&&=-\g{0}(1-\a_1) \,\h_n +\a_1(1-\a_1)\,{\h_n^2}.
\end{eqnarray}
To simplify this equation, we state the following
\begin{proposition}
\label{lemma_2}
Define $\Eps{0} \ \coloneqq\ 1-\a_1\equiv \a_2$ and take $\Eps{i}$ that are defined in \cref{Eps_i} and $\{\g{i}\,\vert\,i\in \S{0}^{\op}\}$ as a solution to System \eqref{linear_system_gi}. Then,
\begin{equation}
 \label{simplif1}
 \begin{array}{cr}
  -\sum\limits_{i=1}^{\op} \Eps{i}^{j} \g{i} = \Eps{0}^{j-1}\Big(j\a_1 + \Eps{0}\g{0}\Big),&  \quad \forall j\in\S{1}^{\op}.
 \end{array}
\end{equation}
In addition, the following identity holds:
 \begin{equation}
 \label{simplif2}
  -\sum\limits_{i=1}^{\op} \Eps{i}^{\op+1} \g{i} = \Eps{0}^{\op}\Big((\op+1)\a_1 + \Eps{0}\g{0}\Big) +(-1)^{\op+1}\a_1^{\op+1}\prod\limits_{i=1}^{\op} \eps{i}.
\end{equation}
\end{proposition}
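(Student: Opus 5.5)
The plan is to express $\Eps{i}$ through $\eps{i}$ and expand binomially, then use the moment identities that define the $\g{i}$. By \cref{Eps_i} we have $\Eps{i} = \Eps{0} + \a_1\eps{i}$ for $i\in\S{1}^{\op}$, with $\Eps{0}=1-\a_1$. Hence, for every $j\geqslant 1$,
\begin{equation*}
 \sum_{i=1}^{\op}\Eps{i}^{j}\g{i} = \sum_{k=0}^{j}\binom{j}{k}\Eps{0}^{j-k}\a_1^{k}\,M_k,\qquad M_k \coloneqq \sum_{i=1}^{\op}\eps{i}^{k}\g{i}.
\end{equation*}
System \eqref{linear_system_gi} gives the moments directly. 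The first row yields $M_0 = -\g{0}$. The second row yields $M_1=-1$. The remaining rows yield $M_k=0$ for $k\in\S{2}^{\op}$. For $k=\op+1$, \cref{lemma_1} gives $M_{\op+1}=(-1)^{\op}\prod_{i=1}^{\op}\eps{i}$.

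For $j\in\S{1}^{\op}$, every index $k\leqslant j$ lies in $\S{0}^{\op}$, so only $k=0$ and $k=1$ contribute. This gives
\begin{equation*}
 \sum_{i=1}^{\op}\Eps{i}^{j}\g{i} = -\Eps{0}^{j}\g{0} - j\,\Eps{0}^{j-1}\a_1 = -\Eps{0}^{j-1}\big(j\a_1+\Eps{0}\g{0}\big),
\end{equation*}
which is \eqref{simplif1} after changing sign.

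For $j=\op+1$, the same two terms appear with $j=\op+1$. The only additional term is $k=\op+1$, namely $\a_1^{\op+1}M_{\op+1} = (-1)^{\op}\a_1^{\op+1}\prod_{i=1}^{\op}\eps{i}$. Negating the whole sum gives exactly \eqref{simplif2}, including the sign $(-1)^{\op+1}$.

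There is no real obstacle here. The nontrivial input is the $(\op+1)$-th moment, and that is already supplied by \cref{lemma_1}. The remaining care is bookkeeping: confirm that the right-hand side of \eqref{linear_system_gi} places the $-1$ in the first-moment row, so that $M_1=-1$ and not $M_0$. It is also worth checking that the identification $\Eps{i}=\Eps{0}+\a_1\eps{i}$ holds for all $i\in\S{1}^{\op}$, including $i=1$, where $\eps{1}=1$ and $\Eps{1}=1$.
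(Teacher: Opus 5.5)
Your proof is correct and follows the paper's own sketched argument. You substitute $\Eps{i}=\Eps{0}+\a_1\eps{i}$, expand binomially, read the moments $M_0=-\g{0}$, $M_1=-1$ and $M_k=0$ for $k\in\S{2}^{\op}$ from System \eqref{linear_system_gi}, and take $M_{\op+1}$ from \cref{lemma_1}, which simply makes explicit the bookkeeping the paper leaves implicit.
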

\begin{proof}
 The proof of the first equality is obtained by replacing $\Eps{i}$ by \cref{Eps_i} and using binomial formulas with System \eqref{linear_system_gi}. The second equality is obtained also using System \eqref{linear_system_gi} and the use of \cref{lemma_1}.
\end{proof}

By using the relation $\sum\limits_{i=1}^{\op}\g{i}=-\g{0}$, we simplify the first term relative to the 0\up{th} rank ($\h_n^0$) in \eqref{taylor-expan}, we obtain:
\begin{equation}
\label{eqG0}
\sum\limits_{i=0}^{\op+1} \G{i} =0.
\end{equation}
Now, we use \cref{eqG0} to simplify the term relative to $\h_n$ in \eqref{taylor-expan} to obtain:
\begin{equation}
 \label{eqG1}
 \g{0}\sum\limits_{i=0}^{\op}\Eps{i}\G{i+1} = -\g{0}\Eps{0},
\end{equation}
which will be used, with \cref{simplif1} in \cref{lemma_2} for $j=1$, to simplify the term associated to $\h_n^2$. We continue like this for all terms multiplying $\h_n^j$ for $j \in \S{3}^{\op}$ to get
\begin{equation}
 \label{eqGj}
 \g{0}\sum\limits_{i=0}^{\op}\Eps{i}^j\G{i+1} = 0, \quad \forall j\in \S{2}^{\op}.
 \end{equation}
We end by using \cref{simplif2} to simplify the term multiplying $\h_n^{\op+1}$ and \cref{eqGj} for $j=\op$. The linear system generating coefficients $\{\G{i}\,\vert\,i\in\S{0}^{\op+1}\}$ is presented below:
\begin{equation}
 \label{linear_system_Gi}
 \MAp_{n,\op+1}\cdot \left(\G{0},\ldots,\G{\op+1}\right)^\top = \left(0,-\Eps{0},0,\ldots,0\right)^\top,
\end{equation}
where $\MAp_{n,\op+1} \in \Mat{\op+2}{\mathds{C}}$ is the matrix given below:
\begin{equation}
\label{Apnk}
\MAp_{n,\op+1}\ \coloneqq \
 \left(
 \begin{array}{ccccc}
  1&1&1& \ldots &1\\[8pt]
  0 & \Eps{0} &\Eps{1}&\ldots &  \Eps{\op}\\
    \vdots & \vdots& \vdots &\ddots &\vdots\\
  0 &\Eps{0}^{\op} &  \Eps{1}^{\op} &\ldots & \Eps{\op}^{\op}\\[8pt]
  0 &\left[\begin{array}{c}  \Eps{0}^{\op+1}+\\ \frac{(-\a_1)^{\op+1}}{\g{0}}\prod\limits_{i=1}^{\op} \eps{i}\end{array} \right] & \left[ \Eps{1}^{\op+1}\right] &\ldots & \left[\Eps{\op}^{\op+1}\right]\\
 \end{array}
 \right)
 \cdot
 \end{equation}
After assembling the linear system relative to coefficients $\{\G{i}\,\vert\,i\in\S{0}^{\op+1}\}$, we proceed to establish its solution.

\subsection{Solution to System \eqref{linear_system_Gi}}
To find the solution to the linear system associated with coefficients $\{\G{i}\,\vert\,i\in\S{0}^{\op+1}\}$, the Cramer's rule will be used. We have the following.
\begin{proposition}\label{prop1}
 Having defined $\eps{i}$ for $i\in \S{1}^{\op}$ and $\Eps{i}$ for $i\in \S{0}^{\op}$, the expression of $\{\G{i}\,\vert\,i\in\S{0}^{\op}\}$, solution to System \eqref{linear_system_Gi} are explicitly given by:
 \begin{align}
 \begin{aligned}\label{form_Gkp1}
 \G{0} &=
   \displaystyle\frac{(\g{0}-2\a_1)^{\op-1} \Big((\g{0}-2\a_1)\Eps{0}\g{0} \big(\sum\limits_{i=0}^{\op}\frac{1}{\Eps{i}}\big) - \a_1\big(\sum\limits_{i=1}^{\op}\frac{1}{\Eps{i}}\big)\Big)}{\Big( \Eps{0}\g{0}-\a_1\Big)},\\
   \G{1} &= -\displaystyle \frac{\g{0}\Eps{0}}{\big( \Eps{0}\g{0}-\a_1\big)} \prod\limits_{i=1}^{\op}{\frac{1+\r{i}}{\a_1 + \r{i}}}, \\
   \G{j+1} &=  \frac{(-1)^{j+1}\prod\limits_{\substack{i=1\\i\neq j}}^{\op}\Eps{i}}{\prod\limits_{\ell=1}^{j-1}(\Eps{j}-\Eps{\ell})} \frac{(\a_1-1)\Big[\Eps{0}^2\g{0} + \a_1^2 \eps{j}\Big]}{\Eps{j}\big(\r{j}+\a_1\big)\big[\Eps{0}\g{0}-\a_1\big]},\quad j\in\S{1}^{\op-1},\\
  \G{\op+1} &= (-1)^{\op+1} \prod\limits_{i=1}^{\op-1}\left[\frac{\Eps{i}}{\Eps{\op}-\Eps{i}}\right] \frac{(\a_1-1)\Big[\Eps{0}^2\g{0} + \a_1^2 \eps{\op}\Big]}{\Eps{\op}\big(\r{\op}+\a_1\big)\big[\Eps{0}\g{0}-\a_1\big]}.
  \end{aligned}
 \end{align}
\end{proposition}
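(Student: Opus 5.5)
We solve System \eqref{linear_system_Gi} by reducing it to a perturbed Vandermonde system and then applying Cramer's rule.

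\textbf{Step 1: eliminate $\G{0}$.} The first row only fixes $\G{0}=-\sum_{i=1}^{\op+1}\G{i}$. So we first solve the $(\op+1)\times(\op+1)$ subsystem formed by rows $2,\ldots,\op+2$ for the unknowns $x_i:=\G{i+1}$, $i\in\S{0}^{\op}$, and recover $\G{0}$ at the end.

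This subsystem is the Vandermonde-type system $\sum_{i=0}^{\op}\Eps{i}^m x_i=-\Eps{0}\delta_{m,1}$ for $m\in\S{1}^{\op+1}$. It is exact except that the last row contains the extra term
\[
\beta \coloneqq \frac{(-\a_1)^{\op+1}}{\g{0}}\prod_{i=1}^{\op}\eps{i}
\]
in the first column. Factoring $\Eps{i}$ out of column $i$ gives a genuine Vandermonde matrix in the nodes $\Eps{0},\ldots,\Eps{\op}$, with rows of degrees $0,\ldots,\op$. Its determinant is $D=\prod_i\Eps{i}\prod_{k<\ell}(\Eps{\ell}-\Eps{k})$.

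By multilinearity in the first column, the full determinant is $D+\beta\,D_0$, where $D_0$ is the $(\op,\op)$ minor built on the nodes $\Eps{1},\ldots,\Eps{\op}$ with powers $1,\ldots,\op$. This equals $D_0=\prod_{i\ge1}\Eps{i}\prod_{1\le k<\ell}(\Eps{\ell}-\Eps{k})$. The ratio $D/D_0=\Eps{0}\prod_{i\ge1}(\Eps{i}-\Eps{0})$ simplifies with \cref{Eps_i}, since $\Eps{i}-\Eps{0}=\a_1\eps{i}=\r{i}+\a_1$. Hence
\[
D+\beta D_0=D_0\,\a_1^{\op}\prod_i\eps{i}\,\bigl(\Eps{0}-\a_1/\g{0}\bigr)\cdot(\pm1).
\]
This is exactly where the common denominator $\Eps{0}\g{0}-\a_1$ in all formulas of \eqref{form_Gkp1} comes from.

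\textbf{Step 2: Cramer numerators.} The right-hand side is $-\Eps{0}e_1$, so each numerator is $-\Eps{0}$ times the cofactor of the entry in row 1, column $j$.

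For $x_0=\G{1}$, the cofactor does not involve $\beta$. It is the Vandermonde minor on the nodes $\Eps{1},\ldots,\Eps{\op}$ with powers $2,\ldots,\op+1$, which equals $e_{\op}\cdot\prod\Eps{i}\,V$, where $e_{\op}=\prod_{i\ge1}\Eps{i}$ is the top elementary symmetric function. Dividing by Step 1 gives
\[
\G{1}=-\frac{\g{0}\Eps{0}}{\Eps{0}\g{0}-\a_1}\prod_{i}\frac{1+\r{i}}{\a_1+\r{i}},
\]
using $\Eps{i}=1+\r{i}$.

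For $x_j$ with $j\ge1$, the cofactor deletes column $j$ and keeps column $0$, including $\beta$. Multilinearity again splits it into a Vandermonde minor with powers $2,\ldots,\op+1$ on the nodes $\{\Eps{i}\}_{i\ne j}$, plus $\beta$ times a smaller Vandermonde minor. The first piece yields $\Eps{0}^2\g{0}$ and the $\beta$-piece yields $\a_1^2\eps{j}$, after cancelling the products $\prod_{i\ne j}(\Eps{i}-\Eps{0})$ against $\prod_i\a_1\eps{i}$. What remains is the factor $(\a_1-1)=-\Eps{0}$, the factor $1/(\Eps{j}(\r{j}+\a_1))$, and the signed products $\prod_{i\ne j}\Eps{i}/\prod_{\ell<j}(\Eps{j}-\Eps{\ell})$. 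The case $j=\op$ has the same shape; it is written separately only because its product ranges differ.

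\textbf{Step 3: recover $\G{0}$.} Sum the $\G{i}$ obtained above. Rather than summing directly, it is cleaner to view $\sum_i\G{i+1}$ as the Cramer solution with the first row replaced by ones. This is a Vandermonde determinant with an added constant row, and expanding it produces the $\sum 1/\Eps{i}$ terms. One then simplifies using $\g{0}=\sum1/\eps{i}$ and \cref{eps_i}.

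\textbf{Main obstacle.} The hard part is the bookkeeping in Steps 2–3: the signs $(-1)^{\cdot}$, and rewriting ratios of Vandermonde products in terms of $\r{i}$ and $\eps{i}$. Step 3 is the worst. The prefactor $(\g{0}-2\a_1)^{\op-1}$ in the stated $\G{0}$ looks suspicious, so I would first verify the formula for $\op=1,2$ symbolically to settle normalizations before writing the general argument.
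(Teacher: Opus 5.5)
Your route is the paper's own (the appendix does this only for $\G{\op+1}$): Cramer's rule, a multilinear split along the column carrying the extra entry $\beta$, and closed-form Vandermonde determinants. Step 1, your $\G{1}$, and the bracket $\Eps{0}^2\g{0}+\a_1^2\eps{j}$ in Step 2 are all correct.

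One small slip in Step 1: the $\beta$-entry sits in position $(\op+1,1)$, so it enters with cofactor sign $(-1)^{\op}$. The determinant is then exactly $D_0\,\a_1^{\op}\prod_i\eps{i}\,(\Eps{0}-\a_1/\g{0})$, with no $\pm$.

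The genuine gap is the last claim of Step 2. Removing the column of $\Eps{j}$ from the Vandermonde factor on $\Eps{1},\ldots,\Eps{\op}$ leaves both $\prod_{\ell<j}(\Eps{j}-\Eps{\ell})$ and $\prod_{\ell>j}(\Eps{\ell}-\Eps{j})$. Also, the cofactor of the right-hand-side entry has sign $(-1)^{j}$. Done honestly, your computation gives, for every $j\in\S{1}^{\op}$,
\begin{equation*}
\G{j+1}=(-1)^{\op}\prod_{\substack{i=1\\ i\neq j}}^{\op}\frac{\Eps{i}}{\Eps{j}-\Eps{i}}\;\frac{(\a_1-1)\big[\Eps{0}^2\g{0}+\a_1^2\eps{j}\big]}{\Eps{j}\,(\r{j}+\a_1)\,\big[\Eps{0}\g{0}-\a_1\big]}.
\end{equation*}
This is the displayed expression multiplied by $-1/\prod_{\ell=j+1}^{\op}(\Eps{\ell}-\Eps{j})$. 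So the stated $\G{j+1}$ for $j\in\S{1}^{\op-1}$ are missing factors, and the stated $\G{\op+1}$ has the wrong sign.

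This is not a bookkeeping artefact. Take $\op=1$, where $\r{1}=0$, $\eps{1}=\Eps{1}=\g{0}=1$ and $\Eps{0}=1-\a_1$. Direct elimination gives:
\begin{itemize}
\item $\G{1}=\frac{1-\a_1}{\a_1(2\a_1-1)}$, which agrees with the statement;
\item $\G{2}=\frac{(\a_1-1)(2\a_1^2-2\a_1+1)}{\a_1(2\a_1-1)}$, which is the paper's own $\Gf{1+1}$ but the negative of what the general $\G{\op+1}$ formula gives;
\item $\G{0}=\frac{2(1-\a_1)^2}{1-2\a_1}$, whereas the stated $\G{0}$ evaluates to $\frac{2\a_1^2-6\a_1+2}{1-2\a_1}$.
\end{itemize}
For $\op=2$, $j=1$, the stated $\G{2}$ misses the factor $-1/\r{2}$.

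So your suspicion about $\G{0}$ is justified, and Steps 2--3 cannot be closed toward the displayed targets. What your method actually proves is a corrected proposition.

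Your Step 3 device is exactly the right tool for $\G{0}$. Let $M$ be the reduced $(\op+1)\times(\op+1)$ matrix and $N$ the same matrix with its first row replaced by ones. Cramer applied to the transposed system gives $\G{0}=-\sum_i\G{i+1}=\Eps{0}\det N/\det M$. After the same multilinear split of $\beta$, the two pieces of $\det N$ are generalized Vandermonde determinants with exponents $\{0,2,3,\ldots\}$. They equal $e_{\op}(\Eps{0},\ldots,\Eps{\op})$ and $e_{\op-1}(\Eps{1},\ldots,\Eps{\op})$ times the respective Vandermonde factors, and one obtains
\begin{equation*}
\G{0}=\frac{\Eps{0}\Big(\Eps{0}\g{0}\sum_{i=0}^{\op}\frac{1}{\Eps{i}}-\a_1\sum_{i=1}^{\op}\frac{1}{\Eps{i}}\Big)}{\Eps{0}\g{0}-\a_1}.
\end{equation*}
This reproduces the $\op=1$ value above; the prefactor $(\g{0}-2\a_1)^{\op-1}$ does not belong there.

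What survives unchanged is $\G{1}$, together with the equivalence $\G{\op+1}=0\iff\Eps{0}^2\g{0}+\a_1^2\eps{\op}=0$, i.e. the second equation of \eqref{sysa1a2}. That is all the paper uses downstream, and it is insensitive to the sign error.
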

Despite the fact that the proof should present the computation of all coefficients, we are interested in providing the procedure for $\G{\op+1}$ for what we want to cancel it by a suitable $\a_1$. The proof is presented in \ref{sec_app3}. Although the proof for other coefficients follows the same pattern as presented for $\G{\op+1}$. This proposition will allow us to generate, for any order $\op$, the algebraic formulas of $\{\G{i}\,\vert\,i\in\S{0}^{\op+1}\}$. The explicit formula of $\G{\op+1}$ for $\op\in\S{1}^{3}$ is presented in \ref{sec_app4}.

\section{Proof of \cref{comp_bdfk}}
\label{proof_theorem2}
In this Section, we state the elements that give us the $(\op+2)$\up{th} order of the approximation obtained by the twice composition of $\nf{\op}{}$ presented in \cref{sec-nov}.
It is based on the use of the difference operator defined by \cref{difference-operator} but in the adaptive version. As we start in the composition by finding $\ynmh$ using \cref{bdf_ynmh}, the associate difference operator $L_1$ is given below:
\begin{equation}
 L_1(\y,\t_n,\h_n) = \g{0}\y(\tnmh) + \sum\limits_{i=1}^{\op} \g{i}\y(\t_{n-i}) - \a_1\h_n \left.\frac{\d}{\d\t} \y(\t)\right\rvert_{\t=\tnmh}.
\end{equation}
After writing the Taylor series development of $\y(\tnmh)$, its derivative and $\y(\t_{n-i})$ in the vicinity of $\t_n$, as follows:
\begin{align}
\label{TSE_yni}
 \begin{aligned}
  \y(\tnmh) &=&  \sum\limits_{j=0}^{\infty} \frac{(-1)^j}{j!} \Eps{0}^j \h_n^j \left.\frac{\d^j}{\d\t^j} \y(\t)\right\rvert_{\t=\t_n}, \\
   \left.\frac{\d}{\d\t} \y(\t)\right\rvert_{\t=\tnmh} &=&  \sum\limits_{j=0}^{\infty} \frac{(-1)^j}{j!} \Eps{0}^j \h_n^j \left.\frac{\d^{j+1}}{\d\t^{j+1}} \y(\t)\right\rvert_{\t=\t_n} ,\\
  \y(\t_{n-i}) &=& \sum\limits_{j=0}^{\infty} \frac{(-1)^j}{j!} \Eps{i}^j \h_n^j \left.\frac{\d^j}{\d\t^j} \y(\t)\right\rvert_{\t=\t_n},
 \end{aligned}
\end{align}
we can express the difference operator $L_1$ in its Taylor development in $\t_n$:
\begin{equation}
\label{line-diff-bdfk}
 L_1(\y,\t_n,\h_n) = \sum\limits_{j=0}^{\infty}\E_j \h_n^j \left.\frac{\d^j}{\d\t^j} \y(\t)\right\rvert_{\t=\t_n},
\end{equation}
where
\begin{equation}
\E_j  \coloneqq \left\lbrace
\begin{array}{lr}
\sum\limits_{i=0}^{\op} \g{i}, & j=0\\
  \frac{(-1)^j}{j!}  \left(
 \sum\limits_{i=0}^{\op} \g{i} \Eps{i}^j + j\Eps{0}^{j-1} \a_1 \right) & j\geqslant 1.
 \end{array}
 \right.
\end{equation}
Using the relation between coefficients $\g{i}$ in system \eqref{linear_system_gi}, we have $\E_0=0$, and with \cref{lemma_2} we have $\E_j = 0, \forall j \in \S{1}^{\op}$, This is another way to see the $\op$\up{th} order of the scheme used in the first jump. It is also clear form \cref{lemma_2} that:
\begin{equation}
\label{Ek1}
 \E_{\op+1} = -\frac{\a_1^{\op+1}}{(\op+1)!}\prod\limits_{j=1}^{\op}\eps{j},
\end{equation}
which will be used to evaluate the error, according to formula (2.9) in \cite[page 372]{book:hairer}, as follows:
\begin{equation}
 \label{error_firstjump}
 \y(\tnmh) - \ynmh = \frac{1}{\g{0}}\E_{\op+1} \h_n^{\op+1} \left.\frac{\d^{\op+1}}{\d\t^{\op+1}} \y(\t)\right\rvert_{\t=\t_n} + O(\h_n^{\op+2}).
\end{equation}

After computing the approximation $\ynmh$, the formula of the difference operator regarding the \ac{BDFp} scheme used in the second jump is written as follows:
\begin{equation}
 L_2(\y,\t_n,\h_n) = \Gp{0} \y(\t_n) + \Gp{1}\ynmh + \sum\limits_{i=2}^{\op}\Gp{i}\y(\t_{n-i+1}) - \a_2 \h_n  \left.\frac{\d}{\d\t} \y(\t)\right\rvert_{\t=\t_n}.
\end{equation}
Using \cref{error_firstjump} we write $\ynmh$ in function of $y(\ynmh)$ and using \cref{TSE_yni}, we replace $\y(\tnmh)$ and $\y(\t_{n-i+1})$ in the above equation by their Taylor series expansion in the vicinity of $\t_n$. We write the difference operator $L_2$ in its Taylor development in $\t_n$ as follows:
\begin{equation}
\label{line-diff-bdfk1}
 L_2(\y,\t,\h_n) = \sum\limits_{j=0}^{\infty}\mE_j \h_n^j\left.\frac{\d^j}{\d\t^j} \y(\t)\right\rvert_{\t=\t_n}
\end{equation}
where the first $\op+1$ terms are given below:
\begin{align}
 \label{mE_j}
 \mE_j \coloneqq
 \left\lbrace
 \begin{aligned}
\sum\limits_{i=0}^\op \Gp{i} &,& j=0,\\
-\a_2 - \sum\limits_{i=0}^{\op-1} \Gp{i+1} \Eps{i}  &,& j=1,\\
\frac{(-1)^j}{j!}\sum\limits_{i=0}^{\op-1} \Gp{i+1} \Eps{i}^j  &,& j \in \S{2}^\op,\\
\frac{(-1)^j}{j!}\sum\limits_{i=0}^{\op-1} \Gp{i+1} \Eps{i}^j -\frac{\Gp{1}}{\g{0}}\E_{j} &,& j=\op +1,
 \end{aligned}
 \right.
\end{align}
By the fact that the second jump should be at least of order $\op$, we should have $\mE_j = 0$ for all $j\in \S{0}^{\op}$. This will allow us to find the formulas of coefficients $\{\Gp{i}\,\vert\,i\in\S{0}^{\op}\}$. Having $\a_2 \equiv \Eps{0}$, coefficients $\{\Gp{i}\,\vert\,i\in\S{0}^{\op}\}$ verify the system defined by the matrix ${\MAp}^{\op+2,\op+2}_{n,\op+1}$: the matrix resulting from deleting the last row and column from $\MAp_{n,\op+1}$, with the right-hand side obtained after omitting the last row of that in System \eqref{linear_system_Gi}. Using \cref{sysa1a2} and the following lemma, we show that
\begin{equation}
\label{EQ51}
\Gp{i} =\G{i}, \quad \forall i\in\S{0}^{\op}.
\end{equation}

\begin{lemma}
 \label{lemma_3}
Consider one having a linear system $\MA \sX=b$, where $\sX \ \coloneqq\  (\sx_1,\ldots,\sx_{\op+1})^\top$ and $\MA\in \GL{\op+1}{\mathds{C}}$.
If one defines a new linear system $\MA^{j,j}\sY=b^j$ where $\sY\ \coloneqq\ (\sy_1,\ldots,\sy_{\op})^\top$
and $b^{j}$ results from deleting the $j$\up{th} component from $b$. If $\sx_j = 0$, then
 \begin{equation*}
  \sy_i\equiv
  \begin{cases}
   \sx_i, & i\in\S{1}^{j-1},\\
   \sx_{i+1}, & i\in\S{j+1}^{\op}.
  \end{cases}
 \end{equation*}
\end{lemma}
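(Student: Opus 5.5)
The argument is a direct substitution followed by a uniqueness argument; no Cramer's rule or determinant identity is needed. Write $\MA=(a_{ik})$ with $i,k\in\S{1}^{\op+1}$, and let $\sX^{(j)}\in\mathds{C}^{\op}$ be the vector obtained from $\sX$ by deleting its $j$\up{th} component. Thus $\sX^{(j)}=(\sx_1,\ldots,\sx_{j-1},\sx_{j+1},\ldots,\sx_{\op+1})^\top$. Here $\MA^{j,j}$ is the matrix obtained by deleting the $j$\up{th} row and the $j$\up{th} column of $\MA$, which is how it is used for ${\MAp}^{\op+2,\op+2}_{n,\op+1}$. The goal is to show that $\sX^{(j)}$ solves the reduced system $\MA^{j,j}\sY=b^j$ and then to identify it with $\sY$. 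Read componentwise, this is exactly the claimed conclusion: $\sy_i=\sx_i$ for $i<j$ and $\sy_i=\sx_{i+1}$ for $i\geqslant j$. The second range in the statement should be read as $i\in\S{j}^{\op}$.

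\emph{Step 1.} Write out the $i$\up{th} equation of $\MA\sX=b$, namely $\sum_{k=1}^{\op+1}a_{ik}\sx_k=b_i$. Since $\sx_j=0$, the term with $k=j$ vanishes, so $\sum_{k\neq j}a_{ik}\sx_k=b_i$ for every $i\in\S{1}^{\op+1}$. The left-hand side involves only the entries of row $i$ outside column $j$, paired with the components of $\sX^{(j)}$. For each $i\neq j$, these entries form precisely the corresponding row of $\MA^{j,j}$, and $b_i$ is the corresponding component of $b^j$. Discarding the $j$\up{th} equation, we obtain $\MA^{j,j}\sX^{(j)}=b^j$.

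\emph{Step 2.} We conclude $\sY=\sX^{(j)}$ by uniqueness of the solution of the reduced system. This step is the only delicate point. Invertibility of $\MA$ alone does not guarantee invertibility of the principal submatrix $\MA^{j,j}$. So I would make explicit that $\MA^{j,j}$ is assumed nonsingular; this is implicit in speaking of "the" solution $\sY$. Under that assumption, Step 1 gives $\sY=(\MA^{j,j})^{-1}b^j=\sX^{(j)}$.

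In the application to \cref{EQ51}, $\MA=\MAp_{n,\op+1}$ and $j=\op+2$, i.e.\ the last row and column are deleted. The relevant coefficient is $\sx_{\op+2}=\G{\op+1}$, which vanishes by the choice of $\a_1$ in \cref{sysa1a2} together with the formula for $\G{\op+1}$ in \cref{prop1}. The reduced matrix ${\MAp}^{\op+2,\op+2}_{n,\op+1}$ is then the Vandermonde-type matrix in the nodes $\Eps{0},\ldots,\Eps{\op}$, bordered by the first row of ones and the first column $(1,0,\ldots,0)^\top$. Its determinant equals the Vandermonde determinant $\prod_{0\leqslant m<\ell\leqslant\op}(\Eps{\ell}-\Eps{m})$, which is nonzero because the nodes $\Eps{i}$ are pairwise distinct. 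So the hypothesis of Step 2 holds there, and \cref{EQ51} follows.
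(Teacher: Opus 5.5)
Your argument is correct and matches the paper's one-line proof: since $\sx_j=0$, drop the $j$\up{th} column's contribution and discard the $j$\up{th} equation, so the truncated vector solves $\MA^{j,j}\sY=b^j$. You also correctly make explicit two points the paper leaves implicit: uniqueness requires $\MA^{j,j}$ to be nonsingular, which does not follow from $\MA\in\GL{\op+1}{\mathds{C}}$, and the second index range should read $\S{j}^{\op}$.

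One slip is in your application paragraph. Deleting the last column of $\MAp_{n,\op+1}$ removes the $\Eps{\op}$ column, so the reduced matrix is the Vandermonde matrix in the nodes $0,\Eps{0},\ldots,\Eps{\op-1}$, with determinant $\prod_{i=0}^{\op-1}\Eps{i}\prod_{0\leqslant m<\ell\leqslant\op-1}(\Eps{\ell}-\Eps{m})$. Nonsingularity therefore also needs $\Eps{i}\neq0$, which holds because $\Eps{0}=1-\a_1\neq0$ and $\Eps{i}=1+\r{i}\geqslant1$ for $i\geqslant1$.
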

\cref{lemma_3} is obtained after omitting the term relative to $\sx_j$ in the original system and removing the $j$\up{th} equation. Back to the linear system generating $\{\Gp{i}\,\vert\,i\in\S{0}^{\op}\}$: it results after deleting the last row and last column from System \eqref{linear_system_Gi}. Having relation \eqref{sysa1a2}, thus $\G{\op+1}=0$ and we conclude \cref{EQ51}. To prove the order $\op+1$, we have to show that
$$\mE_{\op+1} = 0 ,$$
which is verified in the last equation in system \eqref{linear_system_Gi} after replacing $\E_{\op+1}$ by its formula in \cref{Ek1}.
To show the error constant, we have the following asymptotic formula:
\begin{equation}
\label{error-approx}
 \y(\t_n) - \hy_n = \frac{1}{\G{0}} \mE_{\op+2}\h_n^{\op+2} \left.\frac{\d^{\op+2}}{\d\t^{\op+2}} \y(\t)\right\rvert_{\t=\t_n} + \mathcal{O}(\h_n^{\op+3}).
\end{equation}
where:
\begin{align}
 \label{mEk2}
 \begin{aligned}
 \mE_{\op+2} &=\frac{(-1)^{\op+2}}{(\op+2)!} \sum\limits_{i=0}^{\op} \G{i+1}\Eps{i}^j  - \frac{\G{1}}{\g{0}}\E_{\op+2},\\
 &=\frac{(-1)^{\op+2}}{(\op+2)!} \left[\sum\limits_{i=0}^{\op} \left( \G{i+1} - \frac{\G{1}}{\g{0}} \g{i} \right) \Eps{i}^{\op+2}  + (\op+2)\a_1 \Eps{0}^{\op+1}\right].
\end{aligned}
\end{align}

In the composed flow, we consider that $\y_n = \Re(\hy_n)$, which means that the error is approximated by the real part of the right-hand side of \cref{error-approx} with:
\begin{equation}
 \Re\big(\y(\t_n)-\hy_n\big) = \y(\t_n)-\y_n \simeq \Re\left(\frac{\mE_{\op+2}}{\G{0}}\right)\h_n^{\op+2}\left.\frac{\d^{\op+2}}{\d\t^{\op+2}} \y(\t)\right\rvert_{\t=\t_n}.
\end{equation}
Taking the imaginary part of \cref{error-approx}, we reach the second conclusion of \cref{comp_bdfk} showing that:
\begin{equation}
 \y(\t_n)-\y_n \simeq \eC_\op \times \Im(\hy_n),
\end{equation}
with
\begin{equation}
\label{error_constant}
\eC_\op \coloneqq \Re\left(\frac{\mE_{\op+2}}{\G{0}}\right)\times \left[\Im\left(\frac{\mE_{\op+2}}{\G{0}} \right)\right]^{-1}.
\end{equation}
We end it with presenting \cref{alg:BDFk_BDFk} generating the composed flow $\nfc{\op+1}{\h_n}$.
\begin{algorithm}[!ht]
\caption{The composed numerical flow $\nfc{\op+1}{\h_n} (\mT_{n-1,\op},\Y_{n-1,\op})$}\label{alg:BDFk_BDFk}
\begin{algorithmic}
\Require $\h_n$
\Require $\t_n\gets \t_{n-1}+\h_n$
\ForAll{$j\gets 1 $ to $\op$}
    \State $\r{j}\gets\displaystyle \frac{\t_{n-1}-\t_{n-j}}{\t_n-\t_{n-1}}$
\EndFor
\Function{$\eps{\op}$}{$\a_1$}
    \State \Return $\displaystyle 1+ \frac{\r{\op}}{\a_1}$
\EndFunction
\Function{$\g{0}$}{$\a_1$}
    \State \Return  $\displaystyle \sum\limits_{j=1}^{\op} \frac{\a_1}{\a_1+\r{j}}$
\EndFunction
\State $\a_1 \gets$ \textsf{Find Roots} $[\a_1\longmapsto(1-\a_1)^2\g{0} + \a_1^2 \eps{\op}=0]$
\Ensure $\Re(\a_1)>0$
\State $(\Tnmh{\op},\Ynmh{\op}) \gets \nf{\op}{\a_1\h_n} (\mT_{n-1,\op},\Y_{n-1,\op})$
\State $(\mT^{\prime}_{n,\op},\Y^{\prime}_{n,\op}) \gets
 \nf{\op}{(1-\a_1)\h_n}\big(\Tnmh{\op},\Ynmh{\op})$
 \State \Return $(\mT_{n,\op},\Y_{n,\op}), \Im(\hy_n)$
\end{algorithmic}
\end{algorithm}

\section{Convergence error and CPU for fixed time steps}
\label{sec-conv}
To show graphically the convergence order of $\nfc{\op+1}{}$, we consider the case of a fixed time step ($\h_n=\h \Rightarrow  \r{j} = j-1)$. Thus, the solution of $\G{\op+1}$, denoted here by $\Gf{\op+1}$, as it does not depend on instant $\t_n$, is given for all $\op\in\S{1}^{4}$:

\begin{align*}
\Gf{1+1}&=\frac{(\a_1 - 1)(2\a_1^2 - 2\a_1 + 1)}{\a_1(2\a_1 - 1)},\\
\Gf{2+1} & = -\frac{(\a_1-1)(3\a_1^3 - \a_1^2 + \a_1 + 1)}{2(\a_1 + 1)(3\a_1^2 - 1)},\\
\Gf{3+1} & = \frac{(\a_1 - 1)(4\a_1^4 + 5\a_1^3 + \a_1^2 + 6\a_1 + 2)}{6(2\a_1 + 1)(\a_1 + 2)(\a_1^2 + \a_1 - 1)}, \\
 \Gf{4+1} &= -\frac{(\a_1-1)(5\a_1^5 + 19\a_1^4 + 19\a_1^3 + 19\a_1^2 + 28\a_1 + 6)}{4(\a_1 + 3)(5\a_1^4 + 20\a_1^3 + 15\a_1^2 - 10\a_1 - 6)}.
\end{align*}
When the function \textsf{Find Roots} comes, we use a computer algebra software. We exhibit below the explicit expression of $\a_1$, solution to $\Gf{\op+1}=0$, when $\op\in\{1,2,3\}$, and a numerical approximation for its solution when $\op=4$ (for the sake of simplicity):
\begin{align*}
 \a_1 &= \frac{1}{2}\pm \frac{\mi}{2}; &  \op&=1\\
 \a_1 & = \left(\frac{-b}{18} + \frac{4}{9b} +\frac{1}{9} \right) \pm {\mi\sqrt{3}}\left( \frac{b}{18} + \frac{4}{9b}\right); & \op&=2\\
 &\approx0.4013648789516588 \pm \mi\times 0.7409710153124752\\
 \a_1&=\frac{d}{48 c}-\frac{5}{16} \pm \frac{\mi}{2}\sqrt{\frac{2439 c}{32d} + c^2 + \frac{7}{144c^2} -\frac{43}{96}}; & \op&=3\\
 &\approx0.3247753916537674 \pm \mi\times 0.927940112670109\\
 \a_1 &\approx 0.2675589068337956 \pm \mi\times 1.088573443182903; & \op &=4
\end{align*}
with
\begin{align*}
 b &\ \coloneqq\ \left( \frac{2\sqrt{19}}{\sqrt{3}} - {134}\right)^{1/3},
 \\c &\ \coloneqq\  \frac{1}{2}\left( 2\sqrt{\frac{9931}{6}} + \frac{2179}{27}\right)^{1/6},
& d &\ \coloneqq\  \sqrt{576 c^4 + 129c^2 + 28}.
\end{align*}

\subsection{Global error}
We solve numerically the \ac{IVP} \eqref{diff-system} with $\f\big(\t,\y\big) = -\y^3$ over the interval $]0,1[$, with $\y(0) =1$ using a \ac{BDFp} scheme and its two compositions when $\op\in\S{1}^{4}$ for different time steps $\h={1}/{\N}, \N\in\{10,20,40,80,160\}$.
Knowing the exact solution $\y(\t) \equiv \sqrt{\frac{1}{1+2\t}}$, we use it to build approximations $\y_j$ on $\t_j = j\h, j\in\S{1}^{\op-1}$ as they are crucial for initializing $\nf{\op}{}$ and $\nfc{\op+1}{\h}$. The part of \textsf{Find Roots} in \cref{alg:BDFk_BDFk} is skipped at every time step. \cref{alg:BDFk_BDFk} is then simplified by the last five lines: we apply first $\nf{\op}{(1-\a_1)\h}$ then $\nf{\op}{\a_1\h}$ to produce $\hy_n$ and take its real part to store in $\y_n$ at every time step.
Here, the exact solution is also used to compute the exact error:
\begin{equation}
 \label{error_nfc}
 \me(\t_n) \ \coloneqq\ \| \y(\t_n)-\y_n\|,\quad n\in\S{1}^{\N}.
\end{equation}
The global error is computed by using the trapezoidal rule of integration as follows:
\begin{equation}
 \label{global_error_nfc}
 \E_{\N} \coloneqq \frac{1}{\N}\left(  \sum\limits_{n=\op}^{\N-1} \me(\t_n) +\frac{\me(\t_{\N})}{2}\right).
\end{equation}
For distinction, we denote by $\E_{\N}^{\nf{\op}{}}$ the global error associated to the approximation produced by the \ac{BDFp} scheme while we denote by $\E_{\N}^{\nfc{\op}{}}$ the one associated to the approximation produced by the composed flow of order $\op$.
We present in \cref{fig1} the comparison between $\E_{\N}^{\nf{\op}{}}$ and $\E_{\N}^{\nfc{\op+1}{}}$.
\begin{figure}[!ht]%
\centering
\includegraphics[width=0.4\textwidth]{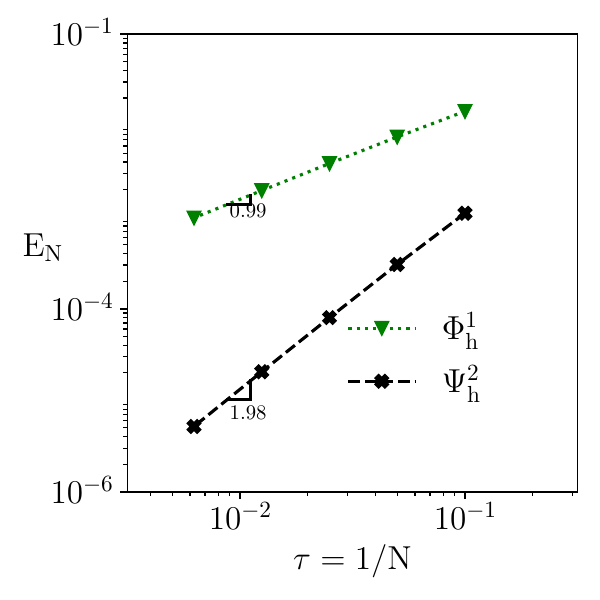}
\includegraphics[width=0.4\textwidth]{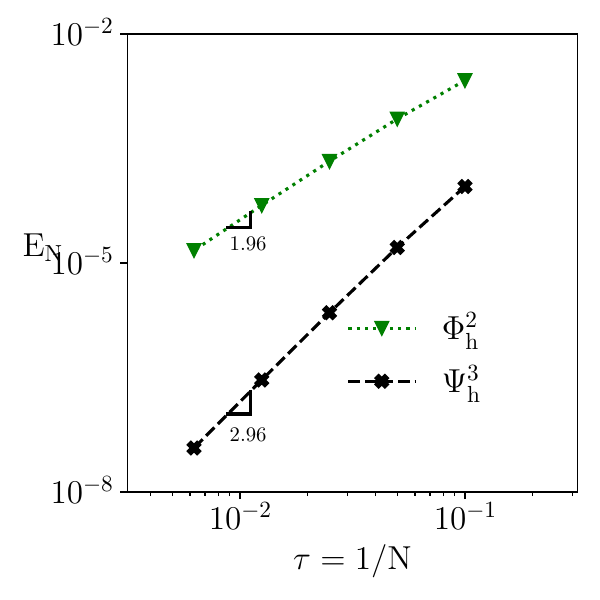}
\includegraphics[width=0.4\textwidth]{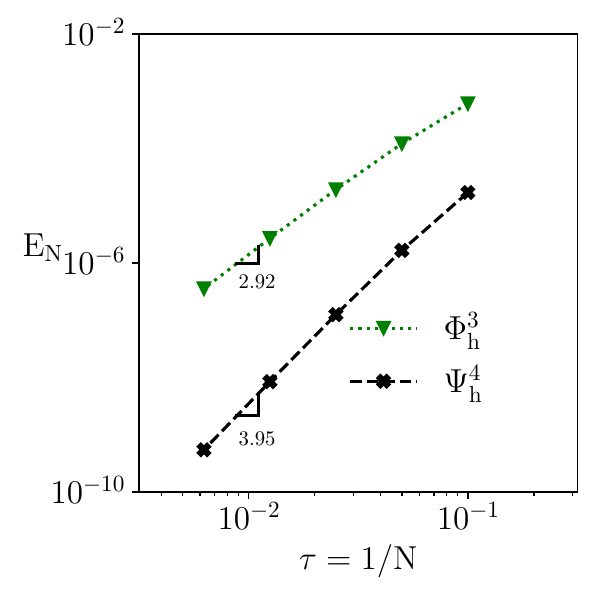}
\includegraphics[width=0.4\textwidth]{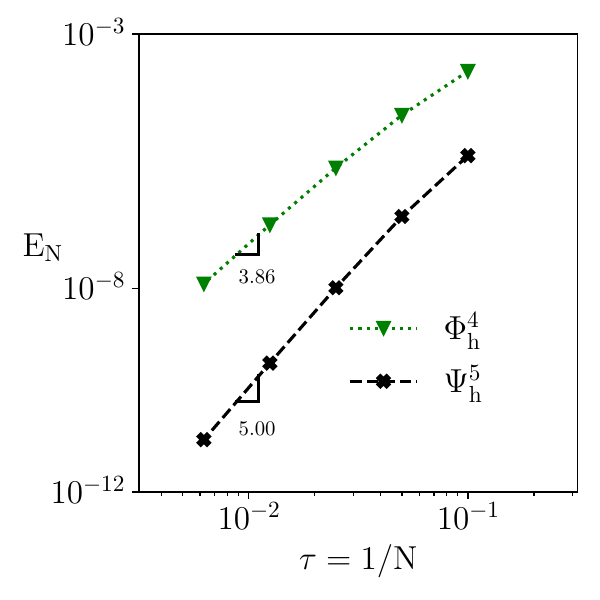}
\caption{Convergence errors of $\nf{\op}{}$ and its composed flow $\nfc{\op+1}{}$ for $\op\in\{1,2,3,4\}$.}\label{fig1}
\end{figure}

We mention that the slope for $\nf{2}{}$, the second order \ac{BDF} scheme, is $\simeq1.96$ while its twice composition $\nfc{3}{}$ designed by \cref{alg:BDFk_BDFk} has a slope of $\simeq2.96$, showing an increase by one order of convergence. Other convergence rates for \ac{BDFp} schemes and their twice compositions can be checked in the same figure. We conclude that the composition designed in this work increases by one the accuracy order. To check the utility of this composition in using less number of backward points while producing an approximation with the same order, we present in \cref{tab4_1} the global error between the classical \ac{BDFp} scheme using $\op$ backwards points and the proposed composed flow having the same order $\op$ while using $\op-1$ backward points. The comparison is done for $\op =2,3,4,5$.

\begin{table}[!ht]
\begin{center}
\begin{minipage}{\textwidth}
\caption{Comparison between the global error $\E_\N^{\nf{\op}{\h}}$ and $\E_\N^{\nfc{\op}{\h}}$ for different values $\h$.}\label{tab4_1}
\begin{tabular*}{\textwidth}{@{\extracolsep{\fill}}l?ccccc@{\extracolsep{\fill}}}
\toprule%
$\h$ & $10^{-1}$    &   $5\times10^{-2}$ & $2.5\times10^{-2}$  & $1.25\times10^{-2}$ & $6.25\times10^{-3}$ \\
\midrule
$\E_\N^{\nf{2}{}}$ & $2.46\times10^{-3}$ & $7.73\times10^{-4}$ & $2.15\times10^{-4}$ & $5.68\times10^{-5}$ & $1.45\times10^{-5}$ \\
$\E_\N^{\nfc{2}{}}$ & $1.10\times10^{-3}$ & $3.04\times10^{-4}$  & $7.99\times10^{-5}$ & $2.04\times10^{-5}$ &  $5.18\times10^{-6}$  \\
\midrule
$\E_\N^{\nf{3}{}} $ & $6.08\times10^{-4}$ & $1.21\times10^{-4}$ & $1.91\times10^{-5}$ & $2.68\times10^{-6}$ & $3.56\times10^{-7}$ \\
$\E_\N^{\nfc{3}{}}$ & $1.00\times10^{-4}$ & $1.59\times10^{-5}$ & $2.22\times10^{-6}$ & $2.93\times10^{-7}$ & $3.75\times10^{-8}$  \\
\midrule
$\E_\N^{\nf{4}{}} $ & $1.85\times10^{-4}$  & $2.53\times10^{-5}$ & $2.33\times10^{-6}$ & $1.78\times10^{-7}$ & $1.22\times10^{-8}$ \\
$\E_\N^{\nfc{4}{}}$ & $1.70\times10^{-5}$& $1.66\times10^{-6}$ & $1.24\times10^{-7}$ & $8.41\times10^{-9}$ & $5.43\times10^{-10}$ \\
\midrule
$\E_\N^{\nf{5}{}} $&$6.41\times10^{-5}$& $6.46\times10^{-6}$& $3.60\times10^{-7}$& $1.51\times10^{-8}$& $5.52\times10^{-10}$\\
$\E_\N^{\nfc{5}{}}$ &$4.06\times10^{-6}$& $2.58\times10^{-7}$& $1.02\times10^{-8}$& $3.39\times10^{-10}$& $1.06\times10^{-11}$\\
\bottomrule
\end{tabular*}
\end{minipage}
\end{center}
\end{table}

We conclude from \cref{tab4_1} that the composed flow of order $\op$, $\nfc{\op}{}$, has better accuracy than the one of \ac{BDFp}, $\nf{\op}{}$, with the same order, which means that it has a smaller error constants in the error estimation. This enables us to produce a higher-quality approximation while using a smaller number of backward points.
But how will this increase in order affect the computational cost involving real and complex parts? Does the composed numerical flow $\nfc{\op}{}$ have a low performance compared to the numerical flow of the \ac{BDFp} having the same order $\nf{\op}{}$?

\subsection{CPU comparison}

To check the computational efficiency of the composition, whether or not involving computation in real and complex parts will significantly increase the computational cost, we compare in \cref{tab4_2} the ratio of the global errors and the ratio of CPU time between $\nf{\op}{}$ and $\nfc{\op}{}$ for a set of time steps:
\begin{align}
 \label{ratio_CPU_err}
 \R_{\E_\N} & = \frac{\E_\N^{\nf{\op}{}}}{\E_\N^{\nfc{\op}{}}},& \R_{\rm CPU} &= \frac{\rm CPU^{\nf{\op}{}}}{\rm CPU^{\nfc{\op}{}}}
\end{align}

As for instance, the global error of $\nfc{\op}{}$ is smaller with a factor of $2.234$ compared to $\nf{\op}{}$ when $\op=2$ and $\h = 10^{-1}$ and the rate can go up to a factor of $52$ when $\op=5$ and $\h = 6.25\times10^{-3}$, as presented in \cref{tab4_2}. This increase in accuracy comes at a computational cost, as it involves computations in both real and complex parts. To check this, we present in the same table (\cref{tab4_2}) the CPU ratio $\R_{\rm CPU}$. We can see that, for $\op=2$ and $\h=0.1$, the decrease in the error by a factor of $2$ using $\nfc{2}{}$ implies an increase in CPU time by almost the same factor. This factor decreases when $\h$ decreases. For higher order $\op$ and smaller time steps, the increasing factor of CPU decreases and almost tends to 1, meaning that the computational time is the same for a given time step, while the error decreases faster, allowing higher precision going up to a factor of $52$ when $\op=5$ and $\h=0.00625$.
\begin{table}[!ht]
\begin{center}
\begin{minipage}{\textwidth}
\caption{Ratio of the global error and the CPU time between $\nf{\op}{\h}$ and $\nfc{\op}{\h}$.}\label{tab4_2}
\begin{tabular*}{\textwidth}{@{\extracolsep{\fill}}rl?ccccc@{\extracolsep{\fill}}}
\toprule%
&$\h$ & $10^{-1}$    &   $5\times10^{-2}$ & $2.5\times10^{-2}$  & $1.25\times10^{-2}$ & $6.25\times10^{-3}$ \\
\midrule
\multirow{2}{*}{$\op=2$} &$\R_{\E_\N}$ &2.234 & 2.539& 2.695& 2.775& 2.816 \\
&$\R_{\rm CPU}$& ${1}/{2.343}$&  ${1}/{2.295}$& ${1}/{2.291}$& ${1}/{2.242}$& ${1}/{1.625}$\\
\midrule
\multirow{2}{*}{$\op=3$} &$\R_{\E_\N}$&6.0582 & 7.629& 8.607& 9.172& 9.480 \\
&$\R_{\rm CPU}$& ${1}/{2.225}$&  ${1}/{2.037}$& ${1}/{1.915}$& ${1}/{2.103}$& ${1}/{1.968}$\\
\midrule
\multirow{2}{*}{$\op=4$} &$\R_{\E_\N}$&10.890 & 15.266& 18.734& 21.150& 22.626 \\
&$\R_{\rm CPU}$& ${1}/{1.906}$&  ${1}/{1.689}$& ${1}/{1.486}$& ${1}/{1.558}$& ${1}/{1.647}$\\
\midrule
\multirow{2}{*}{$\op=5$} &$\R_{\E_\N}$&15.773 & 24.966& 35.055& 44.662& 52.073 \\
&$\R_{\rm CPU}$& ${1}/{1.680}$&  ${1}/{1.372}$& ${1}/{1.321}$& ${1}/{1.328}$& ${1}/{1.168}$\\
\bottomrule
\end{tabular*}
\end{minipage}
\end{center}
\end{table}

We present in \cref{fig1_1} the plot of CPU time versus the associated global precision for both the classical \ac{BDFp} scheme, $\nf{\op}{}$, and the proposed composed flow $\nfc{\op}{}$ having the same order. Note that the composed flow with order $\op$ prescribes $\op-1$ backward points. We can see that the computational efficiency of the composed flow is the same when $\op=3$, allowing us to reach the same accuracy with fewer backward points, and it is even better for higher orders.
\begin{figure}[!ht]%
\centering
\includegraphics[width=0.4\textwidth]{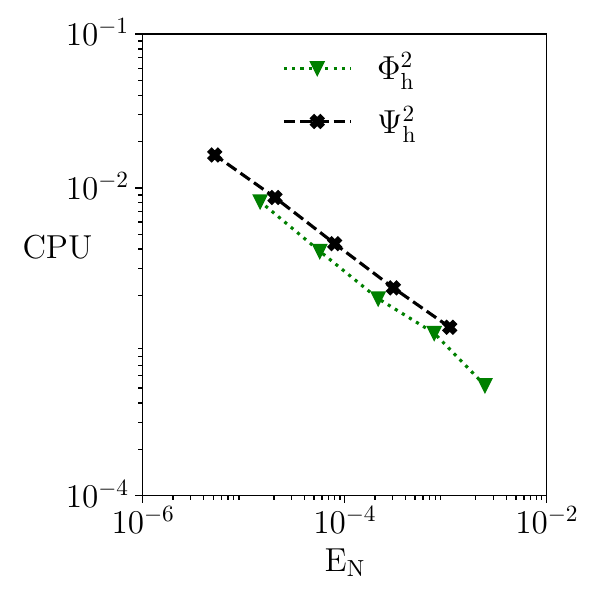}
\includegraphics[width=0.4\textwidth]{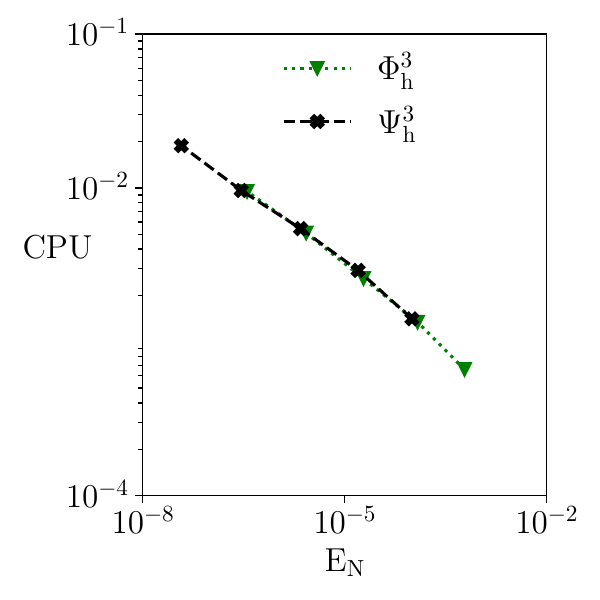}
\includegraphics[width=0.4\textwidth]{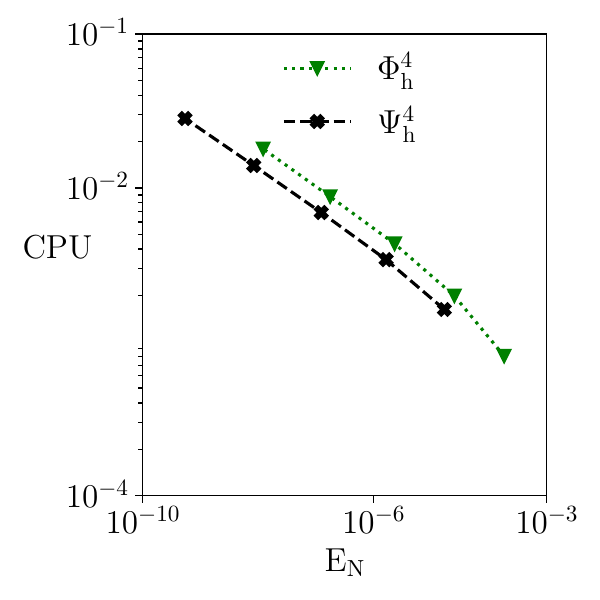}
\includegraphics[width=0.4\textwidth]{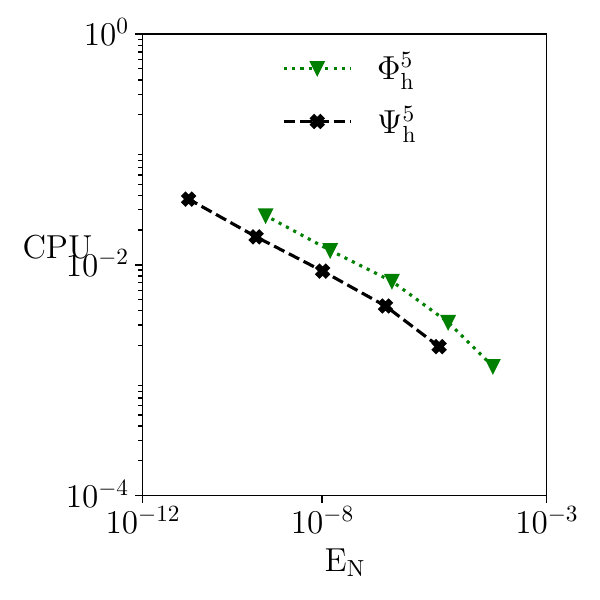}
\caption{CPU versus global errors of $\nf{\op}{}$ and the composed flow $\nfc{\op}{}$ for $\op\in\{2,3,4,5\}$.}\label{fig1_1}
\end{figure}
In the next Section, we investigate the linear stability of $\nfc{\op+1}{}$ and see how the barrier of order six of \ac{BDF} schemes is broken.

\section{Linear stability analysis}
\label{sec-stab}
The study of the stability of a linear multi-step method is conducted through its generating polynomials, as defined in \cref{generating-pol}. However, it is difficult to show the generating polynomials of the designed numerical method in this work. This is because an iterative solver is used to obtain an intermediate solution. This is why we will focus solely on the $\mA-$stability. To do that, we consider the simplest linear scalar problem $\dot{\y} = \lambda \y$, where the numerical integration using $\nfc{\op+1}{\h}$ leads us to the following relation:
\begin{equation*}
\sum\limits_{i=0}^{\op} \Theta_i\y_{n-i}=0
\end{equation*}
with
\begin{equation*}
 \Theta_i=:
 \left\lbrace
 \begin{array}{ll}
 (\a_1z - \g{0}).(\Gp{0} - (1 - \a_1)z),&i=0,\\
 \Gp{1}\g{i} + (\a_1z - \g{0})\Gp{i+1},&i\in\S{1}^{\op-1},\\
 \Gp{1}\g{\op}, &i = \op,
 \end{array}
 \right.
\end{equation*}
where $z\ \coloneqq\ \h\lambda$. We define the polynomial $p_\op(\omega,z)$ associated to $\nfc{\op+1}{}$ by the following formula:
\begin{equation}
 p_\op\big(\omega,z\big) = \sum\limits_{i=0}^{\op} \Theta_{\op-i}\omega^{i}.
\end{equation}

Consider $\omega_1(z),\ldots,\omega_\op(z)$ the $\op$ roots of $p_\op\big(\omega,z\big)$ that depend on $z$. The stability region of the numerical scheme $\nfc{\op+1}{}$ is the following region defined in the complex plane:
\begin{equation}
 \mathcal{D}_{\nfc{\op+1}{}} = \left\lbrace  z\in \mathds{C}\, \Big\rvert \,\lvert \omega_i(z)\rvert \leqslant1, \forall\,i\in\S{1}^{\op}   \right\rbrace.
\end{equation}
According to Lemma 4.7 in \cite{iserles_2008}, we have the following:
\begin{definition}
The composed scheme $\nfc{\op+1}{}$ is $\mA(\vartheta)-$stable if:
$$S_{\vartheta} \subseteq \mathcal{D}_{\nfc{\op+1}{}},$$
where $S_{\vartheta}$ is a sector of angle $2\vartheta$ bisected by the negative real axis. In addition, if $\vartheta=90^\circ$, the composed scheme is said to be $\mA-$stable.
\end{definition}
For every $\forall \,\op\in\S{1}^{8}$, the polynomial $p_\op\big(\omega,z\big)$ is constructed, then a mesh-grid of $z$ values is built in the complex plane, where for every value $z_{i,j}$, roots $\omega_{i,j,s}, s\in\S{1}^{\op}$ of $p_\op\big(w,z_{i,j}\big)=0$ are obtained numerically using the \textsf{NumPy} package in \textsf{Python} in order to check those with absolute value that is smaller than one. The domain of stability is the set of all values $z_{i,j}$, where $\lvert \omega_{i,j,s}\rvert \leqslant1$.

\begin{figure}[!ht]%
\centering
\includegraphics[width=0.45\textwidth]{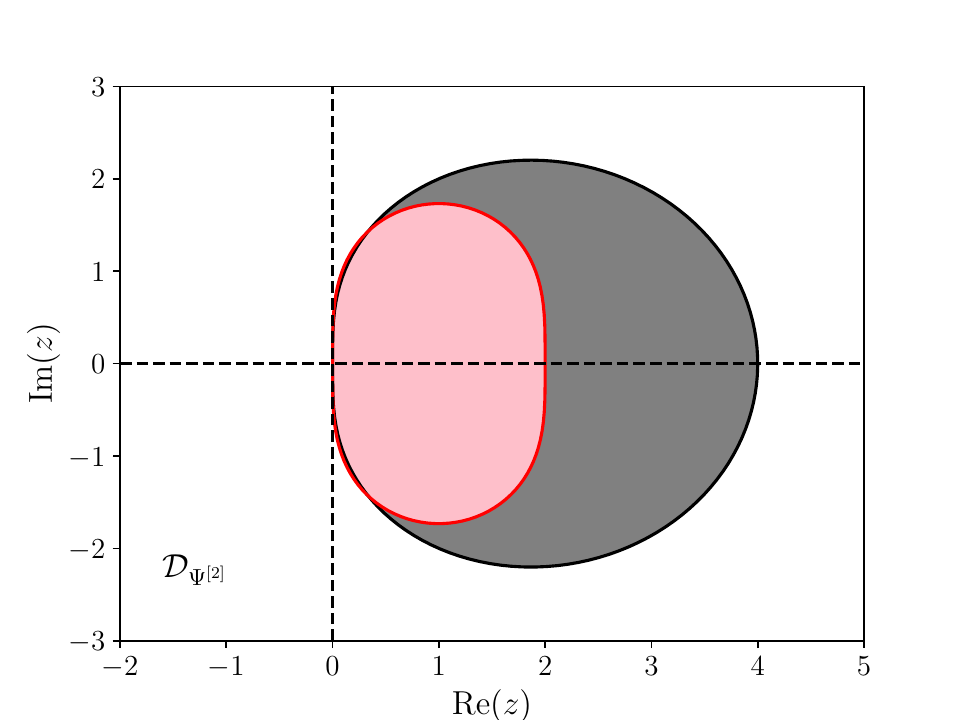}
\includegraphics[width=0.45\textwidth]{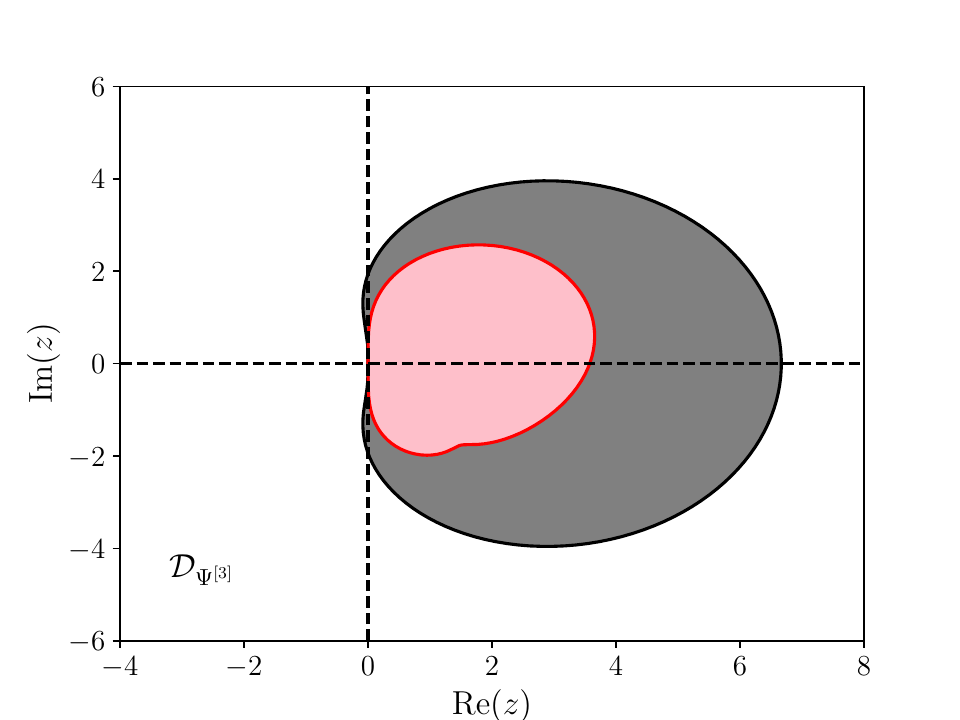}
\includegraphics[width=0.45\textwidth]{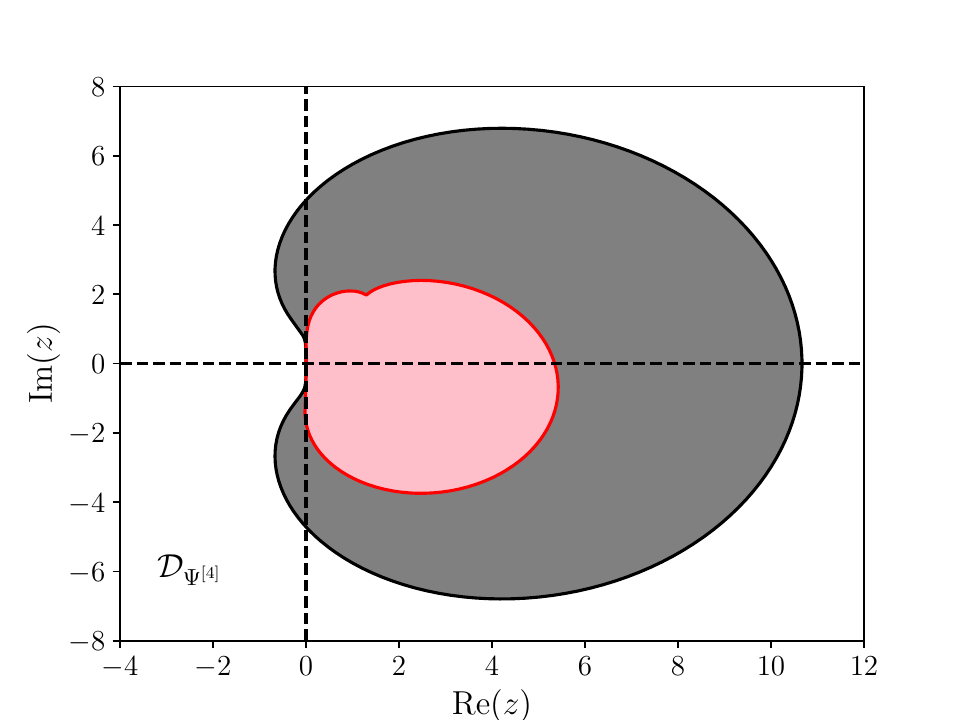}
\includegraphics[width=0.45\textwidth]{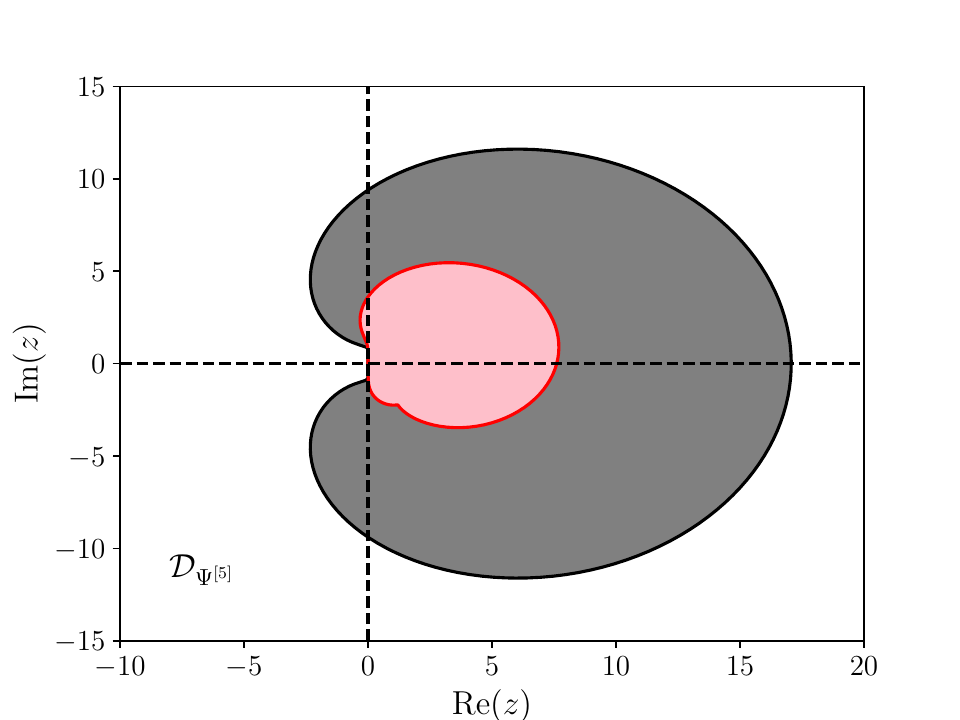}
\caption{The grey colored \textcolor{gray}{region} showed the unstable region of the \ac{BDFp} scheme of order $\op\in\{2,3,4,5\}$, while the pink colored \textcolor{pink}{region} shows the unstable region of the twice composition of the \ac{BDF}($\op-1$) scheme which is of order $\op$.}
\label{fig2}
\end{figure}

We plot the domain $\mathcal{D}_{\nfc{\op+1}{}}$ for different $\op\in\S{1}^{4}$ in \cref{fig2} and $\op\in\S{5}^{8}$ in \cref{fig3}. The black solid lines are the roots locus of the $\nf{\op}{}$ schemes and the red ones are the locus of roots to $\nfc{\op}{}$. The outer domain of these roots locus represents the stable region of the scheme. We recall here that the numerical scheme $\nfc{\op+1}{}$ is of order $\op+1$. Though the comparison of the stability region is done between two schemes of the same order: $\nf{\op}{}$ and $\nfc{\op}{}$. We can see from these pictures that $\mathcal{D}_{\nf{\op}{}}  \subseteq\mathcal{D}_{\nfc{\op}{}}$ for $\op\in\S{3}^{9}$.
\begin{figure}[!ht]%
\centering
\includegraphics[width=0.45\textwidth]{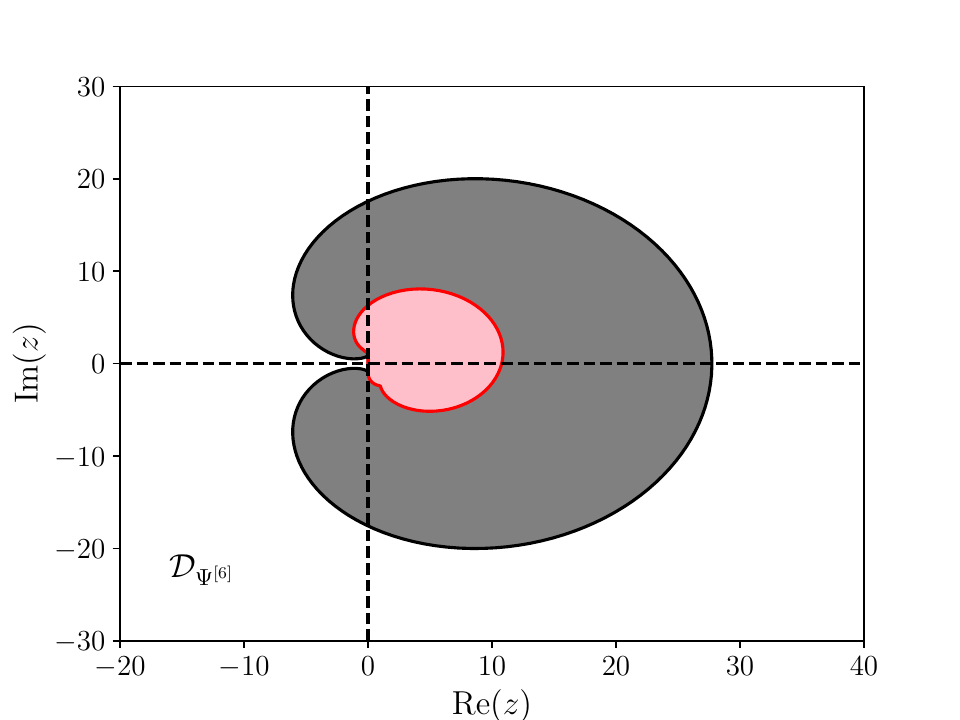}
\includegraphics[width=0.45\textwidth]{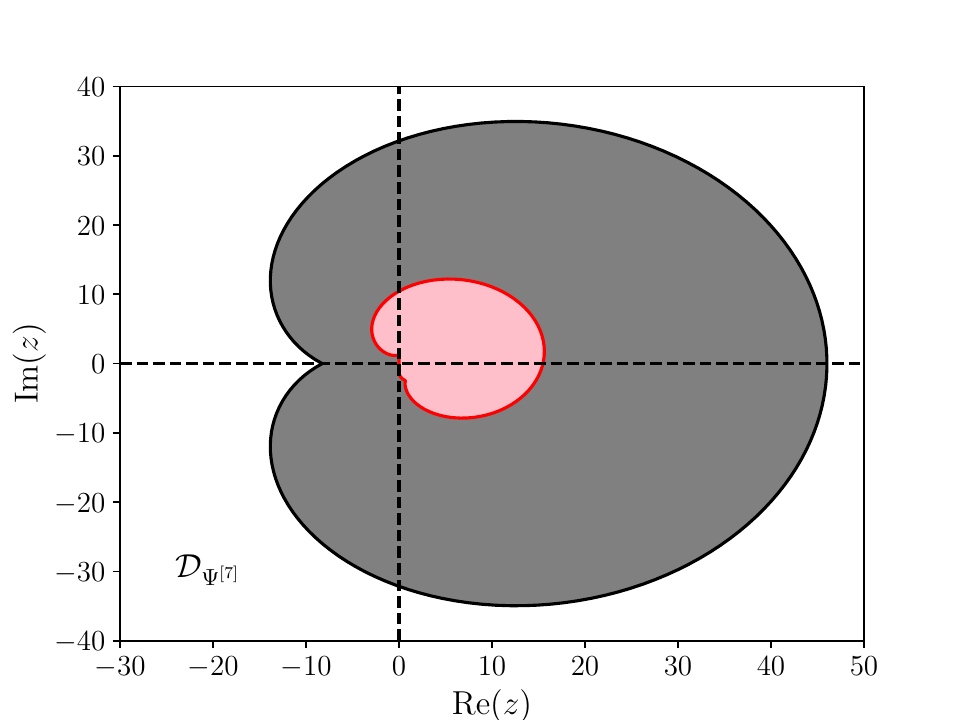}
\includegraphics[width=0.45\textwidth]{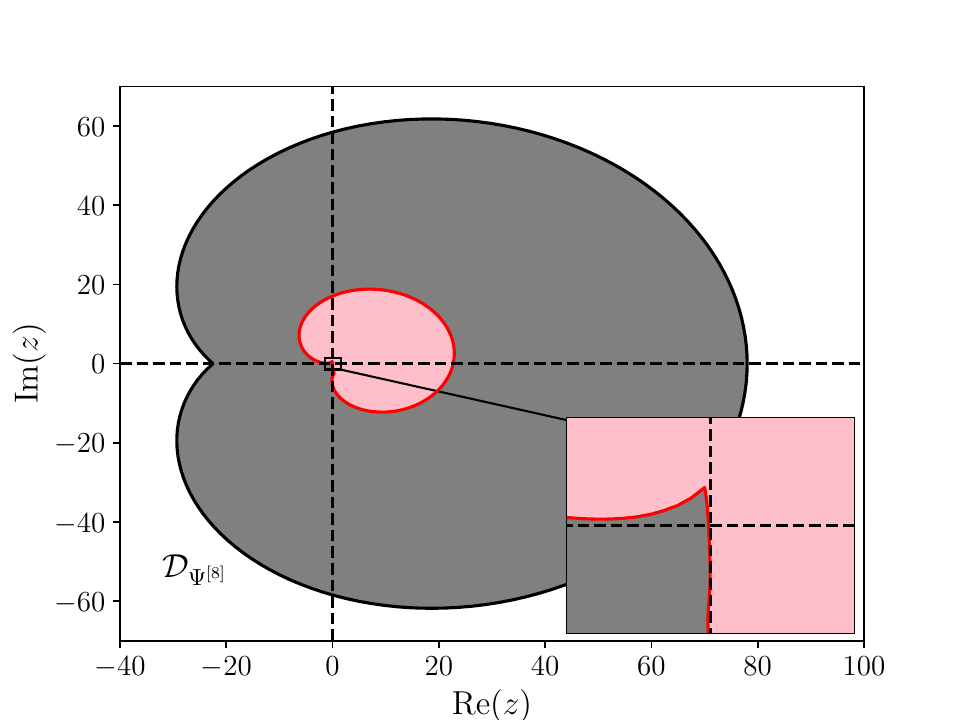}
\includegraphics[width=0.45\textwidth]{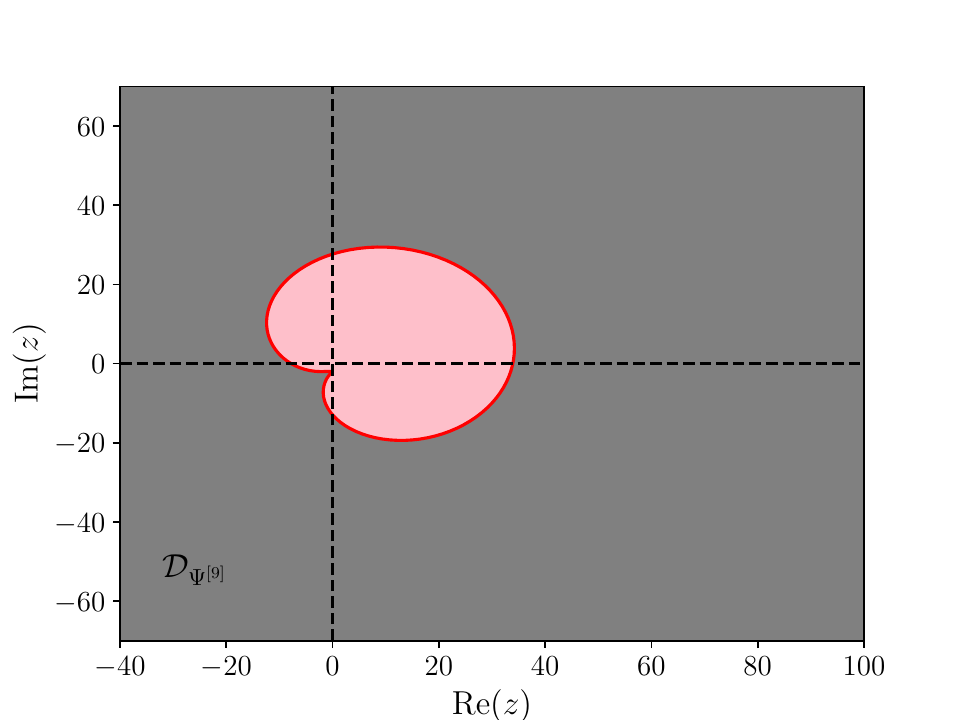}
\caption{The grey colored \textcolor{gray}{region} showed the unstable region of the \ac{BDFp}, $\op\in\{6,7,8,9\}$, while the pink colored \textcolor{pink}{region} shows the unstable region of the twice composition of the \ac{BDF}($\op-1$) which is of order $\op$.}
\label{fig3}
\end{figure}

In addition, the composed schemes of order $\op=2,3,4$ are stable, and those of order $\op\in\S{5}^{8}$ are $\mA(\vartheta)$-stable as can be seen in the shaded region of stability and the zoom in the bottom left subplot in \cref{fig3}. This in not the case for $\nf{\op}{}$ for $\op>6$.
To check the angles of stability for the composed flow $\vartheta_{\nfc{\op}{}}$, we have followed the same strategy as in \cite{gander-20} to compute them. These angles of stability regions are presented in \cref{tab4} with the ones of the $\vartheta_{\nf{\op}{}}$ for the classical \ac{BDF} schemes.
\begin{table}[!ht]
\begin{center}
\begin{minipage}{\textwidth}
\caption{Angles of the $\mA(\vartheta)-$stability domain}\label{tab4}
\begin{tabular*}{\textwidth}{@{\extracolsep{\fill}}l?cccccccc@{\extracolsep{\fill}}}
\toprule%
$\op$ & 2 & 3 & 4 & 5 & 6 & 7 & 8 &9 \\
\midrule
$\vartheta_{\nfc{\op}{}} $ & $90^{\circ}$ &90&90 & 81.511 & 67.796 & 45 & 4.146 & -  \\
$\vartheta_{\nf{\op}{}}$& $90^{\circ}$ & 86.032 &73.351& 51.839 & 17.839 & -&-&-\\
\bottomrule
\end{tabular*}
\end{minipage}
\end{center}
\end{table}

In conclusion to this section, the composition of \ac{BDFp} schemes enables us to break the Dahlquist barrier, so we can utilize higher-order \ac{BDF} families of schemes.

\section{Variable time step}
\label{sec_vts}
In this section, we explore the feasibility of implementing variable time stepping. Specifically, we examine whether certain time step values $\h_n$ are constrained to ensure the existence of a single root of the equation $\G{\op+1}=0$ with a real positive part. As System \eqref{sysa1a2} depends only on $\r{i},i\in\S{1}^{\op}$, the question to ask: is there a limit, either upper or lower, on the values of $\r{i}$ that must not be surpassed to guarantee the time marching? We will address this inquiry in the subsequent discussion. Contrary to the upper bounds restricted to the ratio of two adjacent time step found in \cite{Liao-20,Li-22} for keeping the stability of the variable time stepping of the second and third \ac{BDF} schemes, we find in our analysis lower bounds for adaptive time step when the composed flow of \ac{BDFp} schemes ($\op \in \S{2}^{8}$) is applied.

To begin, we will mention that $\a_1=1/2 \pm \mi/2$ are the roots of $\G{1+1} = 2\a_1^2 -2\a_1 +1=0$ when $\op=1$, which means that it does not depend at all on the ratio of time steps. Thus, the composed flow of order 2 has no lower bound on the ratio. In the case of $\op=2$, $\a_1$ is the root of
$$3\a_1^3 + \a_1^2(3\r{2}-4) + \a_1(\r{2}^2 - 2\r{2} + 2) + \r{2}=0,$$
where the algebraic expression of its roots as a function of $\r{2}$ is complicated. Therefore, finding the algebraic dependence of roots on $\r{2}$ to find where they have a positive real part becomes difficult to express. It is even more complicated for higher $\op$. Therefore, we conduct a numerical investigation to determine whether there is a barrier on $\h_n$ that prevents time marching, specifically the positive real part of $\a_1$ for adaptive time steps.

\subsection{The first step}
Consider first that, for a given $\op$, $\{\t_{i}\,\vert\,i\in\S{1}^{\op-1}\}$ are equidistant ($\h_i = \h, \forall i\in\S{1}^{\op-1}$).
When finding the roots of $\G{\op+1}=0$, we select among them the one with a positive real part to ensure marching in time.
To check this, we select a range of values of $\h_\op>0$ where, for every value, we calculate $\{\r{i}\,\vert\,i\in \S{1}^{\op}\}$ and then establish $\G{\op+1}$ to find its roots and check if there is one such that $\Re(\a_1)>0$.
Note that $r_{\left\lbrace 2\rvert \op\right\rbrace} \equiv \cfrac{\h}{\h_\op}$, which reflects the ratio of two adjacent time steps.
This aims to find a bound on two consecutive time steps, for which at least one of the roots of $\G{\op+1}=0$ has a positive real part. This is done numerically. We found that a lower bound on the ratio should be applied, as for instance when applying the double composition of \ac{BDF}2 -which is of order 3- and having two approximations to backwards points  $\t_0$ and $\t_0+\h_1$, the time step $\h_2$ must verify $\h_2 \geqslant 0.4506\times \h_1$ to ensure the time marching of the composition in having positive value in the real part of $\a_1$. Another example is illustrated when using the double composition of \ac{BDF}3- which is of order 4- and having approximations of backwards points $\t_0, \t_0+\h, \t_0+2\h$, the time step $\h_3$ must verify $\h_3\geqslant 0.6311 \h$.
We note that the lower bound increases as $\op$ increases. The values of the lower bounds for the rest of the composed flow are reported in \cref{tab5}.
\begin{table}[!ht]
\begin{center}
\begin{minipage}{\textwidth}
\caption{Lower bounds of $\h_\op/\h$.}\label{tab5} 
\begin{tabular*}{\textwidth}{@{\extracolsep{\fill}}l?ccccccc@{\extracolsep{\fill}}}
\toprule%
$\op$ & 2 & 3 & 4 & 5 & 6 & 7 & 8 \\
\midrule
$1/r_{\left\lbrace 2\rvert \op\right\rbrace}$ & 0.4506& 0.6311&0.7158&0.7717&0.8125&0.8454&0.8734 \\[10pt]
$1/\sqrt[2\op-3]{2}$ & 0.5&0.7937&0.8705&0.9057&0.9258&0.9389&0.9480 \\
\bottomrule
\end{tabular*}
\end{minipage}
\end{center}
\end{table}

For security and simplicity, we can generate the following rule of thumb to produce a lower bound of two adjacent time steps:
$$\frac{1}{r_{\left\lbrace 2\rvert \op\right\rbrace}} = \frac{\h_\op}{\h} \geqslant\frac{1}{\sqrt[2\op-3]{2}}, \quad \op\in\S{2}^{8}.$$
For additional information, we plot in \cref{fig8} the relation between $\h_\op$ and the real part of $\a_1$, a root of $\G{\op+1}$ for $\op\in\S{1}^{8}$ in the first step.
\begin{figure}[!ht]
\centering
\includegraphics[width=0.7\textwidth]{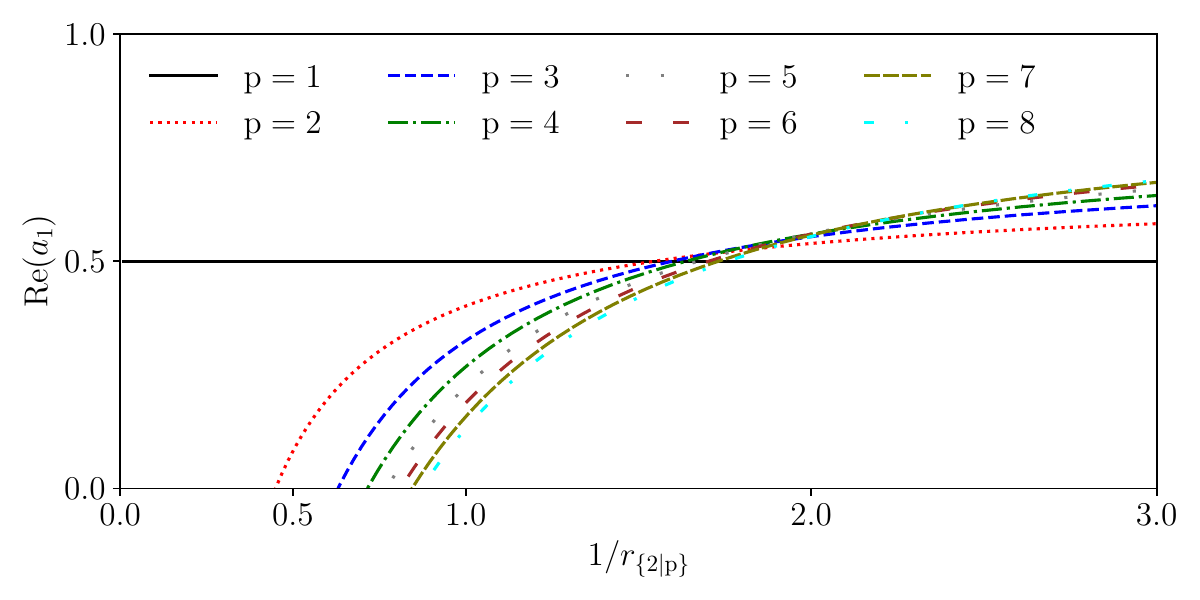}
\caption{Plot of the real part of $\a_1$ solutions of system \eqref{sysa1a2} for a range of values of $1/r_{\left\lbrace 2\rvert \op\right\rbrace}$.}
\label{fig8}
\end{figure}


\paragraph{Intermediate conclusions} Consider one has $\op$ starting points $\y_j$ approximating $\y(\t_j)$ for $\t_j = j\h, \forall \,j \in\S{0}^{\op-1}$ up order $\op$ and want to compose the numerical flow $\nf{\op}{}$. At the first step of computing an approximation $\y_\op$ to $\y(\t_\op \ \coloneqq\  \t_{\op-1}+\h_\op)$, the time step $\h_\op$ should be selected such that:
\begin{equation}\label{low_bound_h_k}
 \h_\op \geqslant
 \begin{cases}
  0& \op=1,\\
\displaystyle\frac{\h}{\sqrt[2\op-3]{2}}& \op\in \S{2}^{8}.
 \end{cases}
\end{equation}

\subsection{Simulation at the $n$\up{th} step}
Once the solution is approximated on $\t_\op = \t_0 +(\op-1)\h + \h_\op$ with $\h_\op$ verifying inequality \eqref{low_bound_h_k}, selecting the next variable time step size is crucial. For instance, users may need to decrease it for stiff dynamical problems or increase it if the system is not stiff. In the case when $\op=1$, and as it was shown before, there is no need to take care of the lower bound.

If $\op=2$, we can conclude according to \cref{tab5} and by induction, that the new time step should verify the following inequality to ensure a positive real part of the root:
\begin{equation}
\text{if}\,\,\op=2 \quad  \Longrightarrow \quad \h_n\geqslant 0.4506\times {\h_{n-1}},\quad  \forall \,n \in \mathds{Z}^+.
\end{equation}
For the numerical flow $\nfc{\op}{}$ with $\op\geqslant 3$, finding the limit bound of the time step is more complex. Thus, we follow the same procedure we applied in the first step. Results are reported in \cref{tab6}
\begin{table}[!ht]
\begin{center}
\begin{minipage}{\textwidth}
\caption{Lower bound of $\h_{\op+1}/\h_\op$ while $\h_\op\geqslant\h/\sqrt[2\op-3]{2}$.}\label{tab6}
\begin{tabular*}{\textwidth}{@{\extracolsep{\fill}}l?ccccccc@{\extracolsep{\fill}}}
\toprule%
$\op$ & 2 & 3 & 4 & 5 & 6 & 7 & 8 \\
\midrule
${1}/{r_{\left\lbrace 2\rvert \op+1\right\rbrace}}$ & 0.4506 & 0.6951 & 0.7763 & 0.8245 & 0.8613 & 0.0.8897 & 0.9131\\
\bottomrule
\end{tabular*}
\end{minipage}
\end{center}
\end{table}

We see that the values of the lower bounds of the ratio between the new time step $\h_{\op+1}$ and the last one $\h_\op$ increase, tending to unity, when $\op$ increases.
This will reduce the range of possibilities to decrease the time step, if needed, when higher-order schemes are employed for stiff dynamical problems involving different time scales. However, this may not limit their applications, as higher-order numerical schemes can capture the dynamics with higher precision, allowing for larger time steps.
We continue searching for the lower bound of the new time step $\h_{n+1}$ according to the last one $\h_n$. The following rule of thumb, as reported in \cref{lower_limit}, could be applied, as reported in \cref{tab7}.
\begin{equation}
 \label{lower_limit}
 \h_{n+1}\geqslant\frac{\h_n}{\sqrt[2\op-3]{2}}, \quad \forall\, \op \in \S{3}^{8}.
\end{equation}
\begin{table}[!ht]
\begin{center}
\begin{minipage}{\textwidth}
\caption{Lower bound of $\h_{n+1}/\h_{n}$, prescribed to $\nfc{\op+1}{~}$ while having $\h_{n}\geqslant\h_{n-1}/\sqrt[2\op-3]{2}$.}\label{tab7}
\begin{tabular*}{\textwidth}{@{\extracolsep{\fill}}l?ccccccc@{\extracolsep{\fill}}}
\toprule%
$\op$ & 2 & 3 & 4 & 5 & 6 & 7 & 8 \\
\midrule
${1}/{r_{\left\lbrace 2\rvert n+1\right\rbrace}}$ & 0.4506 & 0.6951 & 0.8063 & 0.8781 & 0.9193 & 0.9483 & 0.9709\\[5pt]
$1/\sqrt[2\op-3]{2}$ & 0.5 & 0.7937 & 0.8705  & 0.9057 & 0.9258 & 0.9389 & 0.9480\\
\bottomrule
\end{tabular*}
\end{minipage}
\end{center}
\end{table}

According to these values, we can see that the prescribed lower bound $\frac{1}{\sqrt[2\op-3]{2}}$ is a reliable solution for $\op\in \S{2}^{6}$, but not when $\op\in \{7,8\}$, where the following formula is well suited as reported in \cref{tab8}:
\begin{equation}
 \h_{n+1} \geqslant \frac{\h_n}{\sqrt[\op(\op-1)]{\op}}.
\end{equation}

\begin{table}[!ht]
\begin{center}
\begin{minipage}{\textwidth}
\caption{Lower bound of $\h_{n+1}/\h_{n}$, prescribed to $\nfc{\op+1}{~}$ while having $\h_{n}\geqslant \frac{\h_{n-1}}{\sqrt[\op(\op-1)]{\op}}$.}\label{tab8}
\begin{tabular*}{\textwidth}{@{\extracolsep{\fill}}l?ccccccc@{\extracolsep{\fill}}}
\toprule%
$\op$ & 2 & 3 & 4 & 5 & 6 & 7 & 8 \\
\midrule
${1}/{r_{\left\lbrace 2\rvert n+1\right\rbrace}}$ & 0.4501&0.6806&0.7900&0.8559&0.9019&0.9362&0.96351
\\[5pt]
$1/\sqrt[\op(\op-1)]{\op}$ & 0.7071&0.8326&0.8908&0.9226&0.9420&0.9547&0.96354\\
\bottomrule
\end{tabular*}
\end{minipage}
\end{center}
\end{table}

We mention here that there are no constraints in prescribing an upper bound for $1/r_{\left\lbrace 2\rvert n+1\right\rbrace}$, \emph{i.e.} the time step size could be taken greater than the last one with no limit. Nevertheless, studies \cite{Liao-20,Li-22} show that adaptivity of time steps for \ac{BDF} schemes is constrained by having upper bounds to maintain stability. In this paper, we do not establish upper bounds that ensure the stability of the composed flow of \ac{BDFp} for variable time, as this requires further study. Although a restriction on the upper bound will be prescribed for robustness, which aligns with the results found in \cite{Liao-20,Li-22} for the second and third \ac{BDF} schemes. We end by showing the conclusion of this section.
\paragraph{Intermediate conclusion} For stability of the computation, the upper and the lower bounds, denoted by $\ell_\op$ and $\frac{1}{\ell_\op}$ respectively, on the new time step size of the associate numerical flow $\nfc{\op+1}{\h_n}$ are given below:
\begin{equation}
\label{bounding_timestep}
\forall n:
\begin{cases}
~~~~~~~0 \leqslant \frac{\h_{n+1}}{\h_n}\leqslant 2, & \op=1,\\
 \frac{1}{\sqrt[2\op-3]{2}}\leqslant \frac{\h_{n+1}}{\h_n}\leqslant \sqrt[2\op-3]{2}, & \op \in \S{2}^{6},\\
 \frac{1}{\sqrt[\op(\op-1)]{\op}}\leqslant \frac{\h_{n+1}}{\h_n}\leqslant \sqrt[\op(\op-1)]{\op}, & \op \in \S{6}^{8}.
 \end{cases}
\end{equation}

\section{Numerical tests}
\label{sec-num}

In the composition process, $\a_1$ is a complex value, though
the approximation of the solution $\y_n$ in considered by the real part of $\hy_n$ produced by $\nfc{\op+1}{\h}$.
We demonstrate in this section, through numerical tests, that the imaginary part can represent an error estimation of the approximation. This is a powerful tool that enables us to adjust the time step to enhance stability in cases of notoriously stiff problems. In this section, we introduce some notations:
\begin{itemize}
 \item The absolute value of the exact error relative to $\nf{\op}{\h}$:
\begin{equation}
 \me(\t_n) \ \coloneqq\    \|\y(\t_n) - \y_n\| ,
\end{equation}
 \item The exact error relative to the composed numerical flow $\nfc{\op}{\h}$:
\begin{equation}
 \he(\t_n) \ \coloneqq\   \| \y(\t_n) - \Re(\hy_n) \|.
\end{equation}
 \item The imaginary part of $\hy_n$ in its absolute value
\begin{equation}
 \lvert \Im (\hy_n) \rvert.
\end{equation}
\end{itemize}
We denote by $\me_n$ and $\he_n$ approximations to $\me(\t_n)$ and $\he(\t_n)$, respectively.

\subsection{The first example}
We consider the \ac{IVP}  defined over the open interval $]0,5[$, having the initial condition $\y_0\ \coloneqq\ \y(0)$, $\f(\t,\y) = \lambda \y + g(\t)$, and $g(\t)$ a differentiable function. The exact solution is given below:
\begin{equation*}
 \y(\t) = \e^{\lambda \t} \cdot\left(\y_0 + \int_{0}^{\t}\e^{-\lambda \tau}\cdot g(\tau) d\tau \right)
.\end{equation*}
First, we consider $\lambda = -1/10$ and $g(\t) = \sin(\omega\t)$ with $\omega = 2\pi$ and $\y_0=2$, thus the exact solution is given as follows:
\begin{equation}
\label{sol_lin1}
\y(\t) = \e^{\lambda \t}\cdot\left( \y_0+ \frac{\omega - \e^{-\lambda \t}\cdot \big(\lambda \sin(\omega \t)  + \omega\cos(\omega \t)\big)}{\omega^2 + \lambda^2}\right).
\end{equation}
We will demonstrate that the imaginary part of the numerical approximation $\hy_n$, obtained by the composed numerical flow $\nfc{\op}{\h}$, represents an error estimation.  We compare the error of the numerical solution with the exact one for different orders $\op$, and we compare this error with the imaginary part. \cref{fig4} presents for several orders $\op$ and a fixed time step $\h = 0.01$ the plots of $\me_n,\he_n$ and $\Im(\hy_n)$. We can observe how the composed flow yields approximations with increased accuracy and how the imaginary part is of the same order as the exact error. It is worth noticing that when $\op=8$, the error increases with the simulation time in the case when numerical simulations are performed with \ac{BDF}. This is because the scheme is unstable for $\op \geqslant 6$, whereas the errors of approximations produced by the composed technique do not diverge, ensuring that the composed flow is stable by breaking the Dahlquist barrier.
\begin{figure}[!ht]%
\centering
\includegraphics[width=0.45\textwidth]{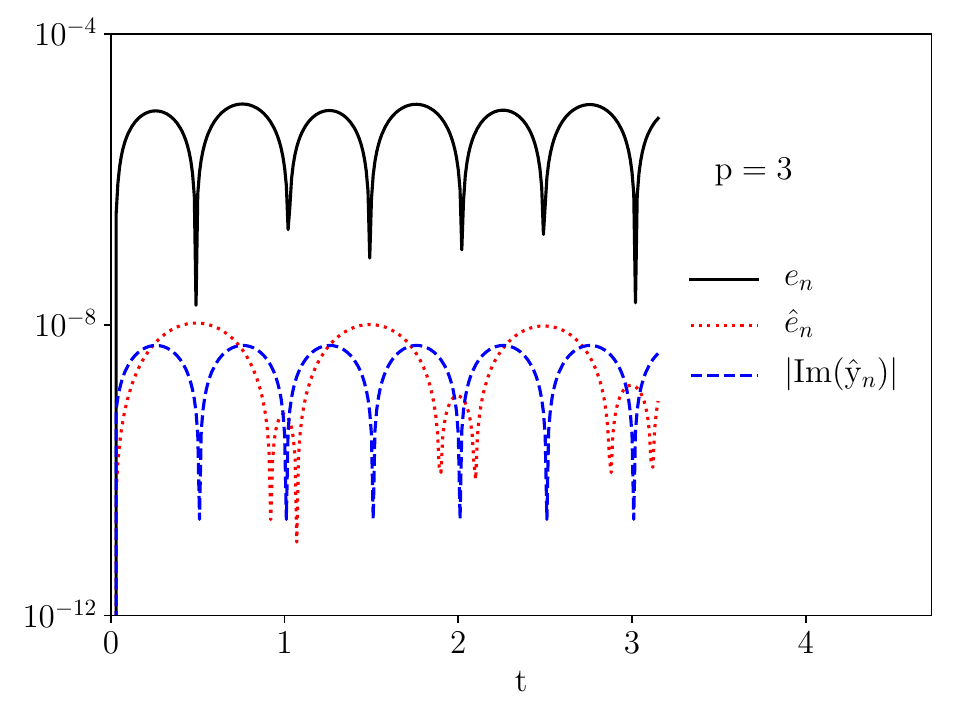}
\includegraphics[width=0.45\textwidth]{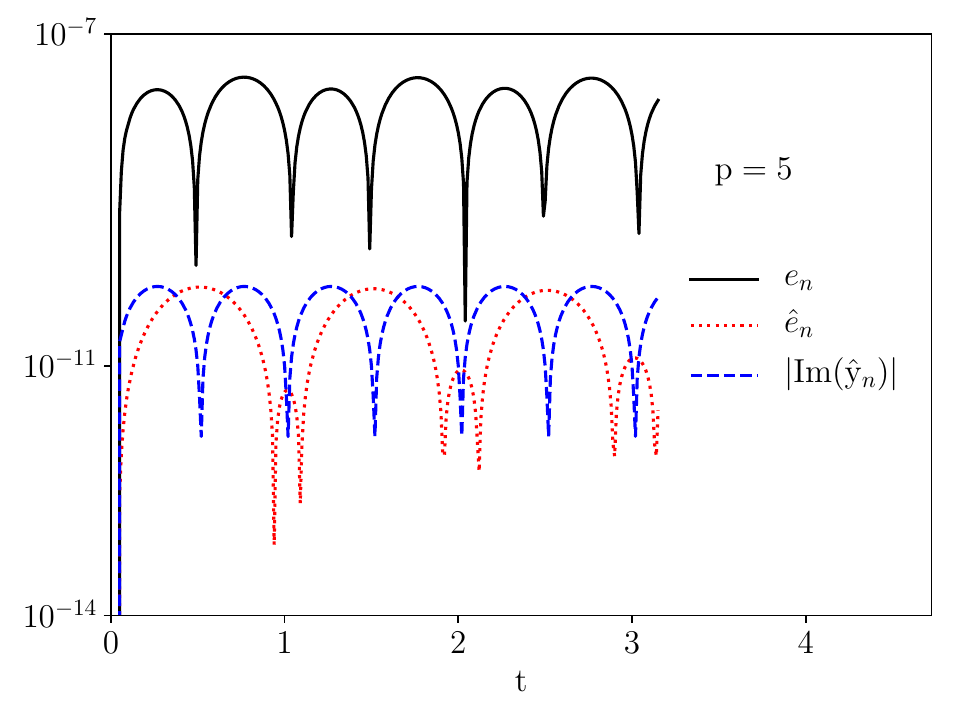}
\includegraphics[width=0.45\textwidth]{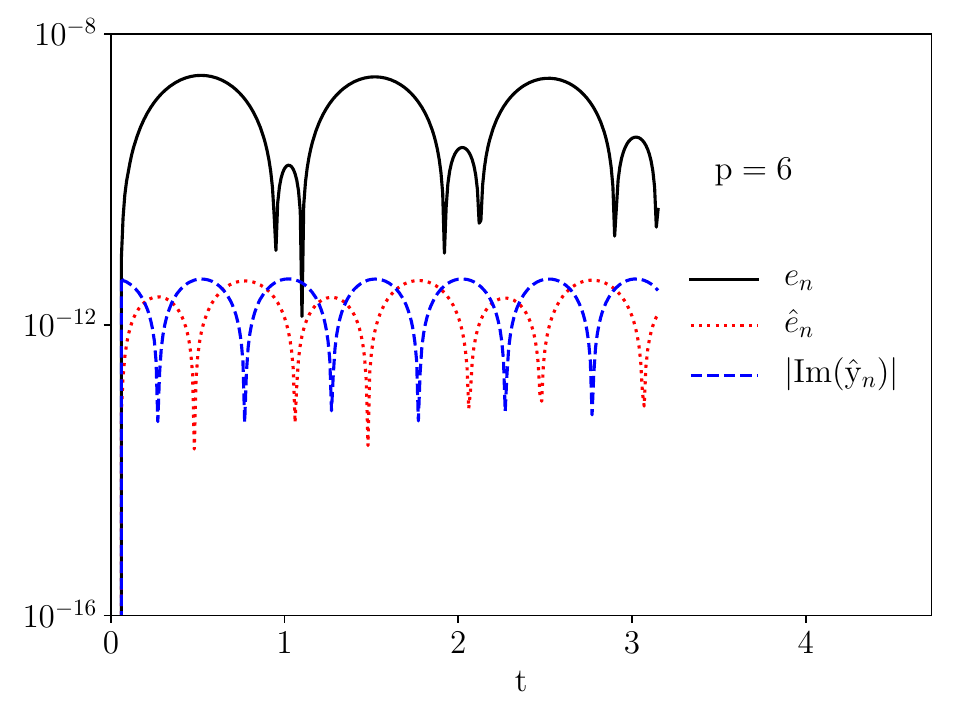}
\includegraphics[width=0.45\textwidth]{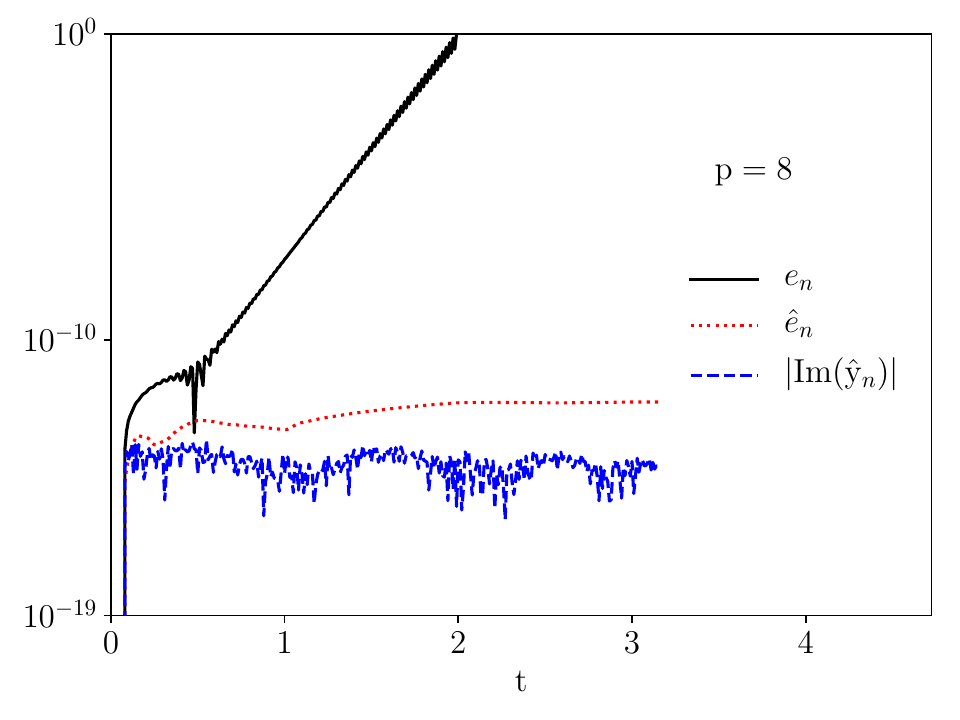}
\caption{Plots of $\me_n$ and $\he_n$ relative of the approximation to solution \eqref{sol_lin1} resulting from $\nf{\op}{}$ and $\nfc{\op+1}{}$ respectively, with the imaginary part $\Im(\hy_n)$ for $\op\in \{3,5,6,8\}$ and with $\h=0.01$.}
\label{fig4}
\end{figure}

\subsection{\ac{ODE}s with stiffness}
In this section, we present two examples of \ac{ODE}s with stiffness. The first one is a linear but non-homogeneous \ac{ODE} while the second is a nonlinear equation. The solution of the latter is represented by the Lambert function. These examples demonstrate the suitability of the imaginary part in following the error and estimating it.

\subsubsection{Linear equation}
We consider here again the last \ac{ODE} with $\lambda = -50$ and $g(\t) = -\lambda \arctan(20 \t)$. The simulation spans the segment $[0,2\pi]$ with the initial condition $\y_0 \coloneqq 1$ and uses the time step $\h = 0.01$. The solution is plotted in \cref{fig5}. It is clear that the solution exhibits a steep variation around $\t=0$ and $\t=3$, necessitating a change in the time step to follow the variation and maintain stable approximations.
\begin{figure}[!ht]%
\centering
\includegraphics[width=0.65\textwidth]{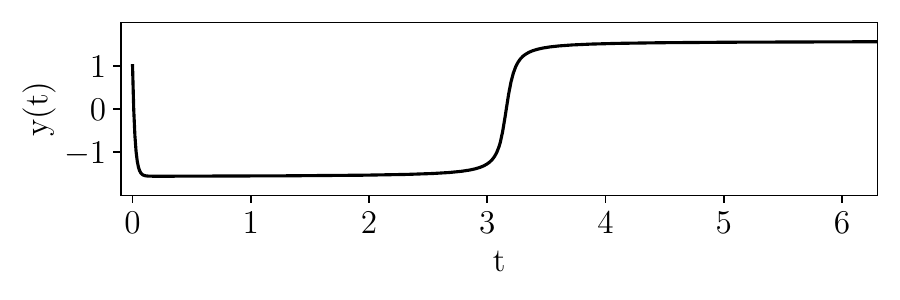}
\caption{The exact solution of the problem $\dot \y = -50 \y + 50\arctan(20\t)$.}
\label{fig5}
\end{figure}

We also add the plot of the error between the exact solution and the one obtained by the composition of the \ac{BDFp} for $\op\in\{4,5\}$. \cref{fig6} presents two evolutions of the exact errors obtained with $\nfc{4}{1/100}$ and $\nfc{5}{1/100}$. This figure also represents the imaginary part of $\hy_n$. First, we see that the accuracy of the solution increases when $\op$ increases. Second, we demonstrate that the evolution of the imaginary part follows that of the exact error while having the same order of magnitude.
\begin{figure}[!ht]%
\centering
\includegraphics[width=0.45\textwidth]{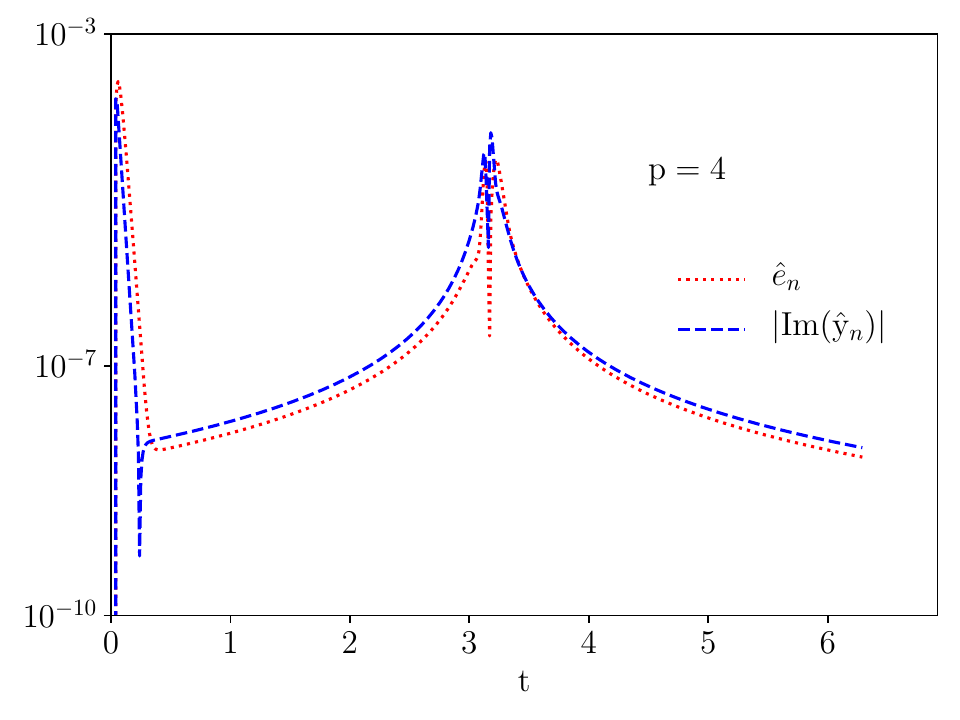}
\includegraphics[width=0.45\textwidth]{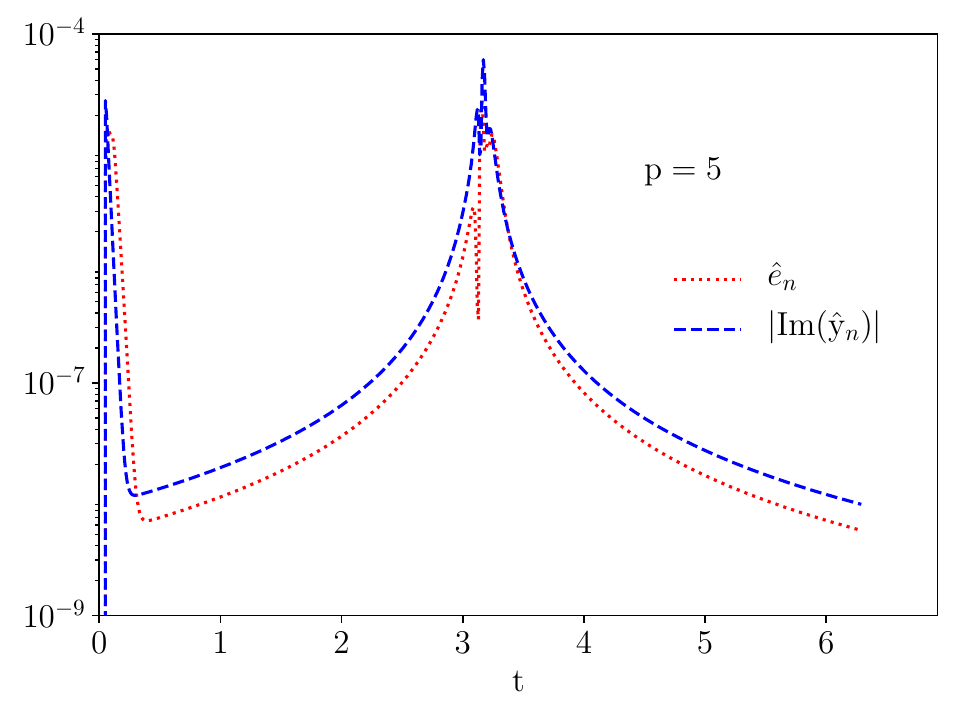}
\caption{Evolution of the exact error $\me(\t_n)$ and of the imaginary part of $\hy_n$ relative to the stiff problem defined by $g(\t) = -50 \arctan(20\t)$ and obtained by the composed flow $\nfc{\op+1}{}$ for $\op\in\{4,5\}$.}
\label{fig6}
\end{figure}

\subsubsection{Lambert function}
Consider the ODE defined by the right hand side $\f(\t,\y) \coloneqq \y^2 -\y^3$ with the initial condition $\y(0) = \delta$ over the segment $[0,2/\delta]$, where its solution is given exactly by  $\y(\t) \equiv \cfrac{1}{W(d e^{\,d-\t}) +1}$, with $d \ \coloneqq\  {1}/{\delta}-1$ and $W(z)$ being the Lambert function defined as the solution to the transcendent equation $We^W = z$. It is a classic example of testing the efficiency of the numerical scheme in a stiff case. The solution is plotted in \cref{fig12} for $\delta =0.01$.
\begin{figure}[!ht]%
\centering
\includegraphics[height=0.23\textwidth]{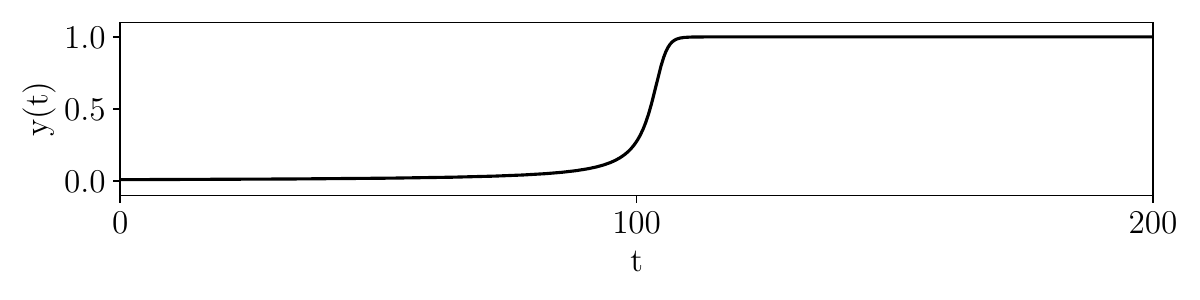}
\caption{Exact solution of the Lambert equation when $\delta = 0.01$.}
\label{fig12}
\end{figure}

\begin{figure}[!ht]%
\centering
\includegraphics[width=0.45\textwidth]{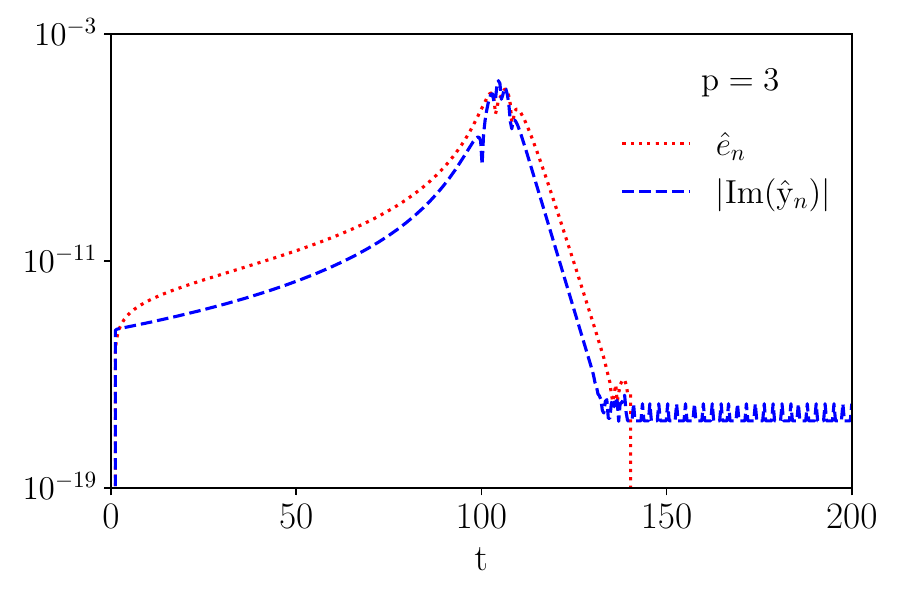}
\includegraphics[width=0.45\textwidth]{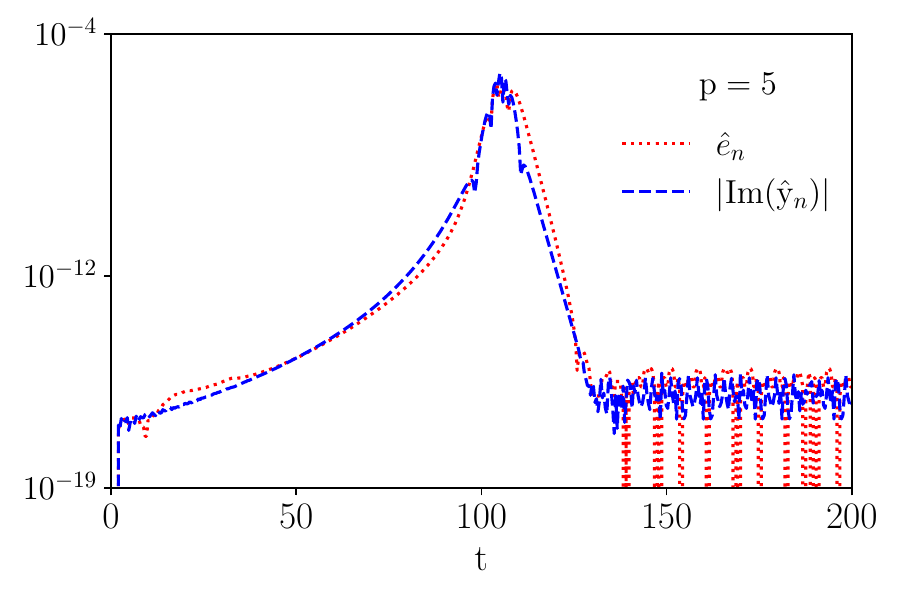}
\caption{Evolution of the exact error $\me(\t_n)$ and of the imaginary part of $\hy_n$ relative to the Lambert problem when $\delta =0.01$. It is obtained with the composed flow $\nfc{\op+1}{}$ for $\op\in\{3,5\}$.}
\label{fig7}
\end{figure}
\cref{fig7} presents the plots of the exact error evolution, for a fixed time step $\h_n = 0.01$, compared with the imaginary part of the approximation obtained by the composed flow for two orders $\op\in\{3,5\}$. It is clear that the latter follows the same pattern as the former. To check the efficiency of the rule of thumb formula in \eqref{bounding_timestep} for choosing the value of the new step size and ensure having a real positive part in the root $\a_1$, we proceed to update the time step with and without it. \cref{fig9} presents this comparison between the real parts during the simulation. The upper panel presents the case without it, where a negative real part occurs around $\t=100$, thus blocking the marching time. We can see that applying the rule of thumb presented in \eqref{bounding_timestep}, ensures advancing in time by the composed flow as all roots of $\a_1$ during the simulation have positive real parts.

\begin{figure}[!ht]%
\centering
\includegraphics[height=0.23\textwidth]{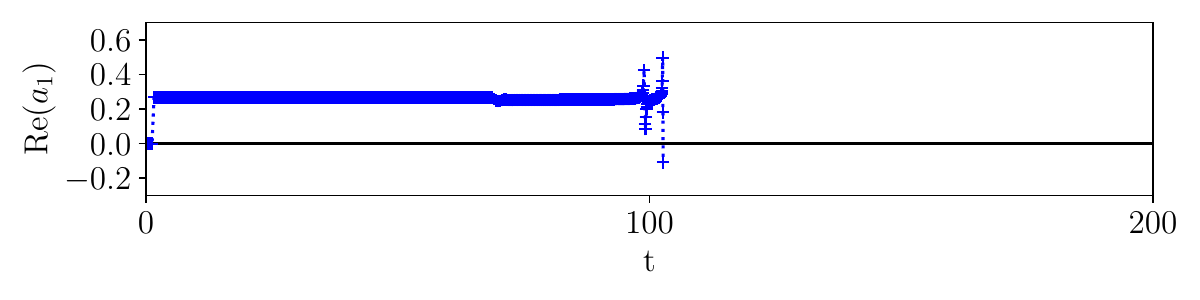}
\includegraphics[height=0.23\textwidth]{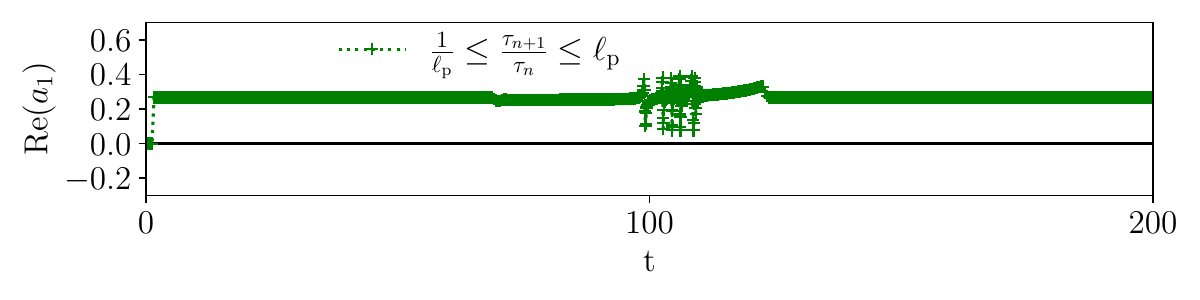}
\caption{Evolution of the maximum of the real part of roots $\a_1$ during the simulation when $\op=4$ and $\tol=10^{-12}$ using the bounding strategy in \cref{bounding_timestep} (lower panel) and without it (upper panel).}
\label{fig9}

\end{figure}\begin{figure}[!ht]%
\centering
\includegraphics[height=0.23\textwidth]{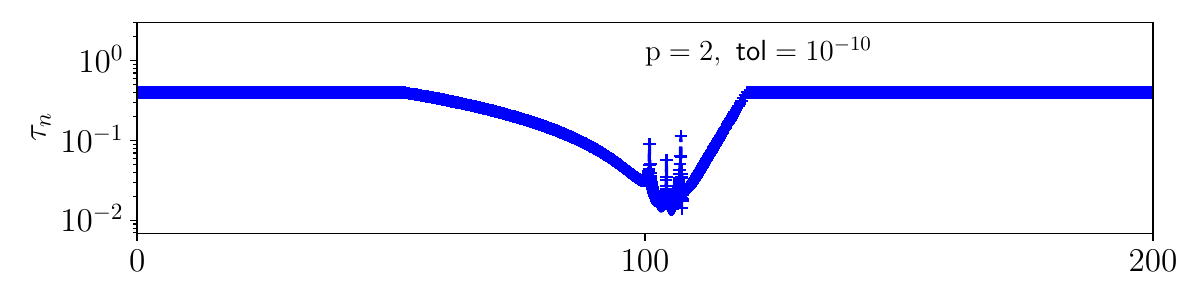}
\includegraphics[height=0.23\textwidth]{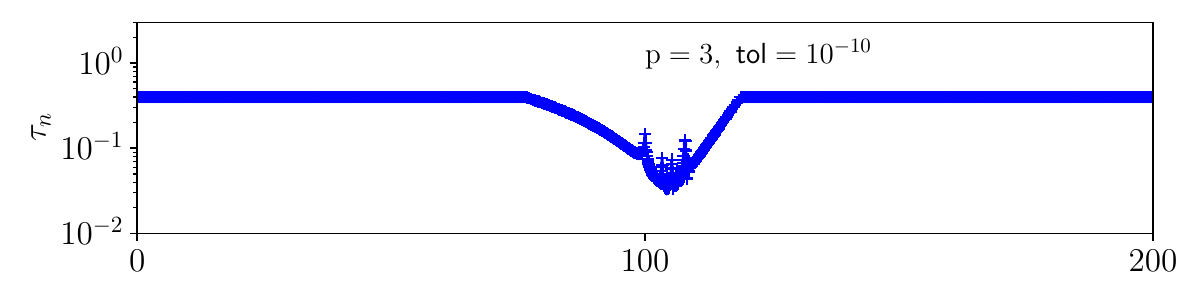}
\includegraphics[height=0.23\textwidth]{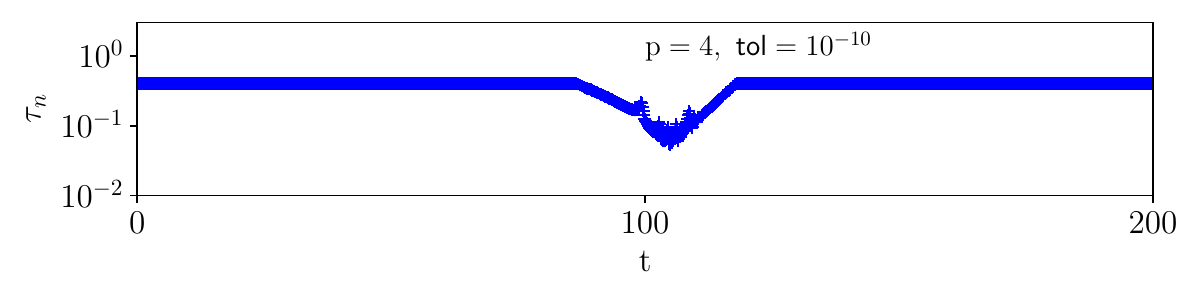}
\caption{Evolution of the $\h_n$ during the simulation with the adaptive \cref{alg-adap} for orders $\op=2,3,4$ and $\tol = 10^{-10}$.}
\label{fig10}
\end{figure}

We end by plotting the evolution of the time step $\h_n$ for different features of the simulation. \cref{fig10} presents the evolution when applying the composed flow with different orders $\op\in\{2,3,4\}$ and a fixed user tolerance $\tol$.
All simulations present at the beginning a steady time step as the solution shows no variation before reaching the neighborhood of $\t=100$, as \cref{fig12} demonstrates it.
We can see how the order of the scheme affects time step adaptivity: the higher the order, the lower the variation in the time step. However, and for a fixed order $\op$, the adaptivity of the time step size presents a high rate of variation when the tolerance is smaller. This is demonstrated in \cref{fig11} for $\op=2$ and $\tol \in\{ 10^{-7},10^{-9},10^{-12}\}$.

\begin{figure}[!ht]%
\centering
\includegraphics[height=0.23\textwidth]{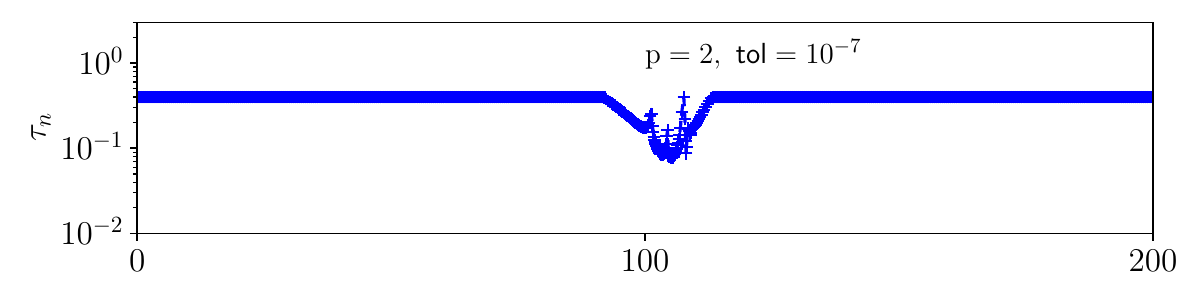}
\includegraphics[height=0.23\textwidth]{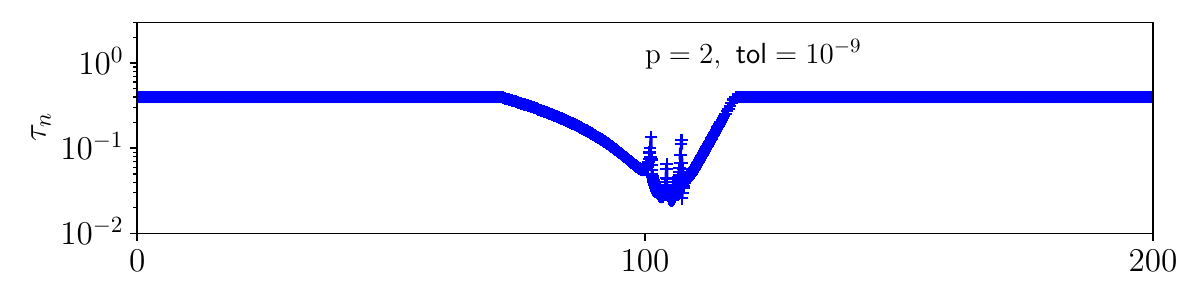}
\includegraphics[height=0.23\textwidth]{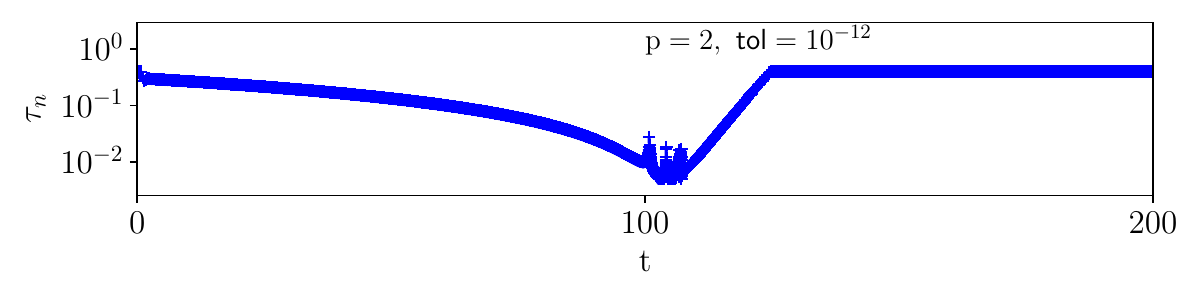}
\caption{Evolution of the $\h_n$ during the simulation with the adaptive \cref{alg-adap} for $\op=2$ and various user tolerance $\tol\in\{ 10^{-7},10^{-9},10^{-12}\}$.}
\label{fig11}
\end{figure}

\paragraph{Intermediate conclusion}
We conclude from these numerical experiments that the proposed numerical flow obtained by twice composing the \ac{BDFp} numerical flow produces a numerical scheme where the output has two parts: the real part, an approximation to the solution with an additional accuracy, and the imaginary part, an error estimate of the resulting approximation.

\section{Conclusion and perspectives}\label{sec13}
In this paper, we presented a composition technique for implicit \ac{BDFp} schemes. Unlike the cyclic composition methods proposed in the last century \cite{Hansen-69, cash-77}, our technique is akin to those developed for one-step methods \cite{blanes-01}. Specifically, it employs the same numerical flow with varying time step sizes.

We proved that two consecutive compositions of a \ac{BDF} scheme of order $\op$ yield an approximation with an additional order of accuracy. Notably, this enhancement is achieved without requiring additional backward points. To attain the higher order, an algebraic equation must be solved once for all fixed time steps for a given order $\op$, or at every time layer when variable time steps are used. The solution to this algebraic equation is a complex number, which can be computed numerically.

Furthermore, leveraging the difference operator for an \ac{LMS} method, we demonstrated that the imaginary part of the outputs provides an error estimate for the approximation generated by the composed flow. The corresponding error constant is also derived. This was illustrated through numerical experiments conducted on several classic stiff problems, both at the end of the paper and throughout the discussion.

Another advantage of the composed flow of order $\op$\up{th} is that it uses $\op-1$ backward points, compared to the \ac{BDFp} method, which requires $\op$ points. This reduction in backward points helps minimize memory allocation.

Furthermore, numerical tests have demonstrated that the computational performance of the composed flow of order $\op$ is comparable to that of the classical \ac{BDF} scheme of the same order for low orders (2 and 3) and surpasses it for higher orders. By surpassing, we mean that the composed flow achieves the same level of accuracy with lower CPU time. This improved performance is achieved with fewer backward points.

If the same number of backward points is maintained, and the computational efficiency of the composed flow is compared to the classical \ac{BDFp}, the composed flow outperforms the classical \ac{BDFp} for any order $\op$.

The linear stability of the composed scheme was investigated.
The generating polynomials were obtained in the case of a linear differential equation, and angles of stability were determined for every order.
We have proved graphically that the new scheme $\nfc{\op}{}$ is $\mA(\vartheta)$ stable up to order $\op=8$, breaking the Dahlquist barrier, equal to six, in implicit \ac{BDF} schemes.

Variable time stepping is analyzed, and conditions for ensuring a real positive part of the jumping step are presented. A lower bound formula for the ratio of two consecutive time steps is derived to guarantee stable time marching in simulations using the composed flow.

From our perspective, the developed technique of the composition should be extended to Partial Differential Equations. To achieve this, we should develop a mathematical formulation that writes the variational formulation of the problem in a mixed form, where the real and imaginary parts are the two associated unknowns, utilizing a mixed formulation strategy. In this case, the scalar field of the imaginary part associated with the variable in question is of interest, not only for adapting the time step but also for use in mesh refinement. This should be investigated in future works.

\section*{Acknowledgments}

This publication is based upon work supported by the Khalifa University of Science and Technology under Award No. FSU-2023-014.
This work has been supported by the Khalifa University of Science and Technology under award no. RIG-2023-024.

\section*{List of abbreviations}
\begin{acronym}[TDMA]
\acro{ODE}{Ordinary Differential Equations}
\acro{IVP}{Initial Value Problem}
\acro{LMS}{Linear Multi-Step}
\acro{BDF}{Backward Difference formula}
\acro{BDFp}{\ac{BDF} of order $\op$}
\acro{RK}{Runge-Kutta}
\acro{ERK}{Embedded Runge-Kutta}
\acro{IRK}{Implicit Runge-Kutta}
\acro{ETD}{Exponential-Time Differencing}
\acro{DSR}{Divergent Series Resumation}
\acro{BPL}{Borel-Padé-Laplace}
\acro{ATS}{Adaptive Time Stepping}
\acro{CN}{Crank-Nicolson}
\acro{DOC}{Discrete Orthogonal Convolution}
\end{acronym}

\nomenclature[001]{$\t$}{Time variable coordinate}
\nomenclature[002]{$\T{}$}{The final time of simulation}
\nomenclature[003]{$\y(\t)$}{The unknown function}
\nomenclature[004]{$\f$}{Right hand side of the differential system}
\nomenclature[005]{$\me(\t)$}{Local error}
\nomenclature[006]{$\phi_t$}{Exact flow of the differential system}
\nomenclature[007]{$\t_n$}{The $n$\up{th} instant of time}
\nomenclature[008]{$\h_n$}{The $n$\up{th} time step magnitude}
\nomenclature[009]{$\y_n$}{The $n$\up{th} approximation of $\y(\t_n)$}
\nomenclature[010]{$\me_n$}{Error estimate of $\me(\t_n)$}
\nomenclature[011]{$\S{i}^j$}{The set of positive integers numbers between $i$ and $j$}
\nomenclature[012]{$(.)\vert_{\t=\t_n}$}{Evaluation operator at $\t=\t_n$}
\nomenclature[013]{$\nabla^j\y_n$}{Backward Difference operator}
\nomenclature[014]{$\gf{i}$}{The $i$\up{th} coefficients of \ac{BDF} in case of fixed time step}
\nomenclature[015]{$\g{i}$}{The $i$\up{th} coefficients of \ac{BDF}($\op$) to find approximation at $\t=\t_n$}
\nomenclature[016]{$L$}{Difference operator}
\nomenclature[017]{$\tol$}{User tolerance}
\nomenclature[018]{$\mT_{n,\op}$}{Vector containing instants points set $\{\t_{n-i}\,\vert\,i\in\S{0}^{\op-1}\}$}
\nomenclature[019]{$\Y_{n,\op}$}{Vector containing elements of the set $\{\y_{n-i}\,\vert\,i\in\S{0}^{\op-1}\}$}
\nomenclature[020]{$\nf{\op}{\h_n}$}{Numerical flow of the \ac{BDFp} with a time step $\h_n$}
\nomenclature[021]{$\a_1$}{A complex number}
\nomenclature[022]{$\tnmh$}{Intermediate instant between $\t_{n-1}$ and $\t_n$}
\nomenclature[023]{$\ynmh$}{Approximation of $\y(\tnmh)$}
\nomenclature[024]{$\Tnmh{\op}$}{Vector of instants $\{\t_{n-i} \wedge \tnmh \,\vert \, i \in\S{1}^{\op-1}\}$}
\nomenclature[025]{$\Ynmh{\op}$}{Vector of approximations $\{ \y_{n-i} \wedge \ynmh \,\vert \, i \,\in \S{1}^{\op-1}\}$}
\nomenclature[026]{$\eps{j}$}{The $j$\up{th} ratio relative to the first integration in the composition}
\nomenclature[027]{$\mT^{\prime}_{n,\op}$}{Vector of instants $\{\t_{n-i} \wedge \tnmh \,\vert \, i \in\S{2}^{\op}\}$}
\nomenclature[028]{$\Y^{\prime}_{n,\op}$}{Vector of approximations $\{\y_{n-i} \wedge \ynmh \,\vert \, i \,\in\S{2}^{\op}\}$}
\nomenclature[029]{$\hy_n$}{Approximation of $\y(\t_n)$ using double composition}
\nomenclature[030]{$\Re(\hy_n)$}{The real part of $\hy_n$}
\nomenclature[031]{$\Im(\hy_n)$}{The imaginary part of $\hy_n$}
\nomenclature[032]{$\nfc{\op+1}{\h_n}$}{The numerical flow of the double composition of $\nf{\op}{}$}
\nomenclature[033]{$\ell_\op$}{Bound element in the adaptive tomestep choosing process}
\nomenclature[034]{$\Eps{j}$}{$j$\up{th} ratio relative to the second integration in the composition}
\nomenclature[035]{$\G{i}$}{$i$\up{th} coefficients of \ac{BDF}($\op+1$) to find approximation at $\t=\t_n$}
\nomenclature[036]{$L$}{Difference operator of the first integration in $\nfc{\op+1}{}$}
\nomenclature[037]{$\E_j$}{$j$\up{th} term of the Taylor development of $L_1$}
\nomenclature[038]{$L_2$}{Difference operator of the second integration in $\nfc{\op+1}{}$}
\nomenclature[039]{$\mE_j$}{$j$\up{th} term of the Taylor development of $L_2$}
\nomenclature[040]{$z$}{A complex variable}
\nomenclature[041]{$p_\op(\omega,z)$}{Polynomial associated to an \ac{LMS} method}
\nomenclature[042]{$\omega_i(z)$}{$i$\up{th} root of $p_\op$ as a function of $z$}
\nomenclature[043]{$\mathcal{D}_{\nfc{\op}{}}$}{$A$-stability domain of the numerical flow $\nfc{\op}{}$}
\nomenclature[044]{$\vartheta_{\nfc{\op}{}}$}{Stability angle of the numerical flow $\nfc{\op}{}$}

\printnomenclature

\begin{appendices}
\section{Algorithm of flow $\nf{\op}{\h_n}$}
 \label{sec_app2}
 In this section, we present the numerical flow associated with the \ac{BDFp} in \cref{alg:BDFk}.

 \begin{algorithm}[!ht]
\caption{The numerical flow $\nf{\op}{\h_n} (\mT_{n-1,\op},\Y_{n-1,\op})$ of \ac{BDFp}}\label{alg:BDFk}
\begin{algorithmic}
\Require $\h_n$, $\tol$
\State [$\g{0},\ldots,\g{\op}] \gets \mathsf{Coeff}\big([\t_{n-\op},\ldots,\t_{n-1},\h_n]\big)$ (see \cref{alg:gi})
\State $\ell \gets 0$
\State $ \y_{n,\ell} \gets \y_{n-1}$
\State  $\me_{n,\ell}\gets \tol\times 2$
\While{$\me_{n,\ell} > \tol$}
    \State $\y_{n,\ell+1} \gets \F_\op\big([\g{0},\ldots,\g{\op}], \y_{n,\ell}\big)$
    \State $\me_{n,\ell+1} \gets \|\y_{n,\ell+1} - \y_{n,\ell}\|$
    \State $ \y_{n,\ell+1} \gets \y_{n,\ell}$
    \State $\ell \gets \ell +1$
\EndWhile
\State $\y_n \gets \y_{n,\ell+1}$
 \State \Return $(\mT_{n,\op},\Y_{n,\op})$
\end{algorithmic}
\end{algorithm}

\section{Proof of \cref{prop1}}
\label{sec_app3}
In this section, the proof of \cref{prop1} is done only for $\G{\op+1}$. The proofs for other coefficients $\{\G{i}\,\vert\,i\in\S{0}^{\op}\}$ can be obtained using the same strategy.
Mainly, we use the Cramer rule to compute the solution of $\G{\op+1}$:
\begin{equation}
\label{formula_detGk}
 \G{\op+1} = \frac{\det(\MAp_{n,\op+1\vert \op+2})}{\det(\MAp_{n,\op+1})},
\end{equation}
where $\det: \Mat{n}{\mathds{C}} \rightarrow \mathds{C}$ is the application returning the determinant of a given matrix and $\MAp_{n,\op+1\vert \op+2}$ is the matrix obtained by replacing the right hand side in the $(\op+2)$\up{th} column in matrix $\MAp_{n,\op+1}$.
We can see that the last case in the second column contains only two elements: $\Eps{0}^{\op+1}$ and $(-\a_1)^{\op+1}\prod\limits_{i=1}^{\op} \eps{i}$ leading us to decompose the determinant into two parts as follows:\begin{equation}
 \det\left( \MAp_{n,\op+1} \right) = \det\left( \MB_{\op}\right) - \frac{\a_1^{\op+1}}{\g{0}} \prod\limits_{i=1}^{\op}\eps{i} \det\left( \MB_{\op}^{1,\op+1} \right).
\end{equation}
In the same context and after taking factor of $-\Eps{0}$ in the last column, we get:
\begin{equation}
\label{det_Ak_1}
 \det\left( \MAp_{n,\op+1\vert \op+2} \right) = (-1)^{\op+1}\Eps{0}\left( \det\left( \MB_{\op+1}\right) +  \frac{\a_1^{\op+1}}{\g{0}} \prod\limits_{i=1}^{\op}\eps{i}\det\left( \MB_{\op+1}^{1,\op} \right) \right),
\end{equation}
where $\MB_{\op-i}\in \Mat{\op-i}{\mathds{C}}$ is the following matrix defined in \cref{MBk} and $\MB_{\op-i}^{j,\ell}$ results from $\MB_{\op}$ after deleting the $j$\up{th} column and the $\ell$\up{th} row.
\begin{equation}
\label{MBk}
\MB_{\op-j}\ \coloneqq \
\left(
\begin{array}{ccc}
\Eps{0}^{1+j} &\ldots &  \Eps{\op-j}^{1+j}\\
\vdots& \ddots &\vdots\\
\Eps{0}^{\op} & \ldots & \Eps{\op-j}^{\op}\\[8pt]
 \end{array}
 \right)
 \cdot
 \end{equation}
After manipulating columns and rows in both $\MB_\op$ and $\MB^{1,\op}_{\op}$, and using elementary algebraic operations, we can show that:
\begin{eqnarray}
\label{det_Bk}
 \det(\MB_{\op-j}) &=& \prod\limits_{i=0}^{\op-j}\Eps{i}^{j+1}\prod\limits_{i=0}^{\op-j-1}\prod\limits_{\ell=i+1}^{\op-j} (\Eps{\ell}-\Eps{i}),\\
 \label{det_Bk1k}
 \det(\MB^{1,\op-j+1}_{\op-j}) & = &\prod\limits_{i=1}^{\op-j}\Eps{i}^{j+1}\prod\limits_{i=1}^{\op-j-1}\prod\limits_{\ell=i+1}^{\op-j} (\Eps{\ell}-\Eps{i}).
\end{eqnarray}
After using formulas \eqref{det_Bk} and \eqref{det_Bk1k} in \eqref{det_Ak_1}, and taking common factors, we get:
\begin{eqnarray*}
  \det\left( \MAp_{n,\op+1\vert \op+2} \right) &=& (-1)^{\op+1} \Eps{0} \left(  \prod\limits_{i=0}^{\op-1}\Eps{i}^{2}\prod\limits_{i=0}^{\op-2}\prod\limits_{\ell=i+1}^{\op-1} (\Eps{\ell}-\Eps{i})  \right.\\
  && + \left. \frac{\a_1^{\op+1}}{\g{0}} \prod\limits_{i=1}^{\op}\eps{i} \prod\limits_{i=1}^{\op-1}\Eps{i}^{2}\prod\limits_{i=1}^{\op-2}\prod\limits_{\ell=i+1}^{\op-1} (\Eps{\ell}-\Eps{i})\right)
 \end{eqnarray*}
 Taking common factors, we have:
 \begin{eqnarray*}
  \det\left( \MAp_{n,\op+1\vert \op+2} \right)  &=&(-1)^{\op+1}\Eps{0}\prod\limits_{i=1}^{\op+1}\Eps{i}^{2}\prod\limits_{i=1}^{\op-2}\prod\limits_{\ell=i+1}^{\op+1} (\Eps{\ell}-\Eps{i})\times\\
  &&\left(\Eps{0}^2\prod\limits_{\ell=1}^{\op+1}(\Eps{\ell}-\Eps{0})
  +\frac{\a_1^{\op+1}}{\g{0}} \prod\limits_{i=1}^{\op}\eps{i},
  \right)
 \end{eqnarray*}
 which equal to:
 \begin{equation}
  \label{det_Ak_2}
  \begin{array}{ccl}
  \det\left(  \MAp_{n,\op+1\vert \op+2}\right) &=& \displaystyle\frac{(-1)^{\op+1}}{\g{0}}\a_1^{\op+1}\Eps{0}\prod\limits_{i=1}^{\op-1}\eps{i}\Eps{i}^{2}\times\\
   &&\prod\limits_{i=1}^{\op-2}\prod\limits_{\ell=i+1}^{\op-1} (\Eps{\ell}-\Eps{i})\times\left( \g{0}\Eps{0}^2 + \a_1^2 \eps{\op} \right)
   \end{array}
 \end{equation}
We also show that:
\begin{equation}
\label{det_Ak}
 \det(\MAp_{n,\op+1})  = \displaystyle\frac{\a_1^{\op}}{\g{0}} \prod\limits_{i=1}^{\op}\eps{i}\Eps{i}\prod\limits_{i=1}^{\op-1}\prod\limits_{\ell=i+1}^{\op} {(\Eps{\ell}-\Eps{i})}\Big(\g{0}\Eps{0}-\a_1\Big).
\end{equation}
To this end, we replace Formulas \eqref{det_Ak_2} and \eqref{det_Ak} in \eqref{formula_detGk}. After simplifying, we obtain the formula of $\G{\op+1}$ in \eqref{form_Gkp1}, which completes proof for $\G{\op+1}$:
\begin{equation}
 \label{finalGk1}
 \G{\op+1} = (-1)^{\op+1} \prod\limits_{i=1}^{\op-1}\left[\frac{\Eps{i}}{\Eps{\op}-\Eps{i}}\right] \frac{(\a_1-1)\Big[\Eps{0}^2\g{0} + \a_1^2 \eps{\op}\Big]}{\Eps{\op}\big(\r{\op}+\a_1\big)\big[\Eps{0}\g{0}-\a_1\big]}
\end{equation}

\section{Closed form of $\G{\op+1}$}
\label{sec_app4}
We present in this Appendix the explicit expression of $\G{\op+1}$ when $\op\in\S{1}^{4}$

\begin{equation*}
\begin{array}{lccl}
\G{1+1}&=& & \cfrac{(\a_1 - 1)(2\a_1^2 - 2\a_1 + 1)}{\a_1(2\a_1 - 1)},\\[12pt]
\G{2+1} &=&-&\cfrac{(\a_1-1)(3\a_1^3 + \a_1^2(3\r{2}-4) + \a_1(\r{2}^2 - 2\r{2} + 2) + \r{2})}{\r{2}(\r{2}+\a_1)(\r{2} + 1)(3\a_1^2 + 2\a_1(\r{2} - 1) - \r{2})},\\[12pt]
 \end{array}
\end{equation*}
\begin{equation*}
\begin{array}{lccl}
&&&\left[
 \begin{array}{l}
 (\a_1-1)(\r{2}+1)\times\\
 \Big(4\a_1^4 + \a_1^3(3\r{2} + 4\r{3}-6)\\
 + \a_1^2\big( (\r{2}(3\r{3} -4)+ \big(\r{3}^2 -4 \r{3}+3\big) \big)\\
 + \a_1\big(\r{2}(\r{3}^2  - 2\r{3} +2)  + 2\r{3}\big) + \r{2}\r{3}\Big)
 \end{array}
 \right]\\
  \G{3+1}&=&& \rule[2pt]{10cm}{0.4pt},\\
&&& \hspace{10pt}\left[
 \begin{array}{l}
 \r{3}(\r{3}+\a_1)(\r{3} + 1)(\r{3}-\r{2}) \times \\
\Big(4\a_1^3 + 3\a_1^2(\r{2} + \r{3}-1) \\
 + 2\a_1(\r{2}\r{3} - \r{2} - \r{3}) - \r{2}\r{3}\Big)
  \end{array}
 \right]\\[20pt]
 \end{array}
\end{equation*}

\section{Algorithm computing $\g{i}$}
\label{sec_app1}
We present below steps in \cref{alg:gi} computing $\{\g{i}\,\vert\,i\in\S{}^{\op}\}$ related to \cref{bdfk_adapt} and \cref{formula_gi_adapt} approximating $\y(\t_n)$ using last approximations on $\mT_{n-1,\op}$ and with a time step $\h_n$ \cite[page 419]{book:hairer}. We denote by \textsf{Coeff} the function computing these coefficients.
\bigskip
\begin{algorithm}[!ht]
\caption{Steps to compute coefficients $\g{i}$ in \cref{bdfk_adapt}: $\mathsf{Coeff}\big([\t_{n-\op},\ldots,\t_{n-1},\h_n]\big)$}\label{alg:gi}
\begin{algorithmic}
 \Require $\t_n \gets \t_{n-1}+\h_n$
 \Require $\sfT\gets[\t_n,\t_{n-1},\ldots,\t_{n-\op}]$
 \State $\op \gets \mathsf{size}(\sfT)$
 \For{$i\gets 0$ to $\op$}
    \State $\g{i}\gets 0$
    \State $b\gets \h_n$
    \State $c \gets 1$
    \For{$m \gets 1$ to $i$}
        \If{$m< i-2$}
            \State $b \gets b \times (\sfT_0-\sfT_{m})$
        \EndIf
        \If{$m\neq i$}
            \State $c \gets c \times \displaystyle \frac{1}{\sfT_{i} - \sfT_{m}}$
        \EndIf
    \EndFor
    \For{$j \gets \max(1,i)$ to $\op$}
        \If{$j \geqslant 2$}
            \State $b \gets b \times (\sfT_0-\sfT_{j-1})$
        \EndIf
        \If{$j\neq i$}
            \State $c \gets c \times \displaystyle \frac{1}{\sfT_{i} - \sfT_{j}}$
        \EndIf
    \State $\g{i}\gets \g{i}+ (b\times c)$
    \EndFor
 \EndFor
 \State \Return $[\g{0},\ldots,\g{\op}]$
\end{algorithmic}
\end{algorithm}

\section*{Competing interests}
The research leading to these results was funded by Khalifa University under Grant Agreement No. FSU-2023-014 and No. RIG-2023-024. All authors have approved the submission and have no conflicts of interest to disclose.

\end{appendices}

\bibliography{references2}

@misc{deeb:part1,
    author = {Deeb, Ahmad and Dutykh, Denys},
    title = {Error estimation for numerical approximations of {ODE}s via composition techniques. {P}art {I}: {O}ne-step methods},
    note = {Submitted},
    doi = {https://doi.org/10.48550/arXiv.2409.10548},
    month = {Juin},
    year = {2024},
}

@article{deeb:AML1,
author = {Laadhari, Aymen and Deeb, Ahmad and Kawi, Badr},
title = {Hydrodynamics simulation of red blood cells: Employing a penalty method with double jump composition of lower order time integrator},
journal = {Mathematical Methods in the Applied Sciences},
doi = {https://doi.org/10.1002/mma.9607},
month = {August},
year = {2023}
}

@article{deeb:casson,
author = {Laadhari, Aymen and Deeb, Ahmad},
title = {Computational Modeling of Individual {R}ed {B}lood {C}ell Dynamics Using Discrete Flow Composition and Adaptive Time-Stepping Strategies},
journal = {Symmetry},
volume={15},
issue = {6},
pages = {1138},
publisher = {MDPI},
year = {2023},
doi = {https://doi.org/10.3390/sym15061138}
}

@article{ahmad_pgd_pade,
title = {Proper {G}eneralized {D}ecomposition using {T}aylor expansion for non-linear diffusion equations},
journal = {Mathematics and Computers in Simulation},
volume = {208},
pages = {71-94},
year = {2023},
issn = {0378-4754},
doi = {https://doi.org/10.1016/j.matcom.2023.01.008},
author = {Deeb, Ahmad and Kalaoun, Omar and Belarbi, Rafik}
}

@article{ahmad_comp_bpl_sfg_2015,
title ={Comparison between {B}orel-{P}ad{\'e} summation and factorial series, as time integration methods},
journal ={Discrete and Continuous Dynamical Systems - Series S},
volume ={9},
number ={2},
pages ={393-408},
year ={2016},
month={April},
issn ={1937-1632},
doi={https://doi.org/10.3934/dcdss.2016003},
author ={Deeb, Ahmad and Razafindralandy, Dina and Hamdouni, Aziz}
}

@article{DEEB_2022_bpl,
title = {Performance of {B}orel-{P}adé-{L}aplace integrator for the solution of stiff and non-stiff problems},
journal = {Applied Mathematics and Computation},
volume = {426},
publisher = {Elsevier},
year = {2022},
doi = {https://doi.org/10.1016/j.amc.2022.127118},
author = {Ahmad Deeb and Aziz Hamdouni and Dina Razafindralandy}
}

@article{ahmad_bpl_2014,
title = {Borel-{L}aplace summation method used as time integration scheme},
journal = {ESAIM: Procedings and Surveys},
year = {2014},
volume = {45},
pages = {318-327},
doi = {https://doi.org/10.1051/proc/201445033 },
author = {Deeb, Ahmad and Hamdouni, Aziz and Liberge, Erwan and Razafindralandy, Dina}
}

@inproceedings{ahmad_icnpaa_2016,
author = {Razafindralandy, Dina and Hamdouni, Aziz and Deeb, Ahmad },
title = {Considering factorial series as time integration method},
year = {2017},
Volume = {1798},
issue ={1},
series = {AIP Conference Proceedings},
booktitle = {11th International Conference on Mathematical Problems in Engineering, Aerospace and Sciences},
eventdate = {4-8},
language = {English},
address = {La Rochelle-France},
month = {July},
doi = {https://doi.org/10.1063/1.4972721},
publisher = {American Institute of Physics}
}

@article{deeb:stab-NS,
    author = {Deeb, Ahmad and Dutykh, Denys},
    title = {Numerical Integration of Navier-Stokes Equations by Time series expansion and Stabilized FEM},
    journal = {Mathematics and Computers in Simulation},
    doi = {10.1016/j.matcom.2025.01.023},
    volume = {233},
    pages={208--236},
    year = {2025},
    month={July}
}

@article{dromand_prince,
title = {A family of embedded {R}unge-{K}utta formulae},
journal = {J. Comp. App. Math.},
volume = {6},
number = {1},
pages = {19 - 26},
year = {1980},
note = {},
issn = {0377-0427},
doi = {http://dx.doi.org/10.1016/0771-050X(80)90013-3},
author = {Dormand, J.R. and Prince, P.J.}
}

@article{bogacki_shampine,
title = {A 3(2) pair of {R}unge-{K}utta formulas},
journal = {App. Math. Lett.},
volume = {2},
number = {4},
pages = {321 - 325},
year = {1989},
note = {},
issn = {0893-9659},
 author = {Bogacki, Przemysław and Shampine, Lawrence F.},
}

@book{book:butcher,
title =     {Numerical Methods for Ordinary Differential Equations},
author =    {Butcher,John C.},
publisher = {John Wiley \& Sons},
year =      {2008},
address = {West Sussex},
series =    {},
edition =   {2nd},
volume =    {}
}

@book{book:hairer,
title =     {Solving Ordinary Differential Equations I: Nonstiff Problems},
author =    {Hairer, Ernst and Norsett, Syvert P. and Wanner, Gerhard},
publisher = {Springer},
address = {Heidelberg},
year =      {2009},
series =    {Springer Series in Computational Mathematics},
edition =   {2nd},
volume =    {1}
}

@book{book:tomas,
title =     {Methods of Applied Mathematics for Engineers Scientists},
author =    {Co, Tomas B.},
publisher = {Cambridge University Press},
year =      {2013},
series =    {Michigan Technology University},
address = {New {Y}ork}
}

@book{book:hairer2,
title =     {Solving Ordinary Differential Equations II: Stiff and Differential-Algebraic Problems},
author =    {Hairer, Ernst and Wanner, Gerhard},
publisher = {Springer-Verlag},
year =      {1996},
address = {Berlin},
series =    {Springer Series in Computational Mathematics 14},
edition =   {Second Revised},
volume =    {}
}

@book{iserles_2008, 
address={Cambridge}, 
edition={2}, 
title={A First Course in the Numerical Analysis of Differential Equations}, 
publisher={Cambridge University Press},
author={Iserles, Arieh}, 
year={2008}, 
collection={Cambridge Texts in Applied Mathematics}
}

@article{Runge_1895,
author = {{R}unge, C.},
journal = {Math. Ann.},
pages = {167-178},
publisher = {National Academy of Sciences},
title = {Ueber die numerische {A}ufl\"{o}sung von Differentialgleichungen},
olume = {46},
year = {1895}
}

@article{butcher_1964, 
title={On {R}unge-{K}utta processes of high order}, 
volume={4}, 
DOI={10.1017/S1446788700023387}, 
number={2}, 
journal={Journal of the Australian Mathematical Society}, 
publisher={Cambridge University Press}, 
author={Butcher, John C.}, 
year={1964}, 
pages={179-194}}

@article{butcher_1963, 
title={Coefficients for the study of {R}unge-{K}utta integration processes},
volume={3},
number={2},
journal={Journal of Australian Mathematical Society},
publisher={Cambridge University Press},
author={Butcher, John C.},
year={1963},
pages={185–201}
}

@article{Butcher_1996,
title = {A history of {R}unge-{K}utta methods},
journal = {App. Num. Math.},
volume = {20},
number = {3},
pages = {247-260},
year = {1996},
issn = {0168-9274},
author = {Butcher, John C.}
}

@article{verwer_1996,
title = {Explicit {R}unge-{K}utta methods for parabolic {P}artial {D}ifferential {E}quations},
journal = {App. Num. Math.},
volume = {22},
number = {1},
pages = {359-379},
year = {1996},
issn = {0168-9274},
author = {Verwer, J.G.}
}

@book{hairer2002geometric,
title={Geometric Numerical Integration: Structure-Preserving algorithms for Ordinary Differential Equations},
author={Hairer, Ernst and Lubich, Christian and Wanner, Gerhard},
series={Springer series in computational mathematics},
year={2002},
publisher={Springer},
address = {Berlin},
edition = {2}
}

@article{casas_2021_complex,
title = {Compositions of pseudo-symmetric integrators with complex coefficients for the numerical integration of differential equations},
journal = { J. Comput. Appl. Math.},
volume = {381},
pages = {113006},
year = {2021},
issn = {0377-0427},
author = {Casas, Fernando and Chartier, Philippe and Escorihuela-Tomàs, Alejandro and Zhang, Yong}
}

@article{soderlind-06,
title = {Adaptive time-stepping and computational stability},
journal = { J. Comput. Appl. Math.},
volume = {185},
number = {2},
pages = {225-243},
year = {2006},
issn = {0377-0427},
author = {S\"{o}derlind, Gustaf and Wang, Lina}
}

@article{castella_2009,
title = {Splitting methods with complex times for parabolic equations},
journal = {BIT Numerical Mathematics},
volume = {49},
number = {},
pages = {487-508},
year = {2009},
author = {Castella, F. and Chartier, P. and Descombes, S. and Vilmart, G}
}

@article{Mc-1995,
author = {McLachlan, Robert I.},
title = {On the Numerical Integration of Ordinary Differential Equations by Symmetric Composition Methods},
journal = {SIAM Journal on Scientific Computing},
volume = {16},
number = {1},
pages = {151-168},
year = {1995},
doi = {10.1137/0916010}
}

@article{yoshida-1990,
title = {Construction of higher order symplectic integrators},
journal = {Physics Letters A},
volume = {150},
number = {5},
pages = {262-268},
year = {1990},
issn = {0375-9601},
doi = {10.1016/0375-9601(90)90092-3},
author = {Yoshida, Haruo}
}

@article{suzuki-1990,
title = {Fractal decomposition of exponential operators with applications to many-body theories and {M}onte {C}arlo simulations},
journal = {Physics Letters A},
volume = {146},
number = {6},
pages = {319-323},
year = {1990},
doi = {10.1016/0375-9601(90)90962-N},
author = {Suzuki, Masuo}
}

@article{casas-2008,
  title={Splitting and composition methods in the numerical integration of differential equations},
  author = {Blanes, Sergio and Casas, Fernando and Murua, Ander},
  journal={Bol. Scc. Esp. Mat. Apl.},
  pages={89-145},
  volume = {45},
  year={2008}
}

@article{casas-2006,
author = {Blanes, Sergio and Casas, Fernando and Murua, Ander},
title = {Composition Methods for Differential Equations with Processing},
journal = {SIAM Journal on Scientific Computing},
volume = {27},
number = {6},
pages = {1817-1843},
year = {2006},
doi = {10.1137/030601223}
}

@inproceedings{Hansen-69,
author = {Hansen, Eldon},
title = {Cyclic composite multistep predictor-corrector methods},
eventtitle = {ACM'69: Proceeding of the 1969 24th national conference},
pages = {135-139},
year = {1969},
month = {August},
doi = {10.1145/800195.805925}
}

@article{bickart-73,
author = {Bickart, Theodore and Picel, Zdenek},
title = {High order stiffly stable composite multistep methods for numerical integration of stiff differential equations},
journal = {BIT Numerical Mathematics},
volume = {13},
year = {1973},
pages = {272-286},
doi = {https://doi.org/10.1007/BF01951938}
}

@article{donelson-71,
author = {Donelson III, John and Hansen, Eldon},
title = {Cyclic Composite Multistep Predictor-Corrector Methods},
journal = {SIAM Journal on Numerical Analysis},
volume = {8},
number = {1},
pages = {137-157},
year = {1971},
doi = {10.1137/0708018}
}

@article{tendler-78,
author = {Tendler, Joel and Bickart, Theodore and Picel, Zdenek},
title = {A stiffly stable Integration Process Using Cyclic Composite Methods},
journal = {ACM Transactions on Mathematical Software},
volume = {4},
issue = {4},
pages = {339-368},
year = {1977},
doi = {10.1137/0708018}
}

@article{cash-77,
author = {Cash, J.R.},
title = {On a class of cyclic methods for the numerical integration of stiff systems of {O.D.E}.s},
journal = {BIT Numerical Mathematics},
volume = {17},
year = {1977},
pages = {270-280},
doi = {https://doi.org/10.1007/BF01932147}
}

@article{becker-98,
author = {Becker, J.},
title = {A second order {B}ackward {D}ifference method with variable steps for a parabolic problem},
journal = {BIT Numerical Mathematics},
volume = {38},
issue = {4},
year = {1998},
pages = {644-662},
doi = {https://doi.org/10.1007/BF02510406}
}

@article{bokanowski-21,
author = {Bokanowski, Olivier and Picarelli, Athena and Reisinger, Cristoph},
title = {Stability and convergence of second order {B}ackward {D}ifferentiation schemes for parabolic {H}amilton-{J}acobi-{B}ellman equations},
journal = {Numerische Mathematik},
volume = {148},
year = {2021},
pages = {187-222},
doi = {https://doi.org/10.1007/s00211-021-01202-x}
}

@article{wang-21,
author = {Wang, Wansheng and Mao, Mengli and Wang, Zheng},
title = {Stability and error estimates for the variable step-size {BDF}2 method for linear and semilinear parabolic equations},
journal = {Advances in Computational Mathematics},
volume = {47},
issue = {8},
year = {2021},
pages = {187-222},
doi = {https://doi.org/10.1007/s10444-020-09839-2}
}

@article{dahlquist-56,
author = {Dahlquist, Germund},
title = {Convergence and stability in the numerical integration of ordinary differential equations},
journal = {Mathematica Scandinavica},
volume = {4},
year = {1956},
pages = {33-53}
}

@article{gragg-64,
author = {Gragg, Williams and Stetter, Hans},
title = {Generalized Multistep Predictor-Corrector Methods},
journal = {Journal of ACM},
volume = {11},
issue = {2},
year = {1964},
pages = {188-209},
doi={https://doi.org/10.1145/321217.321223}
}

@article{blanes-01,
title = {High order numerical integrators for differential equations using composition and processing of low order methods},
journal = {Applied Numerical Mathematics},
volume = {37},
number = {3},
pages = {289-306},
year = {2001},
doi = {https://doi.org/10.1016/S0168-9274(00)00044-1},
author = {Blanes, Sergio}
}

@article{blanes-19,
title = {Splitting and composition methods with embedded error estimators},
journal = {Applied Numerical Mathematics},
volume = {146},
pages = {400-415},
year = {2019},
doi = {https://doi.org/10.1016/j.apnum.2019.07.022},
author = {Blanes, Sergio and Casas, Fernando and Thalhammer, Mechthild}
}

@article{blanes-10,
title = {Splitting methods with complex coefficients},
journal = {SeMA Journal: Bulletin of the Spanish Society of Applied Mathematics},
volume = {50},
pages = {47-60},
year = {2010},
doi = {https://doi.org/10.1007/BF03322541},
author = {Blanes, Sergio and Casas, Fernando and Murua, Ander}
}

@article{dahlquist-63,
title = {A special stability problem for linear multistep methods},
journal = {BIT Numerical Mathematics},
volume = {3},
pages = {27-43},
year = {1963},
doi = {https://doi.org/10.1007/BF01963532},
author = {Dahlquist, Germund G.},
}

@article{butcher-2009,
title = {General linear methods for ordinary differential equations},
journal = {Mathematics and Computers in Simulation},
volume = {79},
number = {6},
pages = {1834-1845},
year = {2009},
doi = {https://doi.org/10.1016/j.matcom.2007.02.006},
author = {Butcher, John C.}
}

@article{okounghae-12,
author = {Okounghae, R I and Ikhile, M N O},
title = {On the construction of high order {A}($\alpha$)-stable hybrid linear multistep methods for stiff {IVP}s in {ODE}s},
journal = {Numerical Analysis and Applications},
volume = {5},
pages = {231-241},
doi = {https://doi.org/10.1134/S1995423912030056}
}

@article{kirlinger-04,
title = {Linear multistep methods applied to stiff initial value problems--A survey},
journal = {Mathematical and Computer Modelling},
volume = {40},
number = {11},
pages = {1181-1192},
year = {2004},
doi = {https://doi.org/10.1016/j.mcm.2005.01.012},
author = {Kirlinger, G.}
}

@article{xiao-14,
title = {Two classes of implicit–explicit multistep methods for nonlinear stiff initial-value problems},
journal = {Applied Mathematics and Computation},
volume = {247},
pages = {47-60},
year = {2014},
doi = {https://doi.org/10.1016/j.amc.2014.08.066},
author = {Xiao, Aiguo and Zhang, Gengen and Yi, Xing}
}

@article{arkrivis-13,
title = {Implicit--explicit multistep methods for nonlinear parabolic equations},
journal = {Mathematics of Computation},
volume = {82},
issue={281},
pages = {45-68},
year = {2013},
doi = {10.1090/S0025-5718-2012-02628-7},
author = {Akrivis, Georgios}
}

@article{arkrivis-20,
author = {Akrivis, Georgios and Katsoprinakis, Emmanouil},
title = {An analogue to the  {A}$(\vartheta)$-stability concept for implicit-explicit {BDF} methods},
journal = {SIAM Journal on Numerical Analysis},
volume = {58},
number = {6},
pages = {3475-3503},
year = {2020},
doi = {10.1137/19M1275103}
}

@Inbook{Jay-15,
author={Jay, Laurent O.},
editor={Engquist, Bj{\"o}rn},
title={Lobatto Methods},
bookTitle={Encyclopedia of Applied and Computational Mathematics},
year={2015},
publisher={Springer},
address={Berlin, Heidelberg},
pages={817--826},
isbn={978-3-540-70529-1},
doi={10.1007/978-3-540-70529-1\_123},
}

@article{hairer-99,
title = {Stiff differential equations solved by {R}adau methods},
journal = {Journal of Computational and Applied Mathematics},
volume = {111},
number = {1},
pages = {93-111},
year = {1999},
issn = {0377-0427},
doi = {https://doi.org/10.1016/S0377-0427(99)00134-X},
author = {Hairer, Ernst and Wanner, Gerhard}
}

@article{radau-88,
title = {Étude sur les formules d'approximation qui servent à calculer la valeur numérique d'une intégrale définie},
journal = {Journal de Mathématiques Pures et Appliquées},
volume = {6},
number = {3},
pages = {283-336},
year = {1880},
issn = {0377-0427},
author = {Radau, Rodolphe}
}

@article{pia-17,
title = {An embedded formula of the {C}hebyshev collocation method for stiff problems},
journal = {Journal of Computational Physics},
volume = {351},
pages = {376-391},
year = {2017},
issn = {0021-9991},
doi = {https://doi.org/10.1016/j.jcp.2017.09.046},
author = {Piao, Xiangfan and Bu, Sunyoung and Kim, Dojin and Kim, Philsu}
}

@article{johnson-88,
author = {Johnson, Claes},
title = {Error Estimates and Adaptive Time-Step Control for a Class of One-Step Methods for Stiff Ordinary Differential Equations},
journal = {SIAM Journal on Numerical Analysis},
volume = {25},
number = {4},
pages = {908-926},
year = {1988},
doi = {10.1137/0725051}
}

@article{axelsson-69,
author = {Axelsson, O},
title = {A class of {A}-stable methods},
journal = {BIT Numerical Mathematics},
volume = {9},
pages = {185-199},
year = {1969},
doi = {10.1007/BF01946812}
}

@article{blanes-99,
author = {Blanes, Sergio and Casas, Fernando and Ros, J.},
title = {Symplectic Integration with Processing: {A} General Study},
journal = {SIAM Journal on Scientific Computing},
volume = {21},
number = {2},
pages = {711-727},
year = {1999},
doi = {10.1137/S1064827598332497}
}

@article{Vermeire-23,
title = {Embedded paired explicit {R}unge-{K}utta schemes},
journal = {Journal of Computational Physics},
volume = {487},
pages = {112-159},
year = {2023},
doi = {https://doi.org/10.1016/j.jcp.2023.112159},
author = {Vermeire, Brian C.}
}

@article{choi-96,
title = {Error estimates and adaptive time stepping for various direct time integration methods},
journal = {Computers \& Structures},
volume = {60},
number = {6},
pages = {923-944},
year = {1996},
doi = {10.1016/0045-7949(95)00452-1},
author = {Chang-Koon, Choi and Heung-Jin, Chung}
}

@article{jaradat-23,
title = {An Adaptive Time-Stepping Control Algorithm for Molten Salt Reactor Transient Analyses},
journal = {Annals of Nuclear Energy},
volume = {190},
pages = {109880},
year = {2023},
doi = {10.1016/j.anucene.2023.109880},
author = {Jaradat, Mustafa K. and {Sik Yang}, Won}
}

@article{soderlind-02,
title = {Automatic Control and Adaptive Time-Stepping},
journal = {Numerical Algorithms},
volume = {31},
pages = {281-310},
year = {2002},
doi = {10.1023/A:1021160023092},
author = {Söderlind, Gustaf}
}

@article{soderlind-21,
title = {Local error estimation and step size control in adaptive linear multistep methods},
journal = {Numerical Algorithms},
volume = {86},
pages = {537-563},
year = {2021},
doi = {10.1007/s11075-020-00900-1},
author = {Arévalo, Carmen and Söderlind, Gustaf and Hadjimichael Yiannis and Feket, Imre}
}

@article{meng-23,
title = {An adaptive {BDF}2 implicit time-stepping method for the no-slope-selection epitaxial thin film model},
journal = {Computational and Applied Mathematics},
volume = {42},
number= {124},
pages = {},
year = {2021},
doi = {10.1007/s40314-023-02250-9},
author = {Meng, Xiangjung and Zhang, Zhengru}
}

@article{baron-17,
title = {Adaptive multistep time discretization and linearization based on a posteriori error estimates for the {R}ichards equation},
journal = {Applied Numerical Mathematics},
volume = {112},
pages = {104-125},
year = {2017},
doi = {10.1016/j.apnum.2016.10.005},
author = {Baron, V. and Coudière, Y. and Sochala, P.}
}

@article{wang-23,
title = {An adaptive space and time method in partially explicit splitting scheme for multiscale flow problems},
journal = {Computers \& Mathematics with Applications},
volume = {144},
pages = {100-123},
year = {2023},
doi = {10.1016/j.camwa.2023.05.034},
author = {Wang , Yating and Leung, Wing Tat}
}

@article{yan-21,
title = {Adaptive time-stepping schemes for the solution of the {P}oisson-{N}ernst-{P}lanck equations},
journal = {Applied Numerical Mathematics},
volume = {163},
pages = {254-269},
year = {2021},
doi = {10.1016/j.apnum.2021.01.018},
author = {Yan, David and Pugh, M.C. and Dawson, F.P.}
}

@article{lambert-90,
title = {On the local error and the local truncation error of linear multistep methods},
journal = {BIT Numerical Mathematics},
volume = {30},
pages = {637-681},
year = {1990},
doi = {10.1007/BF01933215},
author = {Lambert, John Denholm}
}

@article{khashin-14,
title = {Estimating the error in the classical {R}unge-{K}utta methods},
journal = {Computational Mathematics and Mathematical Physics},
volume = {54},
pages = {767-774},
year = {2014},
doi = {10.1134/S0965542514050145},
author = {Khashin, S.I.}
}

@article{hochbruck-18,
title = {Error analysis of implicit {R}unge-{K}utta methods for quasilinear hyperbolic evolution equations},
journal = {Numerische Mathematik},
volume = {138},
pages = {557-579},
year = {2018},
doi = {10.1007/s00211-017-0914-6},
author = {Hochbruck, M. and Pazur, T. and Schnaubelt, R.}
}

@article{hochbruck2010exponential,
title={Exponential integrators},
author={Hochbruck, Marlis and Ostermann, Alexander},
journal={Acta Numerica},
volume={19},
pages={209--286},
year={2010},
publisher={Cambridge University Press}
}

@article{Pope_1963,
author = {Pope, David A.},
title = {An Exponential Method of Numerical Integration of Ordinary Differential Equations},
journal = {Commun. ACM},
issue_date = {Aug. 1963},
volume = {6},
number = {8},
year = {1963},
issn = {0001-0782},
pages = {491--493},
numpages = {3},
doi = {10.1145/366707.367592},
acmid = {367592},
publisher = {ACM},
address = {New {Y}ork}
}

@article{cox_2002,
title = {Exponential {T}ime {D}ifferencing for Stiff Systems},
journal = {Journal of Computational Physics},
volume = {176},
number = {2},
pages = {430 - 455},
year = {2002},
note = {},
issn = {0021-9991},
doi = {http://dx.doi.org/10.1006/jcph.2002.6995},
author = {Cox, S.M. and Matthews, P.C.}
}

@book{henrici-62,
author = {Henrici, Peter},
title = {Discrete Variable Methods in Ordinary Differential Equations},
publisher = {Wiley},
language = {English},
address = {New {Y}ork},
year = {1962}
}

@article{gander-20,
author = {Gander, M.J. and Wanner, G.},
title = {Exact {BDF} stability angles with {M}aple},
journal= {BIT Numerical Mathematics},
publisher = {Springer},
volume = {60},
pages = {615-617},
doi = {10.1007/s10543-019-00796-x},
language = {English},
year = {2020}
}

@article{ramiere-15,
title = {Iterative residual-based vector methods to accelerate fixed point iterations},
journal = {Computers \& Mathematics with Applications},
volume = {70},
number = {9},
pages = {2210-2226},
year = {2015},
issn = {0898-1221},
doi = {https://doi.org/10.1016/j.camwa.2015.08.025},
author = {Ramière, Isabelle and Helfer, Thomas}
}

@article{martinez-23,
title = {A Fixed-Point State observer with {S}teffensen-{A}itken accelerated convergence},
journal = {Journal of the Franklin Institute},
volume = {360},
number = {10},
pages = {6757-6782},
year = {2023},
issn = {0016-0032},
doi = {https://doi.org/10.1016/j.jfranklin.2023.04.023},
author = {Martinez-Guerra, Rafael and Flores-Flores, Juan Pablo}
}

@article{hairer-83,
author = {Hairer, E. and Wanner, G.},
title = {On the Instability of the {BDF} Formulas},
journal = {SIAM Journal on Numerical Analysis},
volume = {20},
number = {6},
pages = {1206-1209},
year = {1983},
doi = {10.1137/0720090}
}

@article{chen-19,
author = {Chen, Wenbin and Wang, Xiaoming and Yan, Yue and Zhang, Zhuying},
title = {A Second Order {BDF} Numerical Scheme with Variable Steps for the {C}ahn--{H}illiard Equation},
journal = {SIAM Journal on Numerical Analysis},
volume = {57},
number = {1},
pages = {495-525},
year = {2019},
doi = {10.1137/18M1206084}
}

@article{Liao-20,
author = {Liao, Hong-lin and Tang, Tao and Zhou, Tao},
title = {On Energy Stable, Maximum-Principle Preserving, Second-Order {BDF} Scheme with Variable Steps for the {A}llen--{C}ahn Equation},
journal = {SIAM Journal on Numerical Analysis},
volume = {58},
number = {4},
pages = {2294-2314},
year = {2020},
doi = {10.1137/19M1289157}
}

@article{Liao-21,
author = {Liao, Hong-lin and Zhang, Zhimin},
title = {Analysis of adaptive {BDF}2 scheme for diffusion equations},
journal = {Mathematics of Compution},
volume = {90},
pages = {1207-1226},
year = {2021},
doi = {10.1090/mcom/3585}
}

@article{Li-22,
author = {Li, Zhaoyi and Liao, Hong-lin},
title = {Stability of Variable-Step {BDF2} and {BDF3} Methods},
journal = {SIAM Journal on Numerical Analysis},
volume = {60},
number = {4},
pages = {2253-2272},
year = {2022},
doi = {10.1137/21M1462398}
}

@article{Liao-23,
author = {Liao , Hong-lin and Tang, Tao and Zhou , Tao},
title = {Discrete Energy Analysis of the Third-Order Variable-Step {BDF} Time-Stepping for Diffusion Equations},
journal = {Journal of Computational Mathematics},
year = {2023},
volume = {41},
number = {2},
pages = {325--344},
doi = {10.4208/jcm.2207-m2022-0020}
}

@article{Xu-23,
title = {The long time error estimates for the second order backward difference approximation to sub-diffusion equations with boundary time delay and feedback gain},
journal = {Mathematics and Computers in Simulation},
volume = {208},
pages = {186-206},
year = {2023},
issn = {0378-4754},
doi = {10.1016/j.matcom.2023.01.027},
author = {Da Xu}
}

@Article{CiCP-35-1327,
author = {Zhan, JiajunYang  and LeiDu, Rui and Cui , Zixuan},
title = {Towards Preserving Geometric Properties of Landau-Lifshitz-Gilbert Equation Using Multistep Methods},
journal = {Communications in Computational Physics},
year = {2024},
volume = {35},
number = {5},
pages = {1327--1351},
issn = {1991-7120},
doi = {https://doi.org/10.4208/cicp.OA-2023-0201}
}

@Article{CiCP-13-461,
author = {Ganesh, N. and Balakrishnan, N.},
title = {A h-Adaptive Algorithm Using Residual Error Estimates for Fluid Flows},
journal = {Communications in Computational Physics},
year = {2013},
volume = {13},
number = {2},
pages = {461--478},
issn = {1991-7120},
doi = {https://doi.org/10.4208/cicp.170811.210212a}
}

@article{Maset-21,
title = {Relative error analysis of matrix exponential approximations for numerical integration},
author = {Stefano Maset},
pages = {119--158},
volume = {29},
number = {2},
journal = {Journal of Numerical Mathematics},
doi = {10.1515/jnma-2020-0019},
year = {2021},
lastchecked = {2025-01-17}
}

@article{Vu-2024,
title = {Stability and error analysis of a semi-implicit scheme for incompressible flows with variable density and viscosity},
author = {An Vu and Loic Cappanera},
journal = {Journal of Numerical Mathematics},
doi = {10.1515/jnma-2024-0033},
year = {2024},
lastchecked = {2025-01-17}
}

@article{Kropielnicka-2024,
title = {Effective highly accurate time integrators for linear {K}lein--{G}ordon equations across the scales},
author = {Karolina Kropielnicka and Karolina Lademann and Katharina Schratz},
journal = {Journal of Numerical Mathematics},
doi = {10.1515/jnma-2023-0070},
year = {2024},
lastchecked = {2025-01-17}
}

@article{Diaz-2021,
title = {Convergence of time-splitting approximations for degenerate convection–diffusion equations with a random source},
author = {Roberto Díaz-Adame and Silvia Jerez},
pages = {23--38},
volume = {29},
number = {1},
journal = {Journal of Numerical Mathematics},
doi = {10.1515/jnma-2020-0012},
year = {2021},
lastchecked = {2025-01-17}
}

@article{comp1,
author = {Ayari, M. and Hammami, M.A. and Klai, Z.},
journal = {Computational and Applied Mathematics},
title = {Advanced state estimation methods with application to different classes of BMS},
volume = {45},
number = {87},
year = {2026},
doi = {10.1007/s40314-025-03575-3}
}

@article{comp2,
author = {Yong, Jinjun and Ye, Changlun and Luo, Xianbing and Sun, Shuyu},
title = {Improved error estimates of ensemble Monte Carlo methods for random transient heat equations with uncertain inputs},
journal = {Computational and Applied Mathematics},
volume = {44},
number = {58},
year = {2025},
doi = {10.1007/s40314-024-03022-9},
}

@article{comp3,
author = {Jaiswal, Aishwarya and Kumar, Shashikant and Kumar, Sunil},
title = {A priori and a posteriori error analysis for a system of singularly perturbed Volterra integro-differential equations},
journal = {Computational and Applied Mathematics},
volume = {42},
number = {278},
year = {2023},
doi = {10.1007/s40314-023-02406-7},
}

@article{comp4,
author = {Wang, Yifei and Huang, Jin and Deng, Ting and Li, Hu},
title = {An efficient numerical approach for solving variable-order fractional partial integro-differential equations},
journal = {Computational and Applied Mathematics},
volume = {41},
number = {411},
year = {2022},
doi = {10.1007/s40314-022-02131-7},
}

\end{document}